\begin{document}

\theoremstyle{plain}
\newtheorem{theorem}{Theorem}[section]
\newtheorem{proposition}{Proposition}[section]
\newtheorem{lemma}{Lemma}[section]
\newtheorem{corollary}{Corollary}[section]
\theoremstyle{definition}
\newtheorem{definition}{Definition}[section]
\theoremstyle{remark}
\newtheorem{remark}{Remark}[section]
\numberwithin{equation}{section}
\setlength{\parindent}{0pt}

\def\red{\color{red}}

\title{The dynamics of critical fluctuations in asymmetric Curie-Weiss models}

\author{Paolo Dai Pra \and Daniele Tovazzi}


\keywords{Interacting Particle Systems, Mean-field interaction, Averaging Principle}

\subjclass[2010]{60K35, 82C44}

\date{\today}

\maketitle

\begin{abstract}
We study the dynamics of fluctuations at the critical point for two time-asymmetric version of the Curie-Weiss model for spin systems that, in the macroscopic limit, undergo a Hopf bifurcation. The fluctuations around the macroscopic limit reflect the type of bifurcation, as they exhibit observables whose fluctuations evolve at different time scales. The limiting dynamics of fluctuations of slow observable is obtained via an averaging principle.

\end{abstract}

\section{Introduction}

Systems of many interacting particles exhibit peculiar behaviors as they get close to a phase transition. The phenomena occurring on this regime, referred to as {\em critical phenomena}, include long range correlations and large, non normal fluctuations. The interest in critical phenomena has been strongly stimulated by the celebrated article \cite{BTW}, where it is shown that certain interacting systems are {\em spontaneously} attracted by  their critical point ({\em self-organized criticality}). 

Either emerging from self-organization or from tuning model's parameters, critical phenomena are usually hard to treat at rigorous mathematical level; in part for this reason, considerable attention has been directed to mean-field models, whose tractability may allow to detect some universal features in criticality. In this paper we continue the analysis of critical fluctuations in mean-field dynamics. As first shown in \cite{D83, CE88}, reversible mean-field dynamics with ferromagnetic interaction have fluctuations at the critical point that are non-normal, and with an anomalous space-time scaling. The common features of the models considered is that the macroscopic dynamics, given by the {\em McKean-Vlasov equation}, exhibits a {\em pitchfork bifurcation} at the critical point: in particular, a single stable equilibrium bifurcates into two distinct locally stable equilibria, corresponding to the magnetized phases. The nature of the bifurcation actually matters, as the dynamics of fluctuations is related to the linearization of the McKean-Vasov equation. Results in the same spirit have been recently obtained in \cite{CerfGorny} for a class of models in which criticality is achieved by self-organization (see \cite{CerfGorny2} for related result in equilibium).

In \cite{CDP12} the effects of quenched disorder on critical fluctuations have been investigated in two specific examples: the Curie-Weiss model and the mean-field Kuramoto model. For the Curie-Weiss model, the disorder takes the form of a random, site-dependent magnetic field; it brakes space homogeneity of the system, but it maintains its time symmetry. The nature of the bifurcation is the same as in the homogeneous model; however the scale of critical fluctuations, as well as their distribution, drastically changes, as disorder's fluctuations become dominant. In the Kuramoto model the disorder is the random characteristic frequency of each rotator; this induces a preferential direction of rotation at microscopic level, which is clearly not invariant by time reversal. At macroscopic level this may change the nature of the bifurcation in the McKean-Vlasov equation. When the intensity of the disorder is small the bifurcation is still of pitchfork type, and the disorder turns out to have only moderate effects on the critical fluctuations. For larger intensity of the disorder the bifurcation changes nature, becoming of {\em Hopf} type: the emergence of stable periodic orbit is expected, although not fully rigorously proved (\cite{Bon1, Bon2}). Critical fluctuations have not been yet described in this regime.

In this paper we study critical fluctuations for two models, obtained by modifying the Curie-Weiss model, in which the bifurcation at the critical point is of Hopf type. In the first model the classical Curie-Weiss dynamics is modified by introducing dissipation, as proposed in \cite{DPFR13}; in the second we consider a two-population version of the Curie-Weiss model that has been studied in \cite{GC,CGM, CFT16}. In both examples the analysis leads to the study of the evolution of a two-dimensional order parameter. After a change of variables, we identify a slow and a fast variable: in the ``natural'' time scale, the fast variable averages out, producing a limiting dynamics for the slow variable via an averaging principle.

In Section 2 we formally introduce the two models and state our main results. Proofs are then given in Sections 3 and 4.

\section{Models and main results}
\subsection{The Curie-Weiss model with dissipation}\label{CW_dissipated_model_and_result}
\subsubsection{Description of the model}
Let $\mathcal{S}=\{-1,+1\}$ and $\underline{\sigma}=\left(\sigma_i\right)_{i=1}^N\in\mathcal{S}^N$ be a configuration of $N$ spins. We can define, at least at informal level, a stochastic process $\left(\underline{\sigma}(t)\right)_{t\in[0,T]}$ by assigning (besides an initial condition) the spin flip rates. Let us denote with $\underline{\sigma}^i$ the configuration obtained by $\underline{\sigma}$ by flipping the $i$-th spin, namely $$\sigma^i_k=\left\{ \begin{array}{cc}
\sigma_i, & i\not=k,\\-\sigma_i, & i=k.
\end{array}\right.$$
At a given time $t\in[0,T]$, if $\underline{\sigma}(t)=\underline{\sigma}$, each transition $\sigma_j\to-\sigma_j$ occurs with rate $1-\tanh(\sigma_j \lambda_N)$, where $\lambda_N$ is a stochastic process evolving according to the stochastic differential equation \begin{equation}
\label{lambda} d\lambda_N(t)=-\alpha\lambda_N(t)dt+\beta dm_N(t),
\end{equation} where $\alpha,\beta> 0$ and \begin{equation}
\label{magne} m_N(t)=\frac{1}{N}\sum_{j=1}^N \sigma_j(t). 
\end{equation}
Formally speaking, we are dealing with a Markov process $(\underline{\sigma}(t),\lambda_N(t))\in\mathcal{S}^N\times \mathbb{R}$ whose infinitesimal generator is \begin{equation}
\label{infgenCWdiss} L_Nf(\underline{\sigma},\lambda)=\sum_{i=1}^{N}\left[ \Big(1-\tanh(\sigma_i \lambda)\Big)\left(f\left(  \underline{\sigma}^i, \lambda-\frac{2\beta\sigma_i}{N} \right) -f(\underline{\sigma},\lambda)\right) \right] -\alpha\lambda f_\lambda(\underline{\sigma},\lambda).
\end{equation}
This is a simplified version of the model introduced in \cite{DPFR13}. The expression \eqref{infgenCWdiss} describes a system of mean field ferromagnetically coupled spins, in which the interaction energy is dissipated over time. The parameter $\beta$ represents the inverse temperature while $\alpha$ describes the intensity of dissipation in the interaction energy: notice that by setting $\alpha=0$ we would obtain a Glauber dynamics for the classical Curie-Weiss model. In the following, as initial condition we will take the spins $\{\sigma_i(0)\}_{i\in\mathbb{N}}$ as a family of i.i.d.  symmetric Bernoulli random variables and $\lambda_N(0)=\lambda_0\in \mathbb{R}$.
\subsubsection{Limiting Dynamics}
We want to study the dynamics of the process defined by $\eqref{infgenCWdiss}$ in the limit as $N\to\infty$ in a fixed time interval $[0,T]$. 
Consider the \textit{empirical measure} flow, for $t\in[0,T]$: $$\rho_N(t)=\frac{1}{N}\sum_{j=1}^N \delta_{\{\sigma_j(t),\lambda(t)\}}.$$
Given a measurable function $f:\mathcal{S}\times\mathbb{R}\to\mathbb{R}$, we are interested in the asymptotic behaviour of \emph{empirical averages} of the form $$\int_{\mathcal{S}\times\mathbb{R}}fd\rho_N(t)=\frac{1}{N}\sum_{j=1}^N f(\sigma_j(t), \lambda(t)). $$
By \textit{order parameter} we mean a stochastic process, defined as an empirical average, whose dynamics are Markovian: the dynamics of an order parameter completely describes the dynamics of the original system. In our case, we can find a two-dimensional order parameter (see Lemma \ref{orderparameter}) and its limiting dynamics is described by the following theorem, .

\begin{theorem}\label{limitingdyntheorem}For $t\in [0,T]$, the process $(m_N(t),\lambda_N(t))\in [-1,1]\times \mathbb{R}$, defined by \eqref{lambda} and \eqref{magne}, is an order parameter of the system.
As $N\to\infty$, $(m_N(t),\lambda_N(t))_{t\in[0,T]}$ converges, in sense of weak convergence of stochastic process, to a limiting deterministic process, solution of the system of ordinary differential equations \begin{equation}
\label{limitingdyn}  \begin{cases}  \dot{m}(t)\!\!\!\!\!\!\!\!\!\!\!\!&=2\left(\tanh(\lambda(t))-m(t)\right), \\ \dot{\lambda}(t)&=2\beta\left(   \tanh(\lambda(t))-m(t)\right)-\alpha\lambda(t), \end{cases}  
\end{equation} with initial conditions $m(0)=0, \: \lambda(0)=\lambda_0$.
\end{theorem}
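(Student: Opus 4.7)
The plan is to follow the standard generator/martingale route, exploiting the fact that $(m_N(t),\lambda_N(t))$ is itself a Markov process. First, I would verify the order-parameter claim by noting that a transition $\sigma_i \to -\sigma_i$ changes the pair by $(-2\sigma_i/N,-2\beta\sigma_i/N)$, while the rate $1-\tanh(\sigma_i\lambda)$ depends on the spin only through its sign. Aggregating over the two populations $N^{\pm} = N(1\pm m_N)/2$ produces transition rates for $(m_N,\lambda_N)$ that depend only on $(m_N,\lambda_N)$, so the pair is Markovian and Lemma~\ref{orderparameter} follows.

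Next I would compute $L_N$ on the coordinate functions. Using $\sigma_i\tanh(\sigma_i\lambda) = \tanh\lambda$ for $\sigma_i=\pm 1$, a direct calculation gives
\[
L_N m_N = 2(\tanh\lambda - m_N),\qquad L_N \lambda_N = 2\beta(\tanh\lambda - m_N) - \alpha\lambda,
\]
which are exactly the right-hand sides of \eqref{limitingdyn}. Dynkin's formula then produces the semimartingale decomposition
\[
m_N(t) = m_N(0) + \int_0^t 2(\tanh\lambda_N(s) - m_N(s))\,ds + M_N^{(1)}(t),
\]
and analogously for $\lambda_N(t)-\lambda_0$, with $M_N^{(1)},M_N^{(2)}$ martingales. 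Their quadratic variations satisfy $\langle M_N^{(i)}\rangle_T = O(1/N)$, since each jump contributes $O(N^{-2})$ to $\sum_i \mathrm{rate}_i (\Delta_i\phi)^2$ while the global jump rate is $O(N)$. Doob's inequality then yields $\sup_{t\le T}|M_N^{(i)}(t)|\to 0$ in $L^2$.

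To pass to the limit I need a priori boundedness on the non-compact $\lambda$-space. Since $|m_N|\le 1$ almost surely, solving \eqref{lambda} to obtain $\lambda_N(t) = \lambda_0 e^{-\alpha t} + \beta\int_0^t e^{-\alpha(t-s)}\,dm_N(s)$ and integrating by parts produces a deterministic bound $\sup_{t\le T}|\lambda_N(t)|\le |\lambda_0|+2\beta+\alpha\beta T$. On this compact set the drifts are Lipschitz and bounded, so Aldous's criterion gives tightness of $(m_N,\lambda_N)$ in $D([0,T],\mathbb{R}^2)$; the $O(1/N)$ jump size forces every limit point to be continuous. Letting $N\to\infty$ along a convergent subsequence in the semimartingale decomposition shows that any accumulation point satisfies \eqref{limitingdyn} with $m(0)=0$, $\lambda(0)=\lambda_0$, and uniqueness of the ODE (Lipschitz right-hand side on the invariant compact) identifies the whole-sequence limit.

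The only non-routine point is the a priori bound on $\lambda_N$: without it, neither the drifts nor the right-hand side of \eqref{limitingdyn} are globally Lipschitz, and tightness fails by standard criteria. Once that bound is extracted from $|m_N|\le 1$ and the exponential decay $-\alpha\lambda_N$ in \eqref{lambda}, the rest is a textbook Kurtz-type law of large numbers for density-dependent jump Markov processes augmented with a continuous drift.
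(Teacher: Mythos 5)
Your argument is correct. For the order-parameter claim you use exactly the paper's device (Lemma \ref{orderparameter}): grouping spins by sign into the populations of sizes $N(1\pm m_N)/2$ so that the jump rates of $(m_N,\lambda_N)$ depend only on $(m_N,\lambda_N)$; your generator computations $L_N m = 2(\tanh\lambda-m)$ and $L_N\lambda = 2\beta(\tanh\lambda-m)-\alpha\lambda$ are the correct evaluations of \eqref{infgenCWdiss} on the coordinate functions. For the convergence itself the paper gives no explicit proof (it treats the law of large numbers as standard and, where it does prove limit theorems, e.g.\ Theorem \ref{NormalFlucCWdiss}, it proceeds by uniform convergence of generators and Corollary 8.6, Chapter 4 of \cite{EK86}), whereas you take the semimartingale route: Dynkin decomposition, the $O(1/N)$ bound on the quadratic variation of the martingale parts, Aldous tightness, and identification of the limit with the unique solution of the ODE. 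The two routes are standard and essentially interchangeable here; the one genuine addition in your write-up is the a priori bound $\sup_{t\le T}|\lambda_N(t)|\le |\lambda_0|+2\beta+\alpha\beta T$ obtained from $|m_N|\le 1$ and the variation-of-constants formula for \eqref{lambda}, which is needed because the $\lambda$-coordinate lives on a non-compact space; this point is implicit but not addressed in the paper, and your handling of it is correct (and in fact the integration by parts gives the slightly sharper bound $|\lambda_0|+\beta(2+1-e^{-\alpha t})$). One small item worth making explicit: the initial condition $m(0)=0$ requires the law of large numbers for the i.i.d.\ symmetric Bernoulli initial spins, which you use tacitly.
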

\noindent
We briefly recall the analysis of the limiting system performed in Section 3 of \cite{DPFR13}:
 \eqref{limitingdyn} admits a unique stationary solution $(0,0)$ for any choice of the parameters $\alpha,\beta$. Anyway, for $\beta\leq {\alpha\over 2}+1$ the origin is a global attractor, while, for $\beta>{\alpha\over 2}+1$, $(0,0)$ loses its stability and system \eqref{limitingdyn} has a unique periodic orbit, which attracts all trajectories except the fixed point. In the critical case $\beta={\alpha\over 2}+1$, an Hopf bifurcation occurs.
\subsubsection{Normal Fluctuations}
Theorem \ref{limitingdyntheorem} configures as a Law of Large Number for the empirical measure flow $(\rho_N(t))_{t\in[0,T]}$, then it is natural to wonder whether a Central Limit Theorem holds as well. Let $(q_t)_{t\in[0,T]}$ denote the limiting dynamics of $(\rho_N(t))_{t\in[0,T]}$: in this paragraph we study the fluctuation flow $\tilde{\rho}_N(t)$, where
\begin{equation}
\label{fluctuationflowCWdiss} \tilde{\rho}_N(t)=N^{1\over 2} (\rho_N(t)-q_t)
\end{equation}
for any $t\in[0,T]$. Obviously, $(q_t)_{t\in[0,T]}$ is described by the solution of \eqref{limitingdyn} $(m(t),\lambda(t))_{t\in[0,T]}$, hence an order parameter for \eqref{fluctuationflowCWdiss} is given by the process $(\tilde{m}_N(t),\tilde{\lambda}_N(t))_{t\in[0,T]}$ where 
$$\tilde{m}_N(t)=N^{1\over 2}(m_N(t)-m(t)), \:\:\:\:\:\:\:\: \tilde{\lambda}_N(t)=N^{1\over 2}(\lambda_N(t)-\lambda(t)).$$
As one may expect, the order parameter converges to a Gaussian bi-dimensional process, as stated in the following theorem, 
that follows from standard {\em propagation of chaos} arguments.
\begin{theorem}\label{NormalFlucCWdiss}
For any $\alpha>0$, $\beta>0$ and $t\in [0,T]$ the process $(\tilde{m}_N(t),\tilde{\lambda}_N(t))$ converges, in sense of weak convergence of stochastic processes, as $N\to \infty$ to the Gaussian process $(\tilde{m}(t),\tilde{\lambda}(t))$, unique solution of the linear time-inhomogenous stochastic differential equation \begin{equation}\label{CLT}d\left(\begin{array}{c}
\tilde{m}(t)\\ \tilde{\lambda}(t)
\end{array}\right)=A(t)\left(\begin{array}{c}
\tilde{m}(t)\\ \tilde{\lambda}(t)
\end{array}\right)dt+\sqrt{1-m(t)\tanh(\lambda(t))}\left(\begin{array}{c}
2\\ 2\beta
\end{array}\right) dB(t)\end{equation}
with $$
A(t)=\left(\begin{array}{cc}
-2 & 2(1+\tanh(\lambda(t)) \\ -2\beta(1-\tanh(\lambda(t)) & 2\beta-\alpha
\end{array}\right),
$$
$B(t)$ one-dimensional standard Brownian Motion and $\tilde{m}(0)=0$, $\tilde{\lambda}(0)=0$.
\end{theorem}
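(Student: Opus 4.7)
The plan is to follow the standard martingale / propagation-of-chaos route, working directly with the semimartingale decompositions of $m_N$ and $\lambda_N$ that come from Dynkin's formula applied to the generator \eqref{infgenCWdiss}. Taking $f(\underline{\sigma},\lambda)=m$ and $f(\underline{\sigma},\lambda)=\lambda$, a direct calculation using $\sigma_i\tanh(\sigma_i\lambda)=\tanh(\lambda)$ gives
\begin{equation*}
L_N m_N=2(\tanh(\lambda_N)-m_N),\qquad L_N\lambda_N=2\beta(\tanh(\lambda_N)-m_N)-\alpha\lambda_N,
\end{equation*}
so that $m_N(t)-m_N(0)-\int_0^t L_N m_N(s)\,ds=:M_N^{m}(t)$ and the analogous object $M_N^{\lambda}(t)$ are martingales. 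This is the same computation that underlies Theorem \ref{limitingdyntheorem}, and the $(m,\lambda)$-dependence of the drift is all we need at this stage.

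Next I would subtract the limiting ODE \eqref{limitingdyn} and multiply by $\sqrt{N}$ to obtain a semimartingale equation for $(\tilde m_N,\tilde\lambda_N)$. The drift becomes, after a first-order Taylor expansion of $\tanh$ around $\lambda(t)$,
\begin{equation*}
\sqrt{N}\bigl[L_N m_N-\dot m(t)\bigr]=-2\tilde m_N(t)+2\,\mathrm{sech}^2(\lambda(t))\,\tilde\lambda_N(t)+R_N^{(1)}(t),
\end{equation*}
and similarly for the $\lambda$-component, the linear part producing the matrix $A(t)$ of the statement (modulo the equivalent algebraic form of its off-diagonal entries) and remainders $R_N^{(i)}(t)=O_{L^2}(\tilde\lambda_N^2/\sqrt N)$. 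For the martingale part I would compute the carré du champ: the only non-trivial jumps are spin flips, and a flip at site $i$ produces simultaneous increments $-2\sigma_i/N$ in $m_N$ and $-2\beta\sigma_i/N$ in $\lambda_N$. Summing the rates $1-\tanh(\sigma_i\lambda)$ over $i$ gives $N(1-m_N\tanh(\lambda_N))$, so that
\begin{equation*}
N\bigl\langle M_N^{m},M_N^{\lambda}\bigr\rangle'_t\longrightarrow 4\beta\bigl(1-m(t)\tanh(\lambda(t))\bigr),
\end{equation*}
and the full $2\times 2$ asymptotic covariance matrix equals $(1-m\tanh\lambda)\,(2,2\beta)^{T}(2,2\beta)$, which is rank one. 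This degeneracy is exactly what allows the martingale part to be represented in the limit by the single scalar Brownian motion $B(t)$ with vector coefficient $(2,2\beta)^{T}\sqrt{1-m(t)\tanh(\lambda(t))}$, as in \eqref{CLT}.

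It remains to pass to the limit. I would establish tightness of $(\tilde m_N,\tilde\lambda_N)$ in $D([0,T];\mathbb{R}^2)$ via Aldous' criterion, using the boundedness of $m_N\in[-1,1]$ and a Gronwall estimate to get a uniform-in-$N$ bound on $\mathbb{E}\sup_{t\le T}(\tilde m_N^2+\tilde\lambda_N^2)$; this same bound kills the Taylor remainders $R_N^{(i)}$. A martingale functional CLT (Rebolledo/Jacod–Shiryaev) applied to $\sqrt N(M_N^m,M_N^\lambda)$, whose jumps have size $O(1/\sqrt N)$ so that the Lindeberg condition is automatic, identifies the limiting martingale as a continuous Gaussian martingale with the covariance above. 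Any limit point of $(\tilde m_N,\tilde\lambda_N)$ therefore solves the linear time-inhomogeneous SDE \eqref{CLT}, which has a unique strong solution because the coefficients are deterministic and locally bounded along the smooth trajectory $(m(t),\lambda(t))$. The initial law is handled by the CLT for the i.i.d.\ Bernoulli spins at $t=0$.

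The main technical obstacle is the uniform second-moment control on $\tilde\lambda_N$ needed to discard the Taylor remainder: unlike $m_N$, the auxiliary variable $\lambda_N$ is unbounded and driven by $m_N$, so a priori $\tilde\lambda_N$ could blow up with $N$. I would handle this by coupling the Gronwall argument for $(\tilde m_N,\tilde\lambda_N)$ to the (already proved) fluid limit of $\lambda_N$, using the linear dissipation $-\alpha\lambda_N$ to absorb the stochastic forcing; this is the step where the detailed structure of the generator, rather than a black-box propagation-of-chaos reference, is really used.
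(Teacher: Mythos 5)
Your proposal is correct in substance but takes a genuinely different route from the paper. The paper argues at the level of generators: it computes the generator $\tilde I_{t,N}$ of the time-inhomogeneous Markov process $(\tilde m_N,\tilde\lambda_N)$ by applying $K_N$ from \eqref{geninfmagne} to $f\circ g_t$, performs the same Taylor expansions you describe \emph{inside} the generator, and then invokes Corollary 8.6, Chapter 4 of \cite{EK86}: uniform convergence of $\tilde I_{t,N}f$ to $I_tf$ on a suitable class of $f$, together with well-posedness of the limiting martingale problem, yields weak convergence in one stroke. That packages tightness and the identification of limit points into the cited theorem, so the paper never needs a separate Aldous argument nor an a priori moment bound on $\tilde\lambda_N$. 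Your route --- Dynkin decomposition, quadratic covariation of the jump martingales, Aldous tightness, martingale FCLT, identification of limit points --- is the classical hands-on alternative; its cost is exactly the uniform second-moment control on $\tilde\lambda_N$ that you single out, and your resolution is viable: the drift of $(\tilde m_N,\tilde\lambda_N)$ is globally Lipschitz in $(\tilde m_N,\tilde\lambda_N)$ uniformly in $N$ (since $|\tanh a-\tanh b|\le|a-b|$), the martingale parts have predictable quadratic variation of order $t$ uniformly in $N$, and Doob plus Gronwall then bound $E\sup_{t\le T}(\tilde m_N^2+\tilde\lambda_N^2)$, which also disposes of the Taylor remainders. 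Your observation that the limiting covariance matrix is rank one, so that a single scalar Brownian motion suffices, matches the structure of \eqref{CLT}.

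One point you should not gloss over: your linearized drift entry $2\,\mathrm{sech}^2(\lambda(t))=2(1-\tanh^2(\lambda(t)))$ is the honest Jacobian entry $\partial_\lambda[2(\tanh\lambda-m)]$, and it is \emph{not} algebraically equal to the entry $2(1+\tanh(\lambda(t)))$ displayed in the statement; likewise your $-2\beta$ versus the displayed $-2\beta(1-\tanh(\lambda(t)))$, and your $2\beta\,\mathrm{sech}^2(\lambda(t))-\alpha$ versus $2\beta-\alpha$. The two matrices coincide only where $\lambda(t)=0$, i.e.\ along the stationary solution that is the one actually used in the critical-fluctuation analysis. Since the generator of $(\tilde m_N,\tilde\lambda_N)$ determines the limiting drift uniquely, the correct $A(t)$ must be the Jacobian $DF(m(t),\lambda(t))$ you computed; the displayed matrix (note its unbalanced parentheses) appears to be a misprint. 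So do not call the two forms ``equivalent'': either state that you are proving the theorem with $A(t)=DF(m(t),\lambda(t))$, or restrict to the stationary initial datum where the discrepancy vanishes. With that caveat, your argument is sound.
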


\subsubsection{Dynamics of critical fluctuations}
The result of Theorem \ref{NormalFlucCWdiss} holds for any regime, but our main goal is to study more closely the long-time behaviour of fluctuations at the critical point, since typically they display some peculiar features (see \cite{CDP12}, \cite{CE88} and \cite{D83}). From now on, we will always take the parameters $\alpha$ and $\beta$ in such a way $\beta=\frac{\alpha}{2}+1$.\\
Let us consider the critical fluctuation flow for $t\in[0,T]:$
$$
\hat{\rho}(t)=N^{1\over 4}(\rho_N(N^{1\over 2})-q^*_0),
$$ 
where $q^*_0$ denotes the stationary solution correspondent to $(0,0)$, the equilibrium point of \eqref{limitingdyn}. In this way, we are assuming that the process starts in local equilibrium, which simplifies the proof of our result, but it should not be difficult to extend it to a general initial condition. Notice also that we are employing the usual space-time scaling involved in critical fluctuations. The flow $(\hat{\rho}_N(t))_{t\in[0,T]}$ can be fully described by the order parameter:
 \begin{equation}\label{wz} \hat{m}_N(t)=N^{1\over 4}m_N(N^{1\over 2}t), \:\:\:\:\:\: \hat{\lambda}_N(t)=N^{1\over 4}\lambda_N(N^{1\over 2}t).\end{equation} After having performed the change of variable (see Subsection \ref{pre}) \begin{equation}
\label{u} 
\begin{cases} z_N(t)=\hat{\lambda}_N(t),\\ u_N(t)=\frac{\beta \hat{m}_N(t)-\hat{\lambda}_N(t)}{\sqrt{\beta-1}},\end{cases}
\end{equation} consider the process $\kappa_N(t)=z_N^2(t)+u_N^2(t)$. We shall also consider the following assumption on $\lambda_N(0)$: \\
\newline
(\textbf{H1})  $N^{1 \over 4} \lambda_N(0) \rightarrow \bar{\lambda} \in \mathbb{R} \setminus \{0\}$ in probability, as $N \rightarrow +\infty$.
\newline
\begin{remark}\label{remark_on_initial}
 The condition $\bar{\lambda} \neq 0$ in Hypothesis (\textbf{H1}) is of pure technical nature: the change of variable in the proof of Theorem \ref{mainthm} is singular in the origin, so we require the process involved does not start in the origin. We believe this assumption could be avoided by approximation arguments that we have not succeed to complete.
%
\end{remark}

\begin{theorem} \label{mainthm}
If (\textbf{H1}) holds, for $t\in[0,T]$, as $N\to\infty$, the process $\kappa_N(t)$ converges, in sense of weak convergence of stochastic processes, to the unique solution of the stochastic differential equation \begin{equation}
\label{eqlimite} d\kappa(t)=\left(4\beta^2-\frac{\beta}{2}\kappa^2(t)\right)dt+2\beta\sqrt{2\kappa(t)}dB(t)
\end{equation} with initial condition $\kappa(0)={\beta \over \beta-1}\bar{\lambda}^2$.
\end{theorem}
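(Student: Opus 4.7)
The strategy is the standard one for a stochastic Hopf bifurcation. The linearization of the McKean--Vlasov flow \eqref{limitingdyn} at the critical point has purely imaginary eigenvalues $\pm 2i\sqrt{\beta-1}$, so after the space--time scaling in \eqref{wz} the leading order of the dynamics is a fast rotation of angular velocity $\omega_N = 2\sqrt{\beta-1}\,N^{1/2}$. The change of coordinates \eqref{u} puts this rotation in real canonical form, and the squared norm $\kappa_N = z_N^2 + u_N^2$, being invariant under the rotation, is the natural slow variable to which an averaging principle applies.

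Starting from the generator \eqref{infgenCWdiss}, Taylor-expanded in the critical scaling and with $\beta - \alpha/2 = 1$, I would write the Doob--Meyer decomposition of $(z_N,u_N)$. The $O(N^{1/2})$ drifts are exactly the rotation $(-\omega_N u_N,\omega_N z_N)$; the next-order drifts are $-\tfrac{2\beta}{3}z_N^3$ for $z_N$ and $0$ for $u_N$; and the quadratic covariations satisfy $d\langle M^z_N\rangle_t = 4\beta^2\,dt + o(1)\,dt$, $d\langle M^u_N\rangle_t = o(1)\,dt$, $d\langle M^z_N,M^u_N\rangle_t = o(1)\,dt$. The degeneracies on $u_N$ are explained by the fact that each spin flip changes $\hat\lambda_N$ by exactly $\beta$ times its change in $\hat m_N$, so $u_N$ has no noise from the jump part of the dynamics. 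Applying It\^o's formula to $\kappa_N$, the rotational drifts cancel because they are orthogonal to $(z_N,u_N)$, and one finds
\[
\kappa_N(t) = \kappa_N(0) + \int_0^t\!\bigl[4\beta^2 - \tfrac{4\beta}{3}z_N^4(s)\bigr]ds + M^\kappa_N(t) + o(1),\qquad d\langle M^\kappa_N\rangle_t = 16\beta^2\,z_N^2(t)\,dt + o(1)\,dt.
\]
The moment bound $\sup_N\mathbb{E}[\sup_{t\le T}\kappa_N(t)]<\infty$ follows from the drift bound $\le 4\beta^2$ and the quadratic-variation bound $\le 16\beta^2\kappa_N$; combined with the Aldous--Kurtz criterion, this gives tightness of $\{\kappa_N\}$ in $D([0,T],\mathbb{R}_+)$.

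The crux is then the averaging of the fast rotation: in the limit one must replace $z_N^4$ by its circular average $\tfrac38\kappa_N^2$ and $z_N^2$ by $\tfrac12\kappa_N$. I would implement this via a perturbed test-function argument. For $\phi\in C^2_b(\mathbb{R}_+)$, in polar coordinates $(r,\theta)$ with $r=\sqrt{\kappa}$ the fast generator acts as $\omega_N\partial_\theta$, and since the zero-mean-in-$\theta$ parts of $r^4\cos^4\theta$ and $r^2\cos^2\theta$ have explicit bounded trigonometric antiderivatives, one can construct a corrector $\psi_N = O(N^{-1/2})$ such that the full generator applied to $\phi(\kappa_N) + \psi_N$ equals $(\mathcal L\phi)(\kappa_N) + o(1)$, where $\mathcal L$ is the generator of \eqref{eqlimite}. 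Any weak limit of $\kappa_N$ then satisfies the martingale problem associated with \eqref{eqlimite}, and uniqueness of that martingale problem holds by Yamada--Watanabe: the diffusion $2\beta\sqrt{2\kappa}$ is $1/2$-H\"older, the drift is locally Lipschitz on $(0,\infty)$, the value of the drift at $0$ is $4\beta^2>0$ so that $\kappa=0$ is an entrance boundary, and the negative quadratic term $-\tfrac{\beta}{2}\kappa^2$ prevents explosion at infinity.

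The principal obstacle is precisely this averaging step, which is also why Hypothesis (\textbf{H1}) requires $\bar\lambda\neq 0$. The polar coordinates used to define the corrector are singular at $\kappa=0$, so both $\psi_N$ and the angular dynamics become ill-behaved when $\kappa_N$ is close to the origin. The condition $\bar\lambda\neq 0$ ensures $\kappa_N(0)\to\tfrac{\beta}{\beta-1}\bar\lambda^2>0$ in probability, but one still has to show, via a stopping-time localization and a Lyapunov argument (on $1/\kappa_N$ or $\log(1/\kappa_N)$) that exploits the strict positivity of the effective drift at $0$, that $\kappa_N$ stays bounded away from $0$ on $[0,T]$ with high probability uniformly in $N$, long enough for the averaging bound to be applicable before removing the localization.
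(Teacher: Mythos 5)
Your identification of the fast rotation, the role of the change of variables \eqref{u}, the continuity of $u_N$ (each jump changes $\hat\lambda_N$ by exactly $\beta$ times the change in $\hat m_N$), and all the coefficients (drift $4\beta^2-\tfrac{4\beta}{3}z_N^4$, quadratic variation $16\beta^2 z_N^2$, circular averages $\tfrac38\kappa^2$ and $\tfrac12\kappa$) agree with the paper. Where you diverge is in the two technical devices. For the averaging, you propose a perturbed-test-function corrector $\psi_N=O(N^{-1/2})$; the paper instead proves an elementary Riemann-sum averaging lemma (Proposition \ref{averagingproposition}) applied after a Skorohod representation, whose hypothesis is a uniform bound $\sup_{0\le h\le h'}E|\eta_N(t+h)-\eta_N(t)|\le C\sqrt{h'}$ on the slow part $\eta_N(t)=\theta_N(t)-2\sqrt{\beta-1}N^{1/2}t$ of the phase (Proposition \ref{etavera}). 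Both routes are viable and both require the same localization $\kappa_N\in\,]r,R[$, since the corrector and the angular increments are controlled by $1/r$.

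The genuine weak point is your de-localization near the origin. You correctly flag it as the principal obstacle, but the fix you sketch --- a Lyapunov bound on $1/\kappa_N$ or $\log(1/\kappa_N)$ \emph{uniform in $N$} --- is circular as stated: applying the pre-limit generator to $-\log\kappa$ produces the term $(8\beta^2\cos^2\theta_N-4\beta^2)/\kappa_N$, which is unbounded near $\kappa=0$ and only cancels \emph{after} averaging in $\theta$, while the averaging machinery is exactly what breaks down near $\kappa=0$. The ``strict positivity of the effective drift at $0$'' is a property of the averaged dynamics and cannot be invoked before the averaging has been justified there. The paper avoids this by never proving a uniform-in-$N$ lower bound on $\kappa_N$: it first identifies the weak limit of the $(r,R)$-stopped processes as the stopped limiting diffusion (Propositions \ref{stopmart} and \ref{killedprocess}), then shows that the \emph{limit} SDE satisfies $P(\tau_{r,R}\le T)\to0$ as $r\to0$, $R\to\infty$ (Lemma \ref{existenceanduni}, by representing $\kappa$ as $2\beta^2(X^2+Y^2)$ for a nondegenerate planar diffusion, which a.s. never visits the origin), and finally transfers this back to $\limsup_N P(\tau^N_{r,R}\le T)\le P(\tau_{r,R}\le T)$ by a Portmanteau argument in the uniform topology (Proposition \ref{proposizionefinale}). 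If you want to complete your proof, you should replace your Lyapunov step by this ``converge first on the stopped scale, then let the limit control the exit probabilities'' scheme; the rest of your outline, including the Yamada--Watanabe uniqueness for \eqref{eqlimite}, then goes through.
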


\subsection{The two-population Curie-Weiss model and its critical dynamics}
Let's now briefly analyse a different spin-flip system whose limiting dynamics also presents a Hopf bifurcation: we will study the critical fluctuations and we will see that they belong to the same class of universality given by Theorem \ref{mainthm}.\\

Let $\mathcal{S}=\{-1,+1\}$ and $\underline{\sigma}=(\sigma_i)_{i=1}^N\in\mathcal{S}^N$ as above, but now we divide the spin population in two disjoint groups, $I_1$ and $I_2$, such that $|I_1|=N_1$, $|I_2|=N_2$ and $N_1+N_2=N$. Let $\gamma$ denote the proportion of particles belonging to the first group, namely $\gamma:=N_1/N$. Interaction between particles depends on the population they belong to: we have two \textit{intra-group} interactions, tuning how strongly sites in the same group feel each other and controlled by the parameters $J_{11}$ and $J_{22}$, and two \textit{inter-group} interactions, giving the magnitude of the influence between particles of distinct populations, controlled by the parameters $J_{12}$ and $J_{21}$. For any configuration of the system, we define the quantities 
\begin{equation}
\label{doublemagnetization}
m_{1,N}={1\over N}\sum_{j\in I_1} \sigma_j, \:\:\:\:\:\:\:\:\:\:\:\: m_{2,N}={1\over N}\sum_{j\in I_2} \sigma_j.
\end{equation}
Let us also introduce the following functions:
\begin{eqnarray}
\label{R1R2} \mathcal{R}_1(x,y)=J_{11}x+J_{12}y,\:\:\:\:\:\:\:\:\:\:\:\: \mathcal{R}_2(x,y)=J_{21}x+J_{22}y.
\end{eqnarray}
Now we are ready to assign the spin flip rate for the process $(\underline{\sigma}(t))_{t\in[0,T]}$: the transition $\underline{\sigma}\to\underline{\sigma}^i$ occurs at rate 
\begin{equation} \label{Rates}
\begin{cases}
e^{-\sigma_i \mathcal{R}_1(m_{1,N}(t),m_{2,N}(t))}, \:\:\:\:\:\: \:\:\:\:\:\: \textrm{if} \:\: i\in I_1,\\ 
e^{-\sigma_i  \mathcal{R}_2(m_{1,N}(t),m_{2,N}(t))}, \:\:\:\:\:\: \:\:\:\:\:\: \textrm{if} \:\: i\in I_2.
\end{cases}
\end{equation}
According to \eqref{Rates}, we are dealing with a Markov process $\underline{\sigma}(t)\in \mathcal{S}^N$ whose infinitesimal generator is 
\begin{equation}
\label{infgen2pop} L_N f(\underline{\sigma})= \left(\sum_{i\in I_1} e^{-\sigma_i \mathcal{R}_1(m_{1,N},m_{2,N})}+\sum_{i\in I_2} e^{-\sigma_i \mathcal{R}_2(m_{1,N},m_{2,N})}\right) \nabla^\sigma_i f(\underline{\sigma}),
\end{equation}
where $\nabla^\sigma_i f(\underline{\sigma})= f(\underline{\sigma}^i)-f(\underline{\sigma})$.
\begin{theorem} For $t\in[0,T]$, the process $(m_{1,N}(t),m_{2,N}(t))\in [-1,1]^2$, defined by the expression \eqref{doublemagnetization}, is an order parameter of the system.
As $N\to+\infty$ in such a way the proportion $\gamma$ remains constant, the process $(m_{1,N}(t),m_{2,N}(t))_{t\in[0,T]}$ converges, in sense of weak convergence of stochastic processes, to a limiting deterministic process, solution of the system of ordinary differential equations 
\begin{equation}
\label{limitingdyn2pop}
\begin{cases}
\dot{m_1}(t)=2\gamma \sinh \Big(\mathcal{R}_1(m_1(t),m_2(t)\Big) - 2 m_1(t) \cosh\Big(\mathcal{R}_1(m_1(t),m_2(t)\Big),\\
\dot{m_2}(t)=2(1-\gamma) \sinh \Big(\mathcal{R}_2(m_1(t),m_2(t)\Big) - 2 m_2(t) \cosh\Big(\mathcal{R}_2(m_1(t),m_2(t)\Big)
\end{cases}
\end{equation}
\end{theorem}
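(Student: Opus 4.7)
The plan is to apply the standard generator/martingale-problem approach for hydrodynamic limits of mean-field spin systems, in complete parallel with the proof of Theorem \ref{limitingdyntheorem}. The decisive feature is that the flip rates in \eqref{Rates} depend on the full configuration only through the pair $(m_{1,N}, m_{2,N})$, which makes the evolution of this pair autonomous.

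First, I verify the order-parameter (Markov) property. Given $f \in C^1([-1,1]^2)$, I group the spins in $I_k$ by sign: there are $(N\gamma_k + N m_{k,N})/2$ up-spins and $(N\gamma_k - N m_{k,N})/2$ down-spins in $I_k$, where I set $\gamma_1 = \gamma$ and $\gamma_2 = 1-\gamma$. Since flipping $\sigma_i$ with $i \in I_k$ changes $m_{k,N}$ by $-2\sigma_i/N$ and leaves the other coordinate untouched, the expression for $L_N f(m_{1,N}, m_{2,N})$ coming from \eqref{infgen2pop} collapses into four terms (two per population) indexed only by $(m_{1,N}, m_{2,N})$, which establishes the Markov property.

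Specializing this computation to the coordinate functions (for which the underlying Taylor expansion is exact) yields the identity
\[
L_N m_{k,N} = 2\gamma_k \sinh \mathcal{R}_k(m_{1,N}, m_{2,N}) - 2\,m_{k,N} \cosh \mathcal{R}_k(m_{1,N}, m_{2,N}), \qquad k = 1,2,
\]
whose right-hand side is precisely the drift appearing in \eqref{limitingdyn2pop}. Dynkin's formula then gives
\[
m_{k,N}(t) = m_{k,N}(0) + \int_0^t L_N m_{k,N}(s)\,ds + M_{k,N}(t),
\]
where $M_{k,N}$ is a martingale whose predictable quadratic variation is of order $1/N$ uniformly in $t \in [0,T]$: jumps have size $2/N$, there are $O(N)$ of them per unit time, and the rates $e^{\pm \mathcal{R}_k}$ are uniformly bounded on the compact set $[-1,1]^2$. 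By Doob's inequality, $\sup_{t\le T} |M_{k,N}(t)| \to 0$ in $L^2$.

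The convergence then follows by textbook arguments. The pair $(m_{1,N}, m_{2,N})$ takes values in $[-1,1]^2$ and the drift above is globally Lipschitz on that set; tightness in $D([0,T], [-1,1]^2)$ follows from Aldous's criterion using the boundedness of the drift and the martingale estimate. Any weak limit point is almost surely continuous and solves the integral form of \eqref{limitingdyn2pop} with initial condition $(m_{1,N}(0), m_{2,N}(0))$, and Lipschitz continuity of the vector field yields uniqueness, so the whole sequence converges weakly to this unique deterministic solution. I do not anticipate any genuine obstacle: being a law of large numbers, this statement is much easier than the critical-fluctuation Theorem \ref{mainthm}, and the only mild nuisance is the extra bookkeeping for two populations, as the computation for each population is structurally identical to the one for $m_N$ in Theorem \ref{limitingdyntheorem}.
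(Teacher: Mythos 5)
Your proposal is correct and follows exactly the route the paper takes (implicitly) for this law of large numbers: the order-parameter property via the generator computation of Lemma \ref{cambiovariabilegeninf} in the style of Lemma \ref{orderparameter}, the exact evaluation of $L_N$ on the coordinate functions giving the drift of \eqref{limitingdyn2pop}, and the standard vanishing-martingale/tightness/uniqueness argument. The paper does not spell out a proof of this theorem, but your computation of the counts $(N\gamma_k \pm N m_{k,N})/2$ and of $L_N m_{k,N}=2\gamma_k\sinh\mathcal{R}_k-2m_{k,N}\cosh\mathcal{R}_k$ is exactly right, and the remaining steps are routine.
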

As pointed out in \cite{CFT16}, the parameters $\gamma, J_{11}, J_{12}, J_{21}, J_{22}$ can be adjusted to create an Hopf bifurcation at the origin by imposing that
\begin{eqnarray}
\label{condizionecritica1} \gamma J_{11} -1 = -((1-\gamma)J_{22}-1),\\
\label{condizionecritica2} \Gamma:=(\gamma J_{11}-1)^2 + \gamma (1-\gamma) J_{12}J_{21} <0,
\end{eqnarray}
In this case, the matrix obtained linearizing \eqref{limitingdyn2pop} around $(0,0)$ will be 
$$
A_{cr}=\left( \begin{array}{cc}
2(\gamma J_{11} -1) & 2\gamma J_{12} \\ 2(1-\gamma) J_{21} & -2(\gamma J_{11} -1)
\end{array}   \right) 
$$
and its eigenvalues are the purely imaginary  numbers $\lambda_{1,2}=\pm 2 i \sqrt{|\Gamma|}$.\\
Notice that a result concerning standard fluctuations similar to Theorem \ref{NormalFlucCWdiss} can be stated but we   focus on the critical fluctuations of the process $(m_{1,N}(t),m_{2,N}(t))$ when \eqref{condizionecritica1} and \eqref{condizionecritica2} hold, hence in presence of a Hopf bifurcation. The critical fluctuation flow is  described by the process 
\begin{equation}
\label{XY2pop}  x_N(t)= N^{1\over 4} m_{1,N}(N^{1\over 2}t), \:\:\:\:\:\:\:\:\:\: y_N(t)=N^{1\over 4} m_{2,N}(N^{1\over 2}t).
\end{equation}
Consider the change of variables \begin{equation}\label{WV2pop}
w_N(t)= {y_N(t)\over (1-\gamma)J_{21}}, \:\:\:\:\:\:\:\:\:\: v_N(t)= {1\over \sqrt{|\Gamma|}}\left(-x_N(t) + {(\gamma J_{11}-1)\over (1-\gamma)J_{21}} y_N(t)\right).
\end{equation}
and define $$\kappa_N(t)=w_N(t)^2+v_N(t)^2. $$ Similarly to the case with dissipation, our technique to prove the convergence for the process $(\kappa_N(t))_{t\in[0,T]}$ requires to fix initial conditions such that 
\begin{equation} \label{condizioniiniziali2pop} (m_{1,N}(0),m_{2,N}(0))\overset{w}{\underset{N\to+\infty}{\longrightarrow}} (0,0), \:\:\:\:\:\:\:\: \kappa_N(0)\overset{w}{\underset{N\to+\infty}{\longrightarrow}} \bar{\kappa}\end{equation} with $\bar{\kappa}>0$. In this case, to obtain initial conditions which verify \eqref{condizioniiniziali2pop}, one can take a small asymmetry in the initial distribution for the spins.  For simplicity, 
we introduce this small asymmetry only in one family of spins. As noticed in Remark \ref{remark_on_initial}, we believe this asymmetry could be avoided.
\newline
(\textbf{H2})  the initial spins $\{\sigma_i(0)\}_{i=1,\dots,N}$ constitute a family of independent random variable with the distributions satisfying the following conditions: \begin{itemize}
\item if $i\in I_1$, then 
\[ 
\lim_{N \rightarrow +\infty} N^{1 \over 4}\left[ P(\sigma_i(0)=+1)- {1\over 2} \right] = \epsilon \neq 0;\]
\item if $i\in I_2$, then
\[ 
\lim_{N \rightarrow +\infty} N^{1 \over 4}\left[ P(\sigma_i(0)=+1)- {1\over 2} \right] = 0;\]
\end{itemize}
If (\textbf{H2}) holds, then $x_N(0)\to 2\epsilon \gamma$ and $y_N(0)\to 0$ in distribution, hence $\kappa_N(0)$ weakly converges to $4 \epsilon^2 \gamma^2|\Gamma|^{-1}$.\\
%

\begin{theorem} \label{maintheorem2pop}
Assume (\textbf{H2}) holds. Take $\gamma\in]0,1[$, $(J_{11},J_{12},J_{21},J_{22})\in \mathbb{R}^4$ such that \eqref{condizionecritica1}-\eqref{condizionecritica2} are verified and such that $Z_2(\gamma,J_{11},J_{12},J_{21})<0$, where
\begin{align*}
Z_2(\gamma,J_{11},J_{12},J_{21})&=-2 J_{11}^2 |\Gamma| - 2 J_{21}^2 + (\gamma J_{11}-1)|\Gamma|(J_{11}^2-J_{21}^2) + (\gamma J_{11} -1) J_{21}^2 \\
&+ (\gamma J_{11}-1) (J_{11}( \gamma J_{11}-1) + (1-\gamma) J_{12} J_{21} )^2 +\\&- J_{11}(\gamma J_{11}-1)(J_{11}( \gamma J_{11}-1) + (1-\gamma) J_{12} J_{21} ) .
\end{align*}
Then, for $t\in[0,T]$, as $N\to+\infty$ in such a way $\gamma$ remains constant, the process $\kappa_N(t)$ converges, in sense of weak convergence of stochastic processes, to the unique solution of the stochastic differential equation
\begin{equation}
\label{limitfluttuazioni2pop} d\kappa(t)= \left(4Z_1(\gamma,J_{11},J_{12},J_{21})+{1\over 4} Z_2(\gamma,J_{11},J_{12},J_{21})\kappa^2(t)\right)dt+2\sqrt{2Z_1(\gamma,J_{11},J_{12},J_{21})\kappa(t)}dB(t)
\end{equation}
 with 
 $$
 Z_{1}(\gamma, J_{11}, J_{12}, J_{21})= {|\Gamma|+\gamma(1-\gamma)J^2_{21} + (\gamma J_{11}-1)^2\over (1-\gamma)J^2_{21}|\Gamma|}
 $$
 and $\kappa(0)=4 \epsilon^2 \gamma^2|\Gamma|^{-1}$.
\end{theorem}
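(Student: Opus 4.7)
The plan is to mimic the strategy used for Theorem \ref{mainthm}, replacing the Curie-Weiss-with-dissipation generator by the two-population one. Starting from \eqref{infgen2pop}, I would first compute the generator $\mathcal{L}_N$ of the rescaled process $(x_N(t),y_N(t))$ defined in \eqref{XY2pop}. Writing $m_{j,N}=N^{-1/4}\cdot$(rescaled magnetization), expanding the exponential rates $e^{-\sigma_i\mathcal{R}_j(m_{1,N},m_{2,N})}$ in $N^{-1/4}$, and Taylor-expanding the increments of a smooth test function $f(x,y)$ up to fourth order (note $\Delta x,\Delta y=O(N^{-3/4})$ for a single spin flip in the scaled variables), one obtains an expansion of the form
\[
\mathcal{L}_N f = N^{1/2}\mathcal{L}_0 f + \mathcal{L}_1 f + o(1),
\]
where $\mathcal{L}_0$ is the first-order operator associated with the linearization matrix $A_{cr}$, and $\mathcal{L}_1$ collects the cubic nonlinear drift together with the $O(1)$ second-order diffusion contribution. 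Applying the linear change of variables \eqref{WV2pop}, which is tailored so that $A_{cr}$ becomes a pure rotation of frequency $\omega:=2\sqrt{|\Gamma|}$, the operator $\mathcal{L}_0$ is conjugated in the $(w,v)$ coordinates to $\omega(v\partial_w-w\partial_v)$.

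The crucial observation is that $\kappa=w^2+v^2$ is rotation-invariant, so $\mathcal{L}_0\kappa=0$ and the large $O(N^{1/2})$ term disappears when the generator acts on any function of $\kappa$. Writing $(w,v)=(\sqrt{\kappa}\cos\theta,\sqrt{\kappa}\sin\theta)$, the radial variable $\kappa_N$ is a slow observable while the angle $\theta_N$ is fast, fitting the framework of an averaging principle. Following a perturbed test function approach, for a smooth $g(\kappa)$ I would write $\mathcal{L}_N g(\kappa_N)=\bar{\mathcal{A}}g(\kappa_N)+h(\kappa_N,\theta_N)+o(1)$, where $h$ has zero mean in $\theta$; the fluctuation $h$ is killed by a first-order corrector $g_1(\kappa,\theta)$ solving the Poisson-type equation $\omega\partial_\theta g_1=-h$. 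Performing the explicit uniform average over $\theta\in[0,2\pi)$ of the cubic drift coefficients, and of the quadratic variation coming from the second-order part, produces precisely the drift $4Z_1+\tfrac14 Z_2\kappa^2$ and the squared diffusion coefficient $8Z_1\kappa$, yielding \eqref{limitfluttuazioni2pop}. The hypothesis $Z_2<0$ ensures dissipativity of the effective drift, which in turn provides the moment bounds needed for compactness and pathwise uniqueness of the limiting SDE.

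To close the argument I would establish (i) tightness of $\{\kappa_N\}$ in $C([0,T])$ via an Aldous-type criterion applied to the martingale decomposition of $g(\kappa_N)+N^{-1/2}g_1(\kappa_N,\theta_N)$, (ii) identification of each subsequential limit as a solution of the martingale problem associated to \eqref{limitfluttuazioni2pop}, and (iii) pathwise uniqueness for this SDE, completing the convergence. The initial datum is provided by (\textbf{H2}), under which $\kappa_N(0)\Rightarrow 4\epsilon^2\gamma^2|\Gamma|^{-1}$, as noted in the statement. The main technical obstacle, exactly in the spirit of Remark \ref{remark_on_initial}, is the singularity of the polar change of variables at the origin: the corrector $g_1$ and the effective coefficients involve inverse powers of $\sqrt{\kappa}$, so the process must be kept away from $\kappa=0$. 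Hypothesis (\textbf{H2}) with $\epsilon\neq 0$ forces $\bar\kappa>0$; beyond $t=0$, a localization via stopping times $\tau_\delta=\inf\{t:\kappa_N(t)\le\delta\}$, together with the inward-pointing $4Z_1>0$ part of the drift near $\kappa=0$, is what makes the singularity controllable uniformly in $N$.
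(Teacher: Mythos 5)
Your proposal is correct in outline and shares the paper's architecture: the linear change of variables \eqref{WV2pop} turning the linearization into a rotation of frequency $2\sqrt{|\Gamma|}$, polar coordinates with $\kappa$ slow and $\theta$ fast, averaging of the cubic drift and of the quadratic variation over the angle to produce $4Z_1+\tfrac14 Z_2\kappa^2$ and $8Z_1\kappa$, and localization away from $\kappa=0$. Where you genuinely diverge is in how the averaging is justified. You propose the perturbed-test-function method: a corrector $g_1$ solving $\omega\,\partial_\theta g_1=-h$ absorbs the zero-mean fluctuation, so that $g(\kappa_N)+N^{-1/2}g_1(\kappa_N,\theta_N)$ is an approximate martingale with the averaged generator. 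The paper instead proves a bespoke averaging principle (Proposition \ref{averagingproposition}): after a Skorohod representation it partitions $[0,T]$ into intervals of length $2\pi/(cN^{\xi})$, i.e.\ one period of the fast rotation, and shows the Riemann sums converge to the $\theta$-average, using the Aldous-type increment control on $\kappa_N$ and the separate estimate $E|\eta_N(t+h)-\eta_N(t)|\leq C\sqrt{h}$ for the centered angle $\eta_N(t)=\theta_N(t)-N^{1/2}\omega t$ (Proposition \ref{etavera}). Your route is more compact for identifying the limit generator, but on the localized region it requires verifying that $g_1$ (which inherits inverse powers of $\kappa$ and of $\sqrt{\kappa}$) lies in the domain of the jump generator with uniformly controlled remainders; the paper's route avoids correctors entirely at the price of the explicit modulus-of-continuity estimates for both the radial and the angular component. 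Both are legitimate, and neither escapes the need for the stopping times $\tau^N_{r,R}$.

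Two points deserve sharpening. First, tightness must be formulated in $\mathcal{D}([0,T],\mathbb{R})$ with the Skorohod topology, not in $C([0,T])$: the prelimit $\kappa_N$ is a pure-jump process, and only the limit is continuous (which is then used, as in Proposition \ref{proposizionefinale}, to upgrade to the uniform topology). Second, your final step---removing the localization---is stated only heuristically (``the inward-pointing drift makes the singularity controllable''). The actual argument in the paper goes through the stopped martingale problem (Propositions \ref{stopmart} and \ref{killedprocess}) to identify the stopped limit with the stopped solution of \eqref{limitfluttuazioni2pop}, and then uses that the limit SDE, being the radial part of a two-dimensional diffusion, a.s.\ never reaches the origin, so that $P(\tau_{r,R}\leq T)\to 0$ as $r\to 0$, $R\to\infty$; a Portmanteau argument transfers this to $\limsup_N P(\tau^N_{r,R}\leq T)$ and closes the proof. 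You would need this (or an equivalent) to legitimately discard the stopping times; the positivity of $4Z_1$ alone does not give a bound uniform in $N$.
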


\begin{remark}
Notice that requiring  $Z_2(\gamma,J_{11},J_{12},J_{21})<0$ guarantees global existence and uniqueness of the solution of \eqref{limitfluttuazioni2pop}. It is important to state this assumption formulating Theorem \ref{maintheorem2pop}, since it is easy to find choices for $\gamma,J_{11},J_{12},J_{21},J_{22}$ which satisfy \eqref{condizionecritica1}-\eqref{condizionecritica2} but not $Z_2(\gamma,J_{11},J_{12},J_{21})<0$: for example, one can check it with $\gamma=0.6$, $J_{11}=-10$, $J_{12}=20$, $J_{21}=-15$.
\end{remark}

\begin{remark}
The stochastic differential equations \eqref{eqlimite} and \eqref{limitfluttuazioni2pop}, which describe the limit of the critical dynamics at a Hopf bifurcation in the two models, have the same structure, i.e.
\begin{equation}
d\kappa(t)= (C_1-C_2 \kappa^2(t))dt+\sqrt{C_3 \kappa(t)}dB(t), \label{generalequation}
\end{equation}x with $C_1,C_2,C_3>0$. \end{remark}

\section{Proof of Theorem \ref{mainthm}}
Let us try to sketch the idea of the proof before going into the details: we describe the behavior of the pair $(z_N(t),u_N(t))$ through the polar coordinates $(\kappa_N(t),\theta_N(t))$ such that $$z_N(t)=\sqrt{\kappa_N(t)}\cos\theta_N(t), \:\:\:\:\:\: u_N(t)=\sqrt{\kappa_N(t)}\sin\theta_N(t).$$ The dominant part of $\theta_N$ can be approximated by a deterministic drift of order $N^{1\over 2}$, hence we get convergence of the radial variable $\kappa_N(t)$ to the process \eqref{eqlimite}  by a suitable averaging principle. Actually, this convergence will be proved through a localization argument: first of all, we analyse the convergence of the process $\kappa_N(t)$ stopped when it becomes too large or too small. By means of these stopping times (in particular the one related to the lower bound) we are able to avoid technical problems due to the singularity of the polar coordinates in the origin. Secondly, we characterize the limit of the stopped process as the solution of the \textit{stopped} martingale problem related to the infinitesimal generator of the solution of \eqref{eqlimite}. Finally, we will exploit this characterization of the limit of the stopped process to get the thesis of Theorem \ref{mainthm}.

\subsection{Preliminary computations}\label{pre}
In this subsection we perform some preliminary computations which will be useful in the following. We will also prove existence and uniqueness of the limiting process \eqref{eqlimite}.\\

\subsubsection{Change of variables} First of all, we want to give a motivation for the change of variable \eqref{u}. 
Let $\mathcal{L}_N$ be the infinitesimal generator of the process $(\hat{m}_N(t),\hat{\lambda}_N(t))$ defined in \eqref{wz}. By expanding the generator in a similar fashion to what is done in Lemma \ref{HN},
 one can check that, for a function $f$ regular enough, 
 \begin{equation}
 \label{genfastslow}
 \mathcal{L}_Nf(\hat{m},\hat{\lambda})=
 N^{1\over 2}\mathcal{L}_1f(\hat{m},\hat{\lambda})+\mathcal{L}_2f(\hat{m},\hat{\lambda})+o(1).
 \end{equation} 
 In particular, the dominant part of order $N^{1\over 2}$ is given by
  $$\mathcal{L}_1f(\hat{m},\hat{\lambda})=
  (2\hat{\lambda}-2\hat{m})\partial_{\hat{m}}f(\hat{m},\hat{\lambda})
  +(2\hat{\lambda}-2\beta \hat{m})\partial_{\hat{\lambda}} f(\hat{m},\hat{\lambda})
  =(\nabla f(\hat{m},\hat{\lambda}))^\top A\left(\begin{array}{c}
\hat{m}\\ \hat{\lambda}
\end{array}\right), $$
where $$A=\left( \begin{array}{cc}
-2 & 2 \\ -2\beta & 2
\end{array} \right).$$
Notice that $A$ corresponds to the Jacobian matrix in $(0,0)$ for system \eqref{limitingdyn} at critical point $\beta={\alpha\over 2}+1$ and its eigenvalues are $\lambda_{1,2}=\pm i 2\sqrt{\beta-1}$.
Consider an invertible matrix $C$ such that 
$$CAC^{-1}=\left(\begin{array}{cc}
0 & -2\sqrt{\beta-1} \\ 2 \sqrt{\beta-1} &0
\end{array}\right)$$ and take the change of variables $$\left(\begin{array}{c}
z\\u
\end{array}\right) = C\left(\begin{array}{c}
\hat{m}\\\hat{\lambda}
\end{array}\right).$$
Without pretending to be formal here (see Lemmas \ref{gn} and \ref{HN} for the formal computations), one gets 
$$\mathcal{L}_1f(z,u)=(\nabla f(z,u))^\top CAC^{-1} \left(\begin{array}{c}
z\\u
\end{array}\right)= -2\sqrt{\beta-1}u\partial_{z}f(z,u)+2\sqrt{\beta-1}z\partial_{u}f(z,u), $$
then, passing to the polar coordinates $\kappa=(z)^2+(u)^2$, $\theta=\arctan(u/z)$, 
$$ \mathcal{L}_1f(\kappa,\theta)=2\sqrt{\beta-1}\partial_\theta f(\kappa,\theta).$$
It follows that $\mathcal{L}_1$, which according to \eqref{genfastslow} is the ``fast'' component of the generator $\mathcal{L}_N$, involves only the derivative with respect to $\theta$, which therefore play the role of fast variable compared to the evolution of the ``radial'' variable $\kappa$. This suggests to derive the asymptotic evolution of $\kappa$ by an averaging principle.
One can easily check that a suitable choice for $C$ is given by 
$$ C=\left(\begin{array}{cc}
0 & 1 \\ {\beta\over \sqrt{\beta-1}} & -{1\over\sqrt{\beta-1}}
\end{array}\right),$$ which justifies the change of variable \eqref{u}, i.e. 
$$\begin{cases}
z= \hat{\lambda},\\u=\frac{\beta \hat{m} - \hat{\lambda}}{\sqrt{\beta-1}}.
 \end{cases}$$
 \begin{remark}
We can give a more intuitive idea on the argument for this change of variable: as stated before, one may expect that the "dominant" behaviour of $(w_N(t),z_N(t))$ should be driven by the linear system
\begin{equation}\left(\begin{array}{c} \dot{x}(t) \\ \dot{y}(t)
\end{array}\right) = A \left( \begin{array}{c}
x(t)\\y(t)
\end{array}\right) \label{linearized}.
\end{equation}
So, studying the solutions of \eqref{linearized}:
 we look for a quadratic function $$F(x,y)=(x,y)Q\left(\begin{array}{c}x\\y
\end{array}\right)$$ which is a first integral for \eqref{linearized}. Let $Q$ be a symmetric matrix and let $X(t)=(x(t),y(t))^\top$: then $F(X(t))$ is a first integral if and only if $$\frac{d}{dt}F(X(t))=0\Leftrightarrow (\dot{X}(t))^\top QX(t)+(X(t))^\top Q\dot{X}(t)=0\Leftrightarrow A^\top Q+QA=0.$$
It can be easily checked that $$Q=\left(\begin{array}{cc}\beta & -1 \\ -1 & 1
\end{array}\right)$$ satisfies $A^\top Q+QA=0$, hence, for suitable $c\in \mathbb{R}$, the equation $$(x,y)Q\left(\begin{array}{c}x\\y
\end{array}\right)=c \:\Leftrightarrow \: \beta x^2 - 2xy + y^2 = c$$ identifies an ellipse which is an orbit of the linearized system. Hence, in order to use polar coordinates, we want to transform this ellipse into a circle and this transformation is equivalent to the change of variables described above.
\end{remark} 
\subsubsection{Normal Fluctuations}
In this section, we introduce some technical lemmas and we sketch the proof for Theorem \ref{NormalFlucCWdiss}. 
We first state without proof the following simple fact concerning Markov processes and generators.
\begin{lemma}\label{cambiovariabilegeninf}
Let $(X_t)_{t\geq 0}$ be a Markov process on a metric space $E$ admitting an infinitesimal generator $L$. Let $g:E\to F$ be a function, with $F$ metric space. Assume that, for every $f:F\to \mathbb{R}$ such that $(f\circ g)\in dom(L)$, $L(f\circ g)$ is a function of $g(x)$, i.e. $L(f\circ g) = (Kf)\circ g$. Then, $(g(X_t))_{t\geq 0}$ is a Markov process with infinitesimal generator $K$, defined by $L(f\circ g) = (Kf)\circ g$.

\end{lemma}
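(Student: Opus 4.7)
The plan is to use the martingale-problem characterization of Markov processes and project it along $g$. \textbf{First step:} apply Dynkin's formula for $X$. For every $\phi\in\mathrm{dom}(L)$,
\[
\phi(X_t)-\phi(X_0)-\int_0^t L\phi(X_s)\,ds
\]
is a martingale with respect to the natural filtration $(\mathcal{F}^X_t)$ of $X$. Taking $\phi=f\circ g$ and invoking the hypothesis $L(f\circ g)=(Kf)\circ g$, and writing $Y_t:=g(X_t)$, this gives that
\[
N^f_t \;:=\; f(Y_t)-f(Y_0)-\int_0^t Kf(Y_s)\,ds
\]
is an $(\mathcal{F}^X_t)$-martingale for every admissible $f$. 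Since $Y$ is $(\mathcal{F}^X_t)$-adapted, the filtration $(\mathcal{F}^Y_t)$ generated by $Y$ is contained in $(\mathcal{F}^X_t)$, and conditioning preserves the martingale property; hence $Y$ already solves the martingale problem associated with $K$ in its own filtration.

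\textbf{Second step:} upgrade this martingale identity to the Markov property with generator $K$. Let $u(t,x):=E_x\!\left[f(g(X_t))\right]=P_t(f\circ g)(x)$ denote the $X$-semigroup evaluated at $f\circ g$; it satisfies the backward Kolmogorov equation $\partial_t u=Lu$ with initial datum $f\circ g$. Let $v(t,\cdot)$ be the solution of $\partial_t v=Kv$, $v(0,\cdot)=f$. Thanks to the standing hypothesis, $(t,x)\mapsto v(t,g(x))$ also solves $\partial_t\tilde u=L\tilde u$ with the same initial datum $f\circ g$, so by uniqueness $P_t(f\circ g)(x)=v(t,g(x))$. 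Combined with the Markov property of $X$ this yields
\[
E\!\left[f(Y_t)\,\middle|\,\mathcal{F}^X_s\right] \;=\; P_{t-s}(f\circ g)(X_s) \;=\; v(t-s,Y_s),
\]
and, projecting on $\mathcal{F}^Y_s\subseteq\mathcal{F}^X_s$, one gets the Markov property of $Y$ with transition semigroup $Q_t f(y):=v(t,y)$ and infinitesimal generator $K$.

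The main obstacle is the uniqueness invoked in the second step: in full generality one needs enough regularity on $L$, on $K$, and on the class of admissible test functions $f$ to ensure that the backward Kolmogorov equation (equivalently, the martingale problem for $L$) is well-posed, so that two solutions with the same initial datum must coincide. This is the standard caveat that justifies why the authors call the statement ``simple'': in every concrete use made of Lemma \ref{cambiovariabilegeninf} in what follows, $g$ is an explicit smooth (and often invertible) change of variables, the state space of $X$ is $\mathcal{S}^N\times\mathbb{R}$ and the dynamics are those of a bounded-coefficient jump process with a simple drift, so the relevant martingale problem is well-posed on a sufficiently rich core and the identification of $K$ as the generator of $Y$ is immediate.
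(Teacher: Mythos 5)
The paper states this lemma explicitly \emph{without proof} (``We first state without proof the following simple fact\ldots''), so there is no argument of the authors' to compare yours against; your proposal has to stand on its own. What you write is the standard proof of Dynkin's lumpability criterion and it is essentially sound: the first step correctly transfers the martingale problem from $X$ to $Y=g(X)$ (the process $N^f_t$ is $\mathcal{F}^Y_t$-adapted, so projecting onto the smaller filtration does preserve the martingale property), and the second step correctly identifies where the real content lies, namely in upgrading ``$Y$ solves the martingale problem for $K$'' to ``$Y$ is Markov with generator $K$'' via the semigroup intertwining $P_t(f\circ g)=(Q_tf)\circ g$. Two caveats are worth making explicit beyond what you wrote. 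First, in deriving $\partial_t\bigl(v(t,g(\cdot))\bigr)=L\bigl(v(t,g(\cdot))\bigr)$ you apply the standing hypothesis to $v(t,\cdot)$ rather than to $f$, so you implicitly need the class of admissible test functions to be invariant under the $K$-semigroup (equivalently, $v(t,\cdot)\circ g\in\mathrm{dom}(L)$ for all $t$); this should be stated. Second, as you note, the uniqueness step requires well-posedness of the relevant Kolmogorov equation or martingale problem, which the lemma as stated does not assume — so your proof establishes the lemma under mild additional hypotheses rather than in the full generality of its wording. Both caveats are harmless in every application in the paper: for fixed $N$ the spin component lives on a finite state space, the jump rates are bounded, the continuous component has Lipschitz drift, and the maps $g$ are explicit (the projection $(\underline{\sigma},\lambda)\mapsto(m_N,\lambda)$ in Lemma \ref{orderparameter} and smooth invertible changes of variables thereafter), so the required cores and uniqueness are available. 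An alternative, more elementary route in the finite-state setting would be to verify Dynkin's criterion directly on the transition rates (the aggregate rate of jumping from $\underline{\sigma}$ into any level set of $g$ depends only on $g(\underline{\sigma})$), which avoids the backward equation altogether, but your semigroup argument is the more general and more standard one.
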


In what follows we say that $g(x)$ is an {\em order parameter} of the Markov process $(X_t)_{t\geq 0}$.

\begin{lemma}\label{orderparameter}
For $t\in [0,T]$, the process $(m_N(t),\lambda_N(t))\in [-1,1]\times \mathbb{R}$, defined by \eqref{lambda} and \eqref{magne}, is an order parameter of the system.
\end{lemma}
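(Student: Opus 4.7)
The plan is to apply Lemma \ref{cambiovariabilegeninf} with $E = \mathcal{S}^N \times \mathbb{R}$, $F = [-1,1] \times \mathbb{R}$, and the projection map $g(\underline{\sigma},\lambda) = (m(\underline{\sigma}),\lambda)$, where $m(\underline{\sigma}) = \frac{1}{N}\sum_{j=1}^N \sigma_j$. What needs to be verified is the hypothesis of that lemma: for any smooth bounded $f:[-1,1]\times\mathbb{R}\to\mathbb{R}$, the quantity $L_N(f\circ g)(\underline{\sigma},\lambda)$ depends on $\underline{\sigma}$ only through $m(\underline{\sigma})$.

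First I would write out $L_N(f\circ g)$ explicitly using \eqref{infgenCWdiss}. Since flipping the $i$-th spin changes the magnetization by $m(\underline{\sigma}^i) = m(\underline{\sigma}) - \frac{2\sigma_i}{N}$, one obtains
\begin{equation*}
L_N(f\circ g)(\underline{\sigma},\lambda) = \sum_{i=1}^N \bigl(1-\tanh(\sigma_i\lambda)\bigr)\left[f\!\left(m - \tfrac{2\sigma_i}{N},\,\lambda - \tfrac{2\beta\sigma_i}{N}\right) - f(m,\lambda)\right] - \alpha\lambda\,\partial_\lambda f(m,\lambda),
\end{equation*}
where $m = m(\underline{\sigma})$. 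The next step is to split the sum according to $\sigma_i = \pm 1$. Writing $N_\pm = \#\{i:\sigma_i = \pm 1\}$ and using $N_+ + N_- = N$ together with $N_+ - N_- = Nm$, one gets $N_\pm = \frac{N(1\pm m)}{2}$, so the summand depends on $i$ only through $\sigma_i$ and the sum collapses to
\begin{align*}
L_N(f\circ g) &= \tfrac{N(1+m)}{2}\bigl(1-\tanh\lambda\bigr)\left[f\!\left(m - \tfrac{2}{N},\,\lambda - \tfrac{2\beta}{N}\right) - f(m,\lambda)\right] \\
&\quad + \tfrac{N(1-m)}{2}\bigl(1+\tanh\lambda\bigr)\left[f\!\left(m + \tfrac{2}{N},\,\lambda + \tfrac{2\beta}{N}\right) - f(m,\lambda)\right] - \alpha\lambda\,\partial_\lambda f(m,\lambda).
\end{align*}

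This expression depends on the configuration only through $m$ and $\lambda$, so it is of the form $(Kf)(m,\lambda)$ for an explicit operator $K$. Lemma \ref{cambiovariabilegeninf} then immediately implies that $\bigl(m_N(t),\lambda_N(t)\bigr)$ is a Markov process with generator $K$, which by definition makes it an order parameter for the system. There is no serious obstacle here: the only point to check is the elementary combinatorial reduction that the rate $1-\tanh(\sigma_i\lambda)$ and the increments of $f$ depend on $i$ solely through the sign of $\sigma_i$, so that the sum over sites reduces to a sum over the two values $\sigma_i = \pm 1$ weighted by $N_\pm$.
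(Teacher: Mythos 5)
Your proof is correct and follows essentially the same route as the paper: apply Lemma \ref{cambiovariabilegeninf} to the projection $g(\underline{\sigma},\lambda)=(m(\underline{\sigma}),\lambda)$ and collapse the sum over sites into a sum over the two spin values weighted by $N_{\pm}=\frac{N(1\pm m)}{2}$, which is exactly the paper's $|A_N(j)|$ computation leading to the generator \eqref{geninfmagne}. No gaps.
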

\begin{proof}
Notice that $(m_N,\lambda_N)$ is an empirical average in the sense defined above since $$m_N(t)=\int_{\mathcal{S}\times\mathbb{R}} \sigma d\rho_N(t), \:\:\:\:\:\:\:\:\:\: \lambda_N(t)=\int_{\mathcal{S}\times\mathbb{R}} \lambda d\rho_N(t).$$So we are left to prove that $(m_N,\lambda_N)$ is a Markov process: it is enough to compute its infinitesimal generator $K_N$. Consider the function $g:\mathcal{S}^N\times\mathbb{R}\to[-1,+1]\times \mathbb{R}$ defined by $$g(\underline{\sigma},\lambda)=\left( \frac{1}{N}\sum_{j=1}^N \sigma_j,\lambda\right)=:(m,\lambda).$$ If, for $f:[-1,1]\times\mathbb{R}\to \mathbb{R}$, $f\in dom(L_N)$, we compute $L_N(f\circ g)$: \begin{align*}L_N(f\circ g)(\underline{\sigma},\lambda)&=\sum_{i=1}^N \left[ \Big( 1-\tanh(\sigma_i\lambda)\Big) \left( f\left( m - \frac{2\sigma_i}{N}, \lambda-\frac{2\beta\sigma_i}{N}\right)-f\left(m,\lambda\right) \right)  \right] -\alpha\lambda f_\lambda(m,\lambda)=\\&= \sum_{j\in\mathcal{S}}\left[ |A_N(j)|\Big(1-\tanh(j\lambda)\Big) \left( f \left( m -\frac{2j}{N},\lambda-\frac{2\beta j}{N}\right)-f\left(m,\lambda\right)\right)  \right]-\alpha \lambda f_\lambda(m,\lambda) \end{align*}
where $A_N(j)$ is the set of $\sigma_i$, $i=1,\dots,N$, such that $\sigma_i=j$ with $j\in\mathcal{S}$. Then, we have \begin{equation}
\label{AN} |A_N(j)|=\frac{N\left(1+j\frac{\sum_{k=1}^N\sigma_k}{N}\right)}{2}=\frac{N(1+jm)}{2}.
\end{equation}
Therefore, thanks to Lemma \ref{cambiovariabilegeninf}, $(m_N,\lambda_N)$ is a Markov process with infinitesimal generator $K_N$ given by: \begin{equation}
\label{geninfmagne} K_N f(m,\lambda)=\sum_{j\in\mathcal{S}}\left[ |A_N(j)|\Big(1-\tanh(j\lambda)\Big) \left( f \left( m -\frac{2j}{N},\lambda-\frac{2\beta j}{N}\right)-f\left(m,\lambda\right)\right)  \right]-\alpha \lambda f_\lambda(m,\lambda).
\end{equation}
\end{proof}
\begin{proof}[Proof of Theorem \ref{NormalFlucCWdiss}]
Let $I_t$ be the infinitesimal generator of the solution of \ref{CLT}, namely, for $f\in dom(K)$:
 \begin{align*} I_t f(\tilde{m},\tilde{\lambda})&=(1-m(t)\tanh(\lambda(t))(2f_{\tilde{m}\tilde{m}}+2\beta^2f_{\tilde{\lambda}\tilde{\lambda}}+4\beta f_{\tilde{m}\tilde{\lambda}})+\\
 &+2((1+\tanh(\lambda(t))\tilde{\lambda}-\tilde{m})f_{\tilde{m}}+((2\beta-\alpha)\tilde{\lambda}-2\beta(1-\tanh(\lambda(t))\tilde{m})f_{\tilde{\lambda}}.\end{align*}
Let $K_N$ be the infinitesimal generator of the process $(m_N(t),\lambda_N(t))$ (see \eqref{geninfmagne})
and consider the time-dependent, linear and invertible tranformation 
$$
(\tilde{m},\tilde{\lambda})= g_t(m,\lambda)= (N^{1\over 2}(m-m(t)),N^{1\over 2}(\lambda-\lambda(t)):
$$ applying $K_N$ to a function $f(g_t(m,\lambda))$, by simple computations and Lemma \ref{cambiovariabilegeninf} we can check that $\tilde{I}_{t,N}$, the infinitesimal generator of the time-inhomogeneous Markov process $(\tilde{m}_N(t),\tilde{\lambda}_N(t))$ reads: \begin{align*}
\tilde{I}_{t,N} f(\tilde{m},\tilde{\lambda})&=K_N f(g_t(m,\lambda))=\\
&=\frac{N(1+m(t))+N^{1\over 2}\tilde{m}}{2}\Big(1-\tanh(N^{-{1\over 2}}\tilde{\lambda}+\lambda(t))\Big) \left( f \left( \tilde{m}
  -\frac{2}{N^{1\over 2}},\tilde{\lambda}-\frac{2\beta}{N^{1\over 2}}\right)
  -f\left(\tilde{m},\tilde{\lambda} \right)\right)  +\\
 &+\frac{N(1-m(t))-N^{1\over 2}\tilde{m}}{2}\Big(1+\tanh(N^{-{1\over 2}}\tilde{\lambda}+\lambda(t))\Big) \left( f \left( \tilde{m} +\frac{2}{N^{1\over 2}},\tilde{\lambda}+\frac{2\beta }{N^{1\over 2}}\right)-f\left(\tilde{m},\tilde{\lambda}\right)\right)+\\
 &-\alpha( \tilde{\lambda}+N^{1\over 2}\lambda(t)) f_{\tilde{\lambda}}(\tilde{m},\tilde{\lambda})-N^{1\over 2}\dot{m}(t)f_{\tilde{m}}-N^{1\over 2} \dot{\lambda}(t)f_{\tilde{\lambda}}.
\end{align*}
By Corollary 8.6, Chapter 4 of \cite{EK86}, it is enough to show that, for any $f\in\mathcal{C}^3_b( \mathbb{R}^2)$ it holds that $$\lim_{N\to\infty}\sup_{(\tilde{m},\tilde{\lambda})\in\mathbb{R}^2}|\tilde{I}_{t,N}f(\tilde{m},\tilde{\lambda})-I_t f(\tilde{m},\tilde{\lambda})|=0,$$ which can be easily checked performing a first order Taylor expansion of $\tanh(N^{-{1\over 2}}\tilde{\lambda}+\lambda(t))$ around $\lambda(t)$ and a second order Taylor expansion of $f$ around $(\tilde{m},\tilde{\lambda})$ in the expression for $\tilde{I}_{N,t}f(\tilde{m},\tilde{\lambda})$.
\end{proof}
\subsubsection{Expansion of infinitesimal generators}\label{espansionegeneratori}
In this paragraph,  we study the asymptotic expansion of the infinitesimal generators of the processes involved in the proof of the main result.
\begin{lemma}\label{gn}
In the critical case $\beta={\alpha\over 2}+1$, the infinitesimal generator $G_N$ of the Markov pair $(z_N(t),u_N(t))$ is given by: \begin{align*}
G_Nf(z,u)&=N^{1\over 2}\frac{N (1+N^{-{1\over 4}}\beta^{-1}(\sqrt{\beta-1}u+z))}{2}\Big( 1-\tanh(N^{-{1\over 4}}z)    \Big)\left( f\left( z-\frac{2\beta}{N^{3\over 4}},u\right) -f(z,u)\right)+\\ &+N^{1\over 2}\frac{N (1-N^{-{1\over 4}}\beta^{-1}(\sqrt{\beta-1}u+z))}{2}\Big( 1+\tanh(N^{-{1\over 4}}z)    \Big)\left( f\left( z+\frac{2\beta}{N^{3\over 4}},u\right) -f(z,u)\right)+\\ &+ N^{1\over 2} \Big( -2(\beta-1)zf_z(z,u)+ 2\sqrt{\beta-1}zf_u(z,u)  \Big).
\end{align*}
\end{lemma}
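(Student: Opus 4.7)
The plan is to obtain $G_N$ by composing three elementary transformations with the generator $K_N$ of $(m_N,\lambda_N)$ derived in Lemma \ref{orderparameter}: the diffusive time rescaling $t\mapsto N^{1/2}t$, the spatial rescaling $(m,\lambda)\mapsto(N^{1/4}m,N^{1/4}\lambda)=(\hat{m},\hat{\lambda})$, and the linear change of variables $(\hat{m},\hat{\lambda})\mapsto(z,u)$ prescribed by \eqref{u}. Since each transformation is deterministic, time-independent and invertible, Lemma \ref{cambiovariabilegeninf} applies at every step and reduces the problem to pulling back $K_N$ through the composed map.

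The first two steps are essentially bookkeeping. The time rescaling multiplies the generator by an overall factor $N^{1/2}$. Under the spatial rescaling, the jumps $m\to m-2j/N$ and $\lambda\to\lambda-2\beta j/N$ become $\hat{m}\to\hat{m}-2jN^{-3/4}$ and $\hat{\lambda}\to\hat{\lambda}-2\beta jN^{-3/4}$, while $|A_N(j)|=N(1+jm)/2$ and $1-\tanh(j\lambda)$ turn into $N(1+jN^{-1/4}\hat{m})/2$ and $1-\tanh(jN^{-1/4}\hat{\lambda})$; the drift term $-\alpha\lambda f_\lambda$ becomes $-\alpha\hat{\lambda} f_{\hat{\lambda}}$ because the Jacobian $\partial_\lambda=N^{1/4}\partial_{\hat{\lambda}}$ cancels the factor $N^{-1/4}$ from $\lambda=N^{-1/4}\hat{\lambda}$.

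The decisive step is the linear change $(z,u)=C(\hat{m},\hat{\lambda})$, whose inverse reads $\hat{\lambda}=z$ and $\hat{m}=(\sqrt{\beta-1}\,u+z)/\beta$. The key structural observation is that the jump increment $(-2jN^{-3/4},-2\beta jN^{-3/4})$ in $(\hat{m},\hat{\lambda})$ is mapped to $(-2\beta jN^{-3/4},0)$ in $(z,u)$; this cancellation in the $u$-coordinate is precisely what motivated the choice of $C$ in Subsection \ref{pre}. Substituting the inverse transformation into the rescaled jump part therefore yields exactly the first two lines of the displayed formula for $G_N$. For the drift, the chain rule gives $\partial_{\hat{\lambda}}(f\circ C^{-1})=f_z-(1/\sqrt{\beta-1})f_u$, so $-\alpha\hat{\lambda}f_{\hat{\lambda}}$ becomes $-\alpha z f_z+(\alpha/\sqrt{\beta-1})z f_u$; invoking the critical identity $\alpha=2(\beta-1)$ rewrites the coefficients as $-2(\beta-1)z$ and $2\sqrt{\beta-1}\,z$, and multiplication by the time-scale factor $N^{1/2}$ produces the last line of the statement.

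No substantial obstacle is anticipated: once the two changes of variables are performed carefully via Lemma \ref{cambiovariabilegeninf}, the whole proof reduces to a direct substitution. The only points that require attention are the exact cancellation in the $u$-component of the jumps—which is a structural, not incidental, property of $C$—and the use of the critical relation $\alpha=2(\beta-1)$ that collapses the drift coefficients to the clean form displayed in the lemma.
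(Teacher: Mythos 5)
Your proposal is correct and follows essentially the same route as the paper: both reduce the computation to Lemma \ref{cambiovariabilegeninf}, pull back the generator $K_N$ of \eqref{geninfmagne} through the spatial rescaling and the linear map $C$, and then multiply by $N^{1/2}$ for the time change (the paper merely fuses the two spatial steps into a single map $g$ before rescaling time). All the key points you flag — the exact cancellation of the jump in the $u$-coordinate and the use of $\alpha=2(\beta-1)$ to simplify the drift — are precisely the computations carried out in the paper's proof.
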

\begin{proof}
Define the following processes, for $t\in[0,T]$: $$\tilde{w}_N(t)=N^{1\over 4}m_N(t), \:\:\:\:\: \tilde{z}_N(t)=N^{1\over 4}\lambda_N(t), \:\:\:\:\: \tilde{u}_N(t)=\frac{\beta\tilde{w}_N(t)-\tilde{z}_N(t)}{\sqrt{\beta-1}}. $$ We want to identify the infinitesimal generator of $(\tilde{z}_N(t),\tilde{u}_N(t))$ applying Lemma \ref{cambiovariabilegeninf}: consider the function $$g(m,\lambda)=\left( N^{1\over 4}\lambda, N^{1\over 4}\frac{\beta m - \lambda}{\sqrt{\beta-1}}  \right)$$ 
and evaluate the infinitesimal generator $K_N$ defined in \eqref{geninfmagne} on the function $(f\circ g)$. It's just a matter of computation to notice that we get the infinitesimal generator $\tilde{G}_N$ defined by \begin{align*}
\tilde{G}_Nf(\tilde{z},\tilde{u})&=K_N(f\circ g)(m,\lambda)=
\\ &+  \frac{N (1+N^{-{1\over 4}}\beta^{-1}(\sqrt{\beta-1}\tilde{u}+\tilde{z}))}{2}\Big( 1-\tanh(N^{-{1\over 4}}\tilde{z})    \Big)\left( f\left( \tilde{z}-\frac{2\beta}{N^{3\over 4}},\tilde{u}\right) -f(\tilde{z},\tilde{u})\right)+
\\ &+\frac{N (1-N^{-{1\over 4}}\beta^{-1}(\sqrt{\beta-1}\tilde{u}+\tilde{z}))}{2}\Big( 1+\tanh(N^{-{1\over 4}}\tilde{z})    \Big)\left( f\left( \tilde{z}+\frac{2\beta}{N^{3\over 4}},\tilde{u}\right) -f(\tilde{z},\tilde{u})\right)+
\\ &+ \Big( -2(\beta-1)\tilde{z}f_{\tilde{z}}(\tilde{z},\tilde{u})+ 2\sqrt{\beta-1}\tilde{z}f_{\tilde{u}}(\tilde{z},\tilde{u})  \Big) .
\end{align*}
Finally, we obtain $(z_N(t),u_N(t))$ from $(\tilde{z}_N(t),\tilde{u}_N(t))$ rescaling time by a factor $N^{1\over 2}$: the infinitesimal generator $G_N$ of $(z_N(t),u_N(t))$ is obtained by $$G_Nf(z,u)=N^{1\over 2}\tilde{G}_Nf(\tilde{z},\tilde{u}).$$
\end{proof}

\begin{lemma}\label{HN}
Fix any $r,R>0$ such that $r<{\beta \over \beta-1}\bar{\lambda}^2<R$ and let $$\tau^N_{r,R}:=\inf\{t\in [0,T]\:|\: z_N^2(t)+u_N^2(t) \not\in\: ]r,R[ \}.$$ For any $t\in[0,T]$ define the polar coordinates process $(\kappa_N(t),\theta_N(t))$ by $$\kappa_N(t)=z_N^2(t)+u_N^2(t),$$  $$\theta_N(t)= \arctan\left(\frac{u_N(t)}{z_N(t)}\right). $$ Then, the stopped process $(\kappa_N(t\wedge\tau^N_{r,R}),\theta_N(t\wedge\tau^N_{r,R}))$ has an infinitesimal generator $H_N^{r,R}$ which, for a function $f:[r,R]\times[-{\pi\over 2},{\pi \over 2}]\to \mathbb{R}$, $f\in\mathcal{C}^3$, satisfies \begin{align*}
H_N^{r,R}f(\kappa,\theta)&=\mathbbm{1}_{]r,R[}(\kappa)\left[8\beta^2\kappa\cos^2\theta f_{\kappa\kappa}(\kappa,\theta) -8\beta^2\cos\theta\sin\theta f_{\kappa\theta}(\kappa,\theta)+ \frac{2\beta^2\sin^2\theta}{\kappa} f_{\theta\theta}(\kappa,\theta)+\right.\\&+\left( N^{1\over 2}2\sqrt{\beta-1} + \frac{4\beta^2\cos\theta\sin\theta}{\kappa}+\frac{2\beta}{3} \kappa\cos^3\theta\sin\theta +2\sqrt{\beta-1}\sin\theta\right)f_\theta(\kappa,\theta)+\\&+\left. \left(4\beta^2-\frac{4\beta}{3}\kappa^2\cos^4\theta\right)f_\kappa(\kappa,\theta)+o_{r,R}(1)\right], 
\end{align*}
where the remainders $o_{r,R}(1)$ can be uniformly dominated by a term of order $o(1)$ on $[r,R]\times[-{\pi\over 2}, {\pi\over 2}]$.
\end{lemma}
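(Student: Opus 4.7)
My plan is to derive $H_N^{r,R}$ directly from the explicit generator $G_N$ of Lemma \ref{gn} by applying the change-of-variables result (Lemma \ref{cambiovariabilegeninf}) with $g(z,u)=(z^2+u^2,\arctan(u/z))$. The stopping at $\tau^N_{r,R}$ is encoded by the factor $\mathbbm{1}_{]r,R[}(\kappa)$: on $\{\kappa\in\,]r,R[\}$ the Markov process evolves exactly as $(\kappa_N,\theta_N)$ does, while on the complement the generator is zero. Crucially, because $\kappa$ is kept bounded away from $0$ and from $\infty$, the polar map is smooth with uniformly bounded derivatives, so we may apply it freely to $C^3$ functions. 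I would first record the chain-rule identities $\tilde f_z=2\sqrt{\kappa}\cos\theta\, f_\kappa-\frac{\sin\theta}{\sqrt{\kappa}}f_\theta$, $\tilde f_u=2\sqrt{\kappa}\sin\theta\, f_\kappa+\frac{\cos\theta}{\sqrt{\kappa}}f_\theta$, and compute $\tilde f_{zz}$ in polar form; the analogous formulas for $\tilde f_{zzz}$ are only needed to check that the third-order contribution is negligible.

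Next, I would Taylor-expand the two jump terms in Lemma \ref{gn} in the step size $a=2\beta N^{-3/4}$. Setting $\mu=N^{-1/4}\beta^{-1}(\sqrt{\beta-1}u+z)$ and $\nu=N^{-1/4}z$, the two rates combine to $\mathrm{rate}_\pm-\mathrm{rate}_\mp=N(\tanh\nu-\mu)$ and $\mathrm{rate}_++\mathrm{rate}_-=N(1-\mu\tanh\nu)$. Using $\tanh\nu=\nu-\nu^3/3+O(\nu^5)$, the coefficient of $\tilde f_z$ in the discrete part equals
\[
2\beta N^{3/4}(\tanh\nu-\mu)=2(\beta-1)N^{1/2}z-2\sqrt{\beta-1}N^{1/2}u-\tfrac{2\beta}{3}z^3+O(N^{-1/2}),
\]
while the coefficient of $\tilde f_{zz}$ equals $2\beta^2(1-\mu\tanh\nu)=2\beta^2+O(N^{-1/2})$, and the $\tilde f_{zzz}$ contribution is $O(N^{-1})$. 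The deterministic drift adds $-2(\beta-1)N^{1/2}z\,\tilde f_z+2\sqrt{\beta-1}N^{1/2}z\,\tilde f_u$, which I keep grouped together rather than subtracting only from the $\tilde f_z$ part.

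The key algebraic step is then the cancellation at order $N^{1/2}$. Collecting the surviving $O(N^{1/2})$ pieces yields $N^{1/2}\cdot 2\sqrt{\beta-1}\bigl(-u\,\tilde f_z+z\,\tilde f_u\bigr)$, and the identity $-u\,\tilde f_z+z\,\tilde f_u=f_\theta$ (verified directly from the polar formulas, or seen from the fact that $-u\partial_z+z\partial_u$ is the angular vector field) reduces the full $N^{1/2}$-contribution to the single term $N^{1/2}\cdot 2\sqrt{\beta-1}\,f_\theta$. All $O(1)$ pieces of the drift ($-\tfrac{2\beta}{3}z^3\,\tilde f_z$) and of the diffusion ($2\beta^2\,\tilde f_{zz}$) are then substituted using $z=\sqrt\kappa\cos\theta$, $u=\sqrt\kappa\sin\theta$ and $u^2+z^2=\kappa$; this produces exactly the stated coefficients $8\beta^2\kappa\cos^2\theta$, $-8\beta^2\cos\theta\sin\theta$, $2\beta^2\sin^2\theta/\kappa$, $4\beta^2-\tfrac{4\beta}{3}\kappa^2\cos^4\theta$, and the low-order $\theta$-drift pieces.

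The step I expect to be most delicate is the control of the remainder term $o_{r,R}(1)$. Each jump expansion produces fourth- and higher-order remainders which, a priori, involve $\tilde f^{(k)}$ evaluated at points between $z$ and $z\pm 2\beta N^{-3/4}$; after passing to polar these translate into derivatives of $f$ times negative powers of $\sqrt{\kappa}$. To ensure uniform smallness one must use that (i) $\kappa$ is bounded above and below on $[r,R]$ (so that all derivatives of the polar change are uniformly bounded), (ii) $z=\sqrt\kappa\cos\theta$ is uniformly bounded (so that the Taylor expansion of $\tanh(N^{-1/4}z)$ converges with explicit bounds), and (iii) $f\in\mathcal{C}^3$ gives uniformly bounded third-order derivatives. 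Once this uniform control is in place, the $O(N^{-1/2})$ corrections in the jump coefficients, the $\tilde f_{zzz}$ contribution, and the neglected higher Taylor terms are all dominated by a single expression of order $o(1)$ uniformly on $[r,R]\times[-\pi/2,\pi/2]$, yielding the stated formula.
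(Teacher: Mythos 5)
Your proposal follows essentially the same route as the paper's proof: apply $G_N$ from Lemma \ref{gn} to $f\circ g$ with $g(z,u)=(z^2+u^2,\arctan(u/z))$, Taylor-expand the jump terms and $\tanh$, observe the cancellation of the $N^{1/2}$ drift down to $2\sqrt{\beta-1}\,f_\theta$, and convert to polar coordinates via the same chain-rule identities, with the remainder controlled uniformly thanks to $\kappa\in[r,R]$. The only (harmless) discrepancy is the term $2\sqrt{\beta-1}\sin\theta$ in the stated $f_\theta$-coefficient, which your expansion (correctly, as far as I can check) does not produce and which in any case is irrelevant for the subsequent averaging of the radial variable.
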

\begin{remark}\label{positivityofkappa}
The process $\theta_N(t)$ defined in Lemma \ref{HN} is almost surely well-defined for $t\in[0,T]$. In fact we have that $$P\Big( \exists\:\: t\in[0,T] \:\: s.t. \:\: z_N(t)=0\Big)=0:$$ from \eqref{lambda}, we can see that $\lambda_N$ (hence $z_N$) can hit 0 only when a jump occurs. Let $(\tau_n)_n$ be a sequence of jump times for $\lambda_N$: for any $n$, we have that $$P\big( \lambda_N(\tau_n)=0\:\big|\:\mathcal{F}_{\tau_{n-1}} \big)\leq P\big(	\tau_n\in A_{n-1}\:\big|\:\mathcal{F}_{\tau_{n-1}}	\big)$$ where $A_{n-1}$ is an aleatory set such that $|A_{n-1}|\leq 1$. In fact, intuitively, since the jump size of $\lambda_N$ is fixed and the trajectories of $\lambda_N$ are strictly increasing or strictly decreasing between two jumps, then there exists at most one point in which the jump leading to 0 can occur. Moreover, the distribution of the jump times is absolutely continuous with respect to the Lebesgue measure over $\mathbb{R}$, hence $$P\big(	\tau_n\in A_{n-1}\:\big|\:\mathcal{F}_{\tau_{n-1}}	\big)=0.$$ In conclusion, $$P\Big( \exists\:\: t\in[0,T] \:\: s.t. \:\: z_N(t)=0\Big)\leq \sum_n E\left[ P\big(	\tau_n\in A_{n-1}\:\big|\:\mathcal{F}_{\tau_{n-1}}	\big)\right]=0.$$
\end{remark}
\begin{proof}
Consider the function $$g(z,u)=\left(z^2+u^2,\arctan\left({u\over z}\right)\right)$$ and apply generator $G_N$ defined in Lemma \ref{gn} to $(f\circ g)(z,u)$ with $f\in\mathcal{C}^3([r,R]\times[-{\pi\over 2}, {\pi\over 2}])$. By standard Taylor expansions we get: \begin{align*}
&G_N(f\circ g)(z,u)=\\ &=\frac{N^{3\over 2} (\beta+N^{-{1\over 4}}(\sqrt{\beta-1}u+z))}{2\beta}\Big( 1-\tanh(N^{-{1\over 4}}z)    \Big)\left( (f\circ g)\left( z-\frac{2\beta}{N^{3\over 4}},u\right) -(f\circ g)(z,u)\right)+\\ &+\frac{N^{3\over 2} (\beta-N^{-{1\over 4}}(\sqrt{\beta-1}u+z))}{2\beta}\Big( 1+\tanh(N^{-{1\over 4}}z)    \Big)\left( (f\circ g)\left( z+\frac{2\beta}{N^{3\over 4}},u\right) -(f\circ g)(z,u)\right)+\\
 &+ N^{1\over 2} \Big( -2(\beta-1)z(f\circ g)_z+ 2\sqrt{\beta-1}z(f\circ g)_u  \Big)=\\&=\frac{\beta N^{3\over 2} +N^{5\over 4}(\sqrt{\beta-1} u +z)}{2\beta}   \left( 1-\frac{z}{N^{1\over 4}}+\frac{z^3}{3N^{3\over 4}}+o\left(\frac{1}{N^{3\over 4}}\right)\right)\left( -\frac{2\beta}{N^{3\over 4}}(f\circ g)_z + \frac{2\beta^2}{N^{3\over 2}}(f\circ g)_{zz} + o\left(\frac{1}{N^{3\over 2}}\right)\right)+\\ &+  
 \frac{\beta N^{3\over 2} -N^{5\over 4}(\sqrt{\beta-1} u +z)}{2\beta}   \left( 1+\frac{z}{N^{1\over 4}}-\frac{z^3}{3N^{3\over 4}}+o\left(\frac{1}{N^{3\over 4}}\right)\right)\left( \frac{2\beta}{N^{3\over 4}}(f\circ g)_z - \frac{2\beta^2}{N^{3\over 2}}(f\circ g)_{zz} + o\left(\frac{1}{N^{3\over 2}}\right)\right)+\\ 
 &+ N^{1\over 2} \Big( -2(\beta-1)z(f\circ g)_z+ 2\sqrt{\beta-1}z(f\circ g)_u  \Big) =\\&
 =2\beta^2(f\circ g)_{zz} - \left( \frac{2\beta z^3}{3} + N^{1\over 2} 2\beta \sqrt{\beta-1}u \right)(f\circ g)_z + N^{1\over 2} 2\sqrt{\beta-1}z(f\circ g)_u + o(1).
\end{align*}
Now, observe that: $$(f\circ g)_z=2z f_\kappa - \frac{u}{z^2+u^2} f_\theta,\:\:\:\:\:\:\:\:\: (f\circ g)_u=2uf_\kappa + \frac{z}{z^2+u^2}f_\theta,$$
$$ (f\circ g)_{zz}= 4z^2f_{\kappa\kappa} - \frac{4zu}{z^2+u^2}f_{\kappa\theta} + \frac{u^2}{(z^2+u^2)^2}f_{\theta\theta} + 2f_\kappa + \frac{2uz}{(z^2+u^2)^2}f_\theta,$$
hence \begin{align*}
G_N(f\circ g)(z,u)&=\:8\beta^2 z^2 f_{\kappa\kappa} - \frac{8\beta^2zu}{z^2+u^2}f_{\kappa\theta} +\frac{2\beta^2u^2}{(z^2+u^2)^2}f_{\theta\theta}+ \\ &+\left( N^{1\over 2} 2\sqrt{\beta-1} + \frac{4\beta^2zu}{(z^2+u^2)^2}+\frac{2\beta z^3u}{3(z^2+u^2)} \right)f_\theta+ \\&+ \left( 4\beta^2 -  \frac{4\beta z^4}{3}\right)f_\kappa + o(1),
\end{align*}
which finally yields, thanks to Lemma \ref{cambiovariabilegeninf}, that the infinitesimal generator $H_N$ of $(\kappa_N(t),\theta_N(t))$ satisfies: \begin{align*}
H_Nf(\kappa,\theta)&=8\beta^2\kappa\cos^2\theta f_{\kappa\kappa}(\kappa,\theta) -8\beta^2\cos\theta\sin\theta f_{\kappa\theta}(\kappa,\theta)+ \frac{2\beta^2\sin^2\theta}{\kappa} f_{\theta\theta}(\kappa,\theta)+\\&+\left( N^{1\over 2}2\sqrt{\beta-1} + \frac{4\beta^2\cos\theta\sin\theta}{\kappa}+\frac{2\beta}{3} \kappa\cos^3\theta\sin\theta +2\sqrt{\beta-1}\sin\theta\right)f_\theta(\kappa,\theta)+\\&+ \left(4\beta^2-\frac{4\beta}{3}\kappa^2\cos^4\theta\right)f_\kappa(\kappa,\theta)+o(1).
\end{align*}
Therefore, the infinitesimal generator $H^{r,R}_N$ of the stopped process $(\kappa_N(t\wedge\tau^N_{r,R}),\theta(t\wedge\tau^N_{r,R})$ satisfies, for a function $f\in\mathcal{C}^3([r,R]\times[-{\pi\over 2},{\pi \over 2}])$: \begin{align*}
H_N^{r,R}f(\kappa,\theta)&=\mathbbm{1}_{]r,R[}(\kappa)\left[8\beta^2\kappa\cos^2\theta f_{\kappa\kappa}(\kappa,\theta) -8\beta^2\cos\theta\sin\theta f_{\kappa\theta}(\kappa,\theta)+ \frac{2\beta^2\sin^2\theta}{\kappa} f_{\theta\theta}(\kappa,\theta)+\right.\\&+\left( N^{1\over 2}2\sqrt{\beta-1} + \frac{4\beta^2\cos\theta\sin\theta}{\kappa}+\frac{2\beta}{3} \kappa\cos^3\theta\sin\theta +2\sqrt{\beta-1}\sin\theta\right)f_\theta(\kappa,\theta)+\\&+\left. \left(4\beta^2-\frac{4\beta}{3}\kappa^2\cos^4\theta\right)f_\kappa(\kappa,\theta)+o_{r,R}(1)\right], 
\end{align*}
Notice that the remainders of the expansion of $\tanh$ and $(f\circ g)$ are continuous functions of $(\kappa,\theta)$ on the compact set $[r,R]\times[-{\pi\over 2},{\pi\over 2}]$, so $o_{r,R}(1)$ can be uniformly dominated by a term of order $o(1)$. 
\end{proof}

\subsubsection{Existence and uniqueness of limit process}
In this paragraph we prove the well-posedness of the SDE \eqref{eqlimite}. 
\begin{lemma}\label{existenceanduni}
There exists a unique solution of the limiting equation \eqref{eqlimite}.
\end{lemma}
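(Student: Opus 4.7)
The plan is to combine the Yamada--Watanabe pathwise uniqueness criterion with a truncation argument and a Lyapunov estimate controlling the explosion time. The SDE is one-dimensional, with diffusion coefficient $\sigma(\kappa)=2\beta\sqrt{2\kappa}$ defined on $[0,\infty)$ which is H\"older of exponent $1/2$ but not Lipschitz, while the drift $b(\kappa)=4\beta^{2}-(\beta/2)\kappa^{2}$ is only locally Lipschitz. These two features rule out any textbook global Lipschitz theorem and force a localization approach.

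For pathwise uniqueness I would observe that $|\sigma(x)-\sigma(y)|^{2}\le 8\beta^{2}|x-y|$ on $[0,\infty)$, so the modulus $h(u)=2\beta\sqrt{2u}$ satisfies the Yamada--Watanabe condition $\int_{0+}h^{-2}(u)\,du=+\infty$. Since $b$ is Lipschitz on each bounded interval $[0,R]$, Yamada--Watanabe yields pathwise uniqueness for the SDE stopped at $\tau_{R}=\inf\{t:\kappa(t)\notin[0,R]\}$. Local strong existence on $[0,\tau_{R}]$ is obtained by replacing $b$ with a bounded Lipschitz truncation $b_{R}$ that agrees with $b$ on $[0,R]$, invoking Skorokhod's weak existence theorem for continuous coefficients of at most linear growth, and upgrading weak to strong existence via pathwise uniqueness. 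Consistency of the family $(\kappa^{R})_{R}$ on their respective stopping intervals lets me glue these local solutions into a process defined up to the explosion time $\tau_{\infty}:=\lim_{R\to\infty}\tau_{R}$.

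Non-explosion is the only real technicality and is delivered by the Lyapunov function $V(\kappa)=1+\kappa^{2}$. A direct computation shows that applying the generator of the SDE to $V$ gives $16\beta^{2}\kappa-\beta\kappa^{3}\le 16\beta^{2}V(\kappa)$ (the dissipative cubic term only helps), so It\^o's formula and Gr\"onwall's inequality yield $\mathbb{E}\bigl[V(\kappa(t\wedge\tau_{R}))\bigr]\le V(\kappa(0))\,e^{16\beta^{2}t}$. Since $V\ge R^{2}$ on $\{\tau_{R}\le t\}$, Markov's inequality forces $P(\tau_{R}\le t)\to 0$ as $R\to\infty$, hence $\tau_{\infty}=+\infty$ almost surely and the solution is well defined on the whole of $[0,T]$.

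Finally, to ensure that $\sqrt{\kappa}$ is unambiguous I would verify non-negativity of $\kappa$: setting $X=\kappa/(2\beta^{2})$, the equation stripped of the quadratic drift reads $dX=2\,dt+2\sqrt{X}\,dB$, i.e.\ a squared Bessel process of dimension $\delta=2$, which stays strictly positive from any positive initial condition. The extra drift $-(\beta/2)\kappa^{2}$ vanishes at $\kappa=0$ and points inward, so a standard one-dimensional stochastic comparison preserves positivity. The main conceptual obstacle is precisely the interplay between the non-Lipschitz diffusion and the super-linear drift; it resolves here because the quadratic drift is dissipative, which is exactly what makes both the Lyapunov bound and the comparison argument go through.
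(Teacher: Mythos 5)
Your argument is correct in outline but follows a genuinely different route from the paper. The paper constructs the solution explicitly: it takes the two-dimensional dissipative diffusion $dX=-\frac{\beta^3}{2}X(X^2+Y^2)\,dt+dB_1$, $dY=-\frac{\beta^3}{2}Y(X^2+Y^2)\,dt+dB_2$, sets $\kappa=2\beta^2(X^2+Y^2)$, and checks by It\^o's formula that $\kappa$ solves \eqref{eqlimite}; global existence, non-explosion and strict positivity then all come for free from the two-dimensional picture (a planar diffusion absolutely continuous with respect to planar Brownian motion never visits the origin), while uniqueness is obtained only on $[r,R]$, where the coefficients are Lipschitz, combined with $P(\tau_{r,R}\le T)\to 0$. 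Your route (Yamada--Watanabe for pathwise uniqueness, truncation plus weak existence for local strong existence, a Lyapunov bound for non-explosion) is intrinsically one-dimensional and arguably cleaner on the uniqueness side, since Yamada--Watanabe handles the square-root degeneracy at $0$ directly rather than by excising a neighbourhood of the origin; the paper's construction, on the other hand, produces the solution in the closed form \eqref{ksolution}, which is what the localization machinery of Proposition \ref{killedprocess} actually reuses.

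The one step you must tighten is positivity. As written, your comparison goes the wrong way: dropping the term $-\frac{\beta}{2}\kappa^2$ \emph{increases} the drift, so the resulting squared Bessel process of dimension $2$ dominates $\kappa$ from above, and its strict positivity says nothing about $\kappa$. Describing the extra drift as ``pointing inward'' is also misleading: it is nonpositive, hence pushes $\kappa$ toward $0$, not away from it. The claim is nevertheless true and can be repaired. For instance, on $[0,\tau_R]$ one has $4\beta^2-\frac{\beta}{2}\kappa^2\ge 4\beta^2-\frac{\beta R}{2}\kappa$, so $\kappa$ dominates a CIR process with $a=4\beta^2$ and $\sigma^2=8\beta^2$, for which the Feller condition $2a\ge\sigma^2$ holds (with equality, i.e.\ the critical dimension-$2$ case, which still does not reach $0$); alternatively, Feller's boundary test applied directly to \eqref{eqlimite} shows that $0$ is inaccessible. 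Note finally that the initial condition $\kappa(0)=\frac{\beta}{\beta-1}\bar{\lambda}^2$ is strictly positive precisely because (\textbf{H1}) requires $\bar{\lambda}\neq 0$, which is what both your argument and the paper's need to get started.
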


\begin{proof}
Let $(X(t),Y(t))_{t\geq 0}$ be the unique and globally existent solution of \begin{equation}
\label{xy} \begin{cases} dX(t)=-\frac{\beta^3}{2}X(t)(X^2(t)+Y^2(t))dt+dB_1(t) \\ dY(t)=-\frac{\beta^3}{2}Y(t)(X^2(t)+Y^2(t))dt+dB_2(t)  \end{cases}
\end{equation} with $B_1,B_2$ independent Brownian motion and $X(0)=Y(0)={\bar{\lambda}\over 2\sqrt{\beta(\beta-1)}}$. Now, let us define \begin{equation}
\label{ksolution}\kappa(t\wedge\tau_{r,R})=2\beta^2(X^2(t\wedge\tau_{r,R})+Y^2(t\wedge\tau_{r,R}))
\end{equation} with $$\tau_{r,R}:=\inf_{t\in[0,T]} \{t\geq 0 \:|\: 2\beta^2(X^2(t)+Y^2(t))\not\in \: ],r,R[\}.$$
Applying Ito's formula to \eqref{ksolution} for $t\in[0,T\wedge\tau_{r,R}]$: $$d\kappa(t)=\left(4\beta^2-\frac{\beta}{2}\kappa^2(t)\right)dt+2\beta\sqrt{2\kappa(t)}\left( \frac{X(t)}{\sqrt{X^2(t)+Y^2(t)}}dB_1(t)+\frac{Y(t)}{\sqrt{X^2(t)+Y^2(t)}}dB_2(t)  \right).$$
Notice that, for $t\in[0,T\wedge\tau_{r,R}]$, the process $$\int_0^t \frac{X(s)}{\sqrt{X^2(s)+Y^2(s)}}dB_1(s)+\frac{Y(s)}{\sqrt{X^2(s)+Y^2(s)}}dB_2(s)$$ is a zero-mean martingale with quadratic variation $dt$, hence it is a Brownian motion. Therefore, the process $\kappa(t)$ defined in \eqref{ksolution}, is a solution of \eqref{eqlimite} on the time interval $[0,T\wedge\tau_{r,R}]$ and existence is proved. On the same time interval, also uniqueness holds for the solution of \eqref{eqlimite} since the coefficients $$b(k)=4\beta^2-\frac{\beta}{2}\kappa, \:\:\:\:\:\: \sigma(\kappa)=2\beta\sqrt{2\kappa}$$ are Lipschitz-continuous functions over $[r,R]$.\\
To conclude the proof, we want to show that \begin{equation} \label{zeroprobabilitystoppingtimes} P(\tau_{r,R}\leq T)\to 0, \:\:\:\: as\:\:r\to0^+,R\to+\infty.\end{equation}
By the global existence of $(X(t),Y(t))_{t\geq 0}$,
 $$P\left(\sup_{t\in[0,T]} 2\beta^2(X^2(t)+Y^2(t))\geq R\right)\to 0$$
  as $R\to \infty.$ On the other hand, take a sequence $(r_n)_{n\geq 1}$ of positive numbers converging monotonically to zero. For any $n\geq 1$, define the event $A_n$ as $$A_n:=\left\{\inf_{t\in[0,T]} 2\beta^2(X^2(t)+Y^2(t))\leq r_n \right\} $$and notice that $(A_n)_{n\geq 1}$ is a decreasing sequence of events converging to $$\bar{A}:=\bigcap_n A_n = \left\{ \inf_{t\in[0,T]} 2\beta^2(X^2(t)+Y^2(t))=0 \right\}=\Big\{ \: \exists\: t\in[0,T] \: \textrm{s.t.}\: X(t)=Y(t)=0\Big\}.$$  Finally, recall that $(X(t),Y(t))$, being a bidimensional diffusion, is absolutely continuous with respect to a bidimensional Brownian motion: it's easy to see that a bidimensional Brownian motion never visits the origin a.s., and we conclude by $$P\Big( \: \exists\: t\in[0,T] \: \textrm{s.t.}\: X(t)=Y(t)=0\Big)=0.$$
\end{proof}

\subsection{Tightness of the stopped process}

In this section, we want to prove that the sequence of stopped processes $(\kappa_N(t\wedge\tau^N_{r,R}))_{N\geq 1}$ defined in Lemma \ref{HN} is tight.

\begin{remark}\label{stoppedmeasure}
Let $P_N$ be the law of $(\kappa_N(t))_{t\in[0,T]}$ on $\mathcal{D}([0,T],\mathbb{R})$, endowed with the Skorohod topology. For any $x\in \mathcal{D}([0,T],\mathbb{R})$ and any $r,R>0$ consider $$\tau_{r,R}:=\inf\{t\in[0,T]\:|\: x(t)\not\in\: ]r,R[\}$$ and define the map $\varphi_{r,R}:\mathcal{D}([0,T],\mathbb{R})\to\mathcal{D}([0,T],\mathbb{R})$ as \begin{equation} \label{phi}\varphi_{r,R}\left(x(\cdot)\right)=x(\cdot\wedge \tau_{r,R}).\end{equation}
Formally speaking, "the sequence of stopped processes $(\kappa_N(t\wedge\tau^N_{r,R}))_{N\geq 1}$  is tight" means "the sequence of probability measures $(P_N\circ\varphi^{-1}_{r,R})_{N\geq 1}$ is tight".
\end{remark}

\begin{proposition}\label{tight}
For any $N\geq 1$, $t\in [0,T]$ let $\kappa_N(t\wedge\tau^N_{r,R})$ be the process defined in Lemma \ref{HN}. Then, the sequence of processes $(\kappa_N(t\wedge\tau^N_{r,R}))_{N\geq 1}$ is tight.
\end{proposition}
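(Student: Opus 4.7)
The plan is to apply Aldous' tightness criterion on the Skorohod space $\mathcal{D}([0,T],\mathbb{R})$ to the stopped process $Y_N(t):=\kappa_N(t\wedge\tau^N_{r,R})$. Two ingredients are needed: compact containment and uniform-in-$N$ control on the size of small-time increments at stopping times. Compact containment is immediate: $Y_N$ lies in $]r,R[$ before $\tau^N_{r,R}$, is frozen afterwards, and the jumps of $\kappa_N$ are of order $N^{-3/4}$ (since $z_N$ jumps by $\pm 2\beta/N^{3/4}$ while $u_N$ is constant between jumps), so for $N$ sufficiently large $Y_N$ takes values in the fixed compact $[0,R+1]$.

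The heart of the argument is the Dynkin martingale decomposition derived from Lemma~\ref{HN}. Applied to the test function $f(\kappa,\theta)=\kappa$, all derivatives vanish except $f_\kappa=1$, so the $N^{1/2}$-large term of $H_N^{r,R}$, which multiplies $f_\theta$, drops out, leaving
\[
H_N^{r,R}(\mathrm{id})(\kappa,\theta)=\mathbbm{1}_{]r,R[}(\kappa)\Bigl[4\beta^2-\tfrac{4\beta}{3}\kappa^2\cos^4\theta+o_{r,R}(1)\Bigr],
\]
bounded by a constant $C_1=C_1(r,R,\beta)$ uniformly in $N$. Intuitively, $\kappa$ is the slow variable, so the fast rotation does not feed into its generator at leading order. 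Hence $M_N(t):=Y_N(t)-\kappa_N(0)-\int_0^{t\wedge\tau^N_{r,R}}H_N^{r,R}(\mathrm{id})(\kappa_N(s),\theta_N(s))\,ds$ is a martingale whose drift part is Lipschitz in $t$ with constant $C_1$. The same expansion applied to $f(\kappa,\theta)=\kappa^2$ (only $f_\kappa=2\kappa$ and $f_{\kappa\kappa}=2$ are nonzero, so again the $N^{1/2}$-term drops out) gives a uniformly bounded carré-du-champ
\[
\Gamma_N(\mathrm{id})=H_N^{r,R}(\mathrm{id}^2)-2\kappa\,H_N^{r,R}(\mathrm{id}),
\]
so the predictable quadratic variation $\langle M_N\rangle$ has density at most some $C_2=C_2(r,R,\beta)$.

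The conclusion follows at once. By optional sampling applied to $M_N^2-\langle M_N\rangle$, for every stopping time $\sigma\le T$ and $\delta>0$ one has $E[(M_N((\sigma+\delta)\wedge T)-M_N(\sigma))^2]\le C_2\,\delta$, while the drift increment over $[\sigma,\sigma+\delta]$ is bounded deterministically by $C_1\delta$. Markov's inequality then yields
\[
\sup_{\sigma}P\bigl(|Y_N((\sigma+\delta)\wedge T)-Y_N(\sigma)|>\varepsilon\bigr)\le \varepsilon^{-2}\bigl(C_2\,\delta+C_1^2\,\delta^2\bigr)
\]
uniformly in $N$, which is exactly Aldous' condition.

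The step requiring the most attention is verifying that the uniformity of the $o_{r,R}(1)$ remainder from Lemma~\ref{HN} genuinely produces $N$-independent constants $C_1,C_2$: this is guaranteed because the Taylor remainders involved are continuous functions of $(\kappa,\theta)$ on the compact set $[r,R]\times[-\pi/2,\pi/2]$, as recorded in the lemma. The apparent singularity of the polar generator at $\kappa=0$ is what forces the lower cutoff $r>0$, but it sits entirely in the $\theta$-coefficients and so never enters the analysis when testing against functions of $\kappa$ alone.
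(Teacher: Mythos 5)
Your proposal is correct and follows essentially the same route as the paper: Aldous' criterion, the Dynkin decomposition of $\kappa_N(t\wedge\tau^N_{r,R})$ into a drift bounded via $H_N^{r,R}$ applied to $f(\kappa,\theta)=\kappa$ (where the $N^{1/2}$ term indeed drops out) plus a martingale controlled through its second moment and Chebyshev. The only difference is cosmetic: you bound $\langle M_N\rangle$ via the carr\'e-du-champ $H_N^{r,R}(\kappa^2)-2\kappa H_N^{r,R}(\kappa)$, whereas the paper computes the same quantity directly by the It\^o isometry for the compensated point process driving the jumps of $z_N^2$.
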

\begin{proof}
First of all, for $j\in\mathcal{S}$, $t>0$, consider the set $A_N(j,t)$ of spins equal to $j$ at time $t$ (see also \eqref{AN}): we have \begin{equation}
|A_N(j,t)|=\frac{N(1+jm_N(t))}{2}.
\end{equation}
We can write the jump part of the process $z_N(t)$ in the following way, for $t\in[0,T]$: \begin{equation}
\label{zjump} \int_0^t \sum_{j\in\mathcal{S}} \left[  - \frac{j2\beta }{N^{3\over 4}} \right]\Lambda_N(j,ds),
\end{equation} where $\Lambda(j,ds)$ indicates a point process of intensity $$N^{1\over 2} |A_N(j,N^{1\over 2}s)|(1-j\tanh\lambda_N(N^{1\over 2}s)) dt.$$ From \eqref{lambda} and \eqref{u}, it is easy to see that $u_N(t)$ has continuous trajectories, so for any $t\in[0,T]$, $$\kappa_N(t)-\kappa_N(t-)=z_N^2(t)-z_N^2(t-).$$ 
Let us study the jumps of the process $z_N^2(t)$: Using the generalized It\^{o}'s formula (see \cite{IW81}),  we can write the jump part of $z_N^2(t)$ as
 \begin{equation}
\label{kjump}\int_0^t\sum_{j\in\mathcal{S}} \left[ \left(z_N(s) - \frac{j2\beta }{N^{3\over 4}}\right)^2-(z_N(s))^2 \right]\Lambda_N(j,dt)=\int_0^t \sum_{j\in\mathcal{S}} \left[ \frac{4\beta^2}{N^{3\over 2}} - \frac{j2\beta z_N(s)}{N^{3\over 4}} \right]\Lambda_N(j,dt).
\end{equation}
The stopped process $\kappa_N(t\wedge\tau^N_{r,R})$ can be decomposed in the following way: \begin{equation}
\label{decomposedK} \kappa_N(t\wedge\tau^N_{r,R})=\kappa_N(0)+\int_0^{t\wedge\tau^N_{r,R}}H_N^{r,R} \kappa_N(s) ds + \mathcal{M}^{t\wedge\tau^N_{r,R}}_{N,\kappa}
\end{equation}
where $H_N^{r,R}\kappa$ indicates the infinitesimal generator of Lemma \ref{HN} evaluated on the function $f(\kappa,\theta)=\kappa$, while $\mathcal{M}_{N,\kappa}$ is the local martingale given by \begin{equation}
\label{localmart} \int_0^{t\wedge\tau^N_{r,R}} \sum_{j\in\mathcal{S}}\left[ \frac{4\beta^2}{N^{3\over 2}} - \frac{j2\beta z_N(s)}{N^{3\over 4}} \right]\tilde{\Lambda}_N(j,ds).
\end{equation}
We will use the \textit{Aldous' tightness criterion} (see \cite{CE88}): a sequence of processes $\{\xi_N(t)\}_{N\geq 1}$ is tight if: \begin{enumerate}
\item for every $\varepsilon>0$ there exists $M>0$ such that \begin{equation}
\label{tight1} \sup_N P\left(\sup_{t\in[0,T]}|\xi_N(t)|\geq M\right)\leq \varepsilon,
\end{equation}
\item for every $\varepsilon>0$ and $\alpha>0$ there exists $\delta>0$ such that \begin{equation}
\label{tight2} \sup_N\sup_{0\leq\tau_1\leq\tau_2\leq(\tau_1+\delta)\wedge T} P\left( |\xi_N(\tau_2)-\xi_N(\tau_1)|\geq \alpha \right) \leq \varepsilon,
\end{equation} where the second sup is taken over stopping times $\tau_1$ and $\tau_2$, adapted to the filtration generated by process $\xi_N$.
\end{enumerate}
Notice that, for any $t\in[0,T]$, $|\kappa_N(t)|=\kappa_N(t)$ and $\kappa_N(t\wedge\tau^N_{r,R})\leq R$ hence \eqref{tight1} trivially holds.
Let now $\tau_1$, $\tau_2$ be two stopping times adapted to the the filtration generated by $\kappa_N$, such that, for $\delta>0$, $\tau_1\leq \tau_2\leq (\tau_1+\delta)\wedge T$ a.s.. By decomposition \eqref{decomposedK}, we have that 
$$\left|\kappa_N(\tau_2\wedge\tau^N_{r,R}) - \kappa_N(\tau_1\wedge\tau^N_{r,R})\right|=\left| \int_{\tau_1\wedge\tau^N_{r,R}}^{\tau_2\wedge\tau^N_{r,R}}H_N^{r,R}\kappa_N(s)ds + \mathcal{M}^{(\tau_1,\tau_2)\wedge\tau^N_{r,R}}_{N,\kappa} \right|\leq $$ $$\leq \int_{\tau_1\wedge\tau^N_{r,R}}^{\tau_2\wedge\tau^N_{r,R}} \left| H_N^{r,R}\kappa_N(s)\right|ds + \left| \mathcal{M}^{(\tau_1,\tau_2)\wedge\tau^N_{r,R}}_{N,\kappa} \right|$$ 
where
 $$\mathcal{M}^{(\tau_1,\tau_2)\wedge\tau^N_{r,R}}_{N,\kappa}:= \int_{\tau_1\wedge\tau^N_{r,R}}^{\tau_2\wedge\tau^N_{r,R}} \sum_{j\in\mathcal{S}} \left[ \frac{4\beta^2}{N^{3\over 2}} -\frac{j2\beta z_N(s)}{N^{3\over 4}}\right]\tilde{\Lambda}_N(j,ds).$$
 Let's give some estimations:
if $f(\kappa,\theta)=k$, by Lemma \ref{HN} $$H_N^{r,R}f(\kappa,\theta)=\mathbbm{1}_{]r,R[}(\kappa)\left[4\beta^2-\frac{4\beta}{3}\kappa^2\cos^4\theta + o_{r,R}(1)\right]$$  so
 $$ \int_{\tau_1\wedge\tau^N_{r,R}}^{\tau_2\wedge\tau^N_{r,R}} |H_N^{r,R}\kappa_N(s)|ds=\int_{\tau_1\wedge\tau^N_{r,R}}^{\tau_2\wedge\tau^N_{r,R}} \left|4\beta^2-\frac{4\beta}{3}(\kappa_N(s))^2\cos^4\theta_N(s) + o_{r,R}(1)\right|ds\leq$$ \begin{equation*}
 \leq \int_{\tau_1\wedge\tau^N_{r,R}}^{\tau_2\wedge\tau^N_{r,R}} 4\beta^2+{4\beta\over 3}R^2 + o_{r,R}(1)ds\leq \int_{\tau_1\wedge\tau^N_{r,R}}^{\tau_2\wedge\tau^N_{r,R}} 4\beta^2+{4\beta\over 3}R^2+ \frac{C_1(R)}{N^{\gamma}}ds,
\end{equation*} where $\gamma>0$ and $C_1(R)$ is a constant which depends on $R$. Then, taking $C_2(R):=4\beta^2+{4\beta\over 3}R^2+ C_1(R)$, 
$$ \int_{\tau_1\wedge\tau^N_{r,R}}^{\tau_2\wedge\tau^N_{r,R}} |H_N^{r,R}\kappa_N(s)|ds\leq C_2(R)\delta;$$
so, given $\alpha>0$ and $\delta< {\alpha\over C_2(R)}$ \begin{equation}
 \label{tau1tau2}P\left( \int_{\tau_1\wedge\tau^N_{r,R}}^{\tau_2\wedge\tau^N_{r,R}} |H_N^{r,R}\kappa_N(s)|ds\geq \alpha \right)=0.
 \end{equation}
Moreover, if we fix $t\in[0,T]$,  we can study the expected value of $\left(\mathcal{M}^{t\wedge\tau^N_{r,R}}_{N,\kappa}\right)^2$ applying the It\^{o} isometry for stochastic integrals with respect to point processes (see again \cite{IW81}): 
$$E\left[\left(\mathcal{M}^{t\wedge\tau^N_{r,R}}_{N,\kappa}\right)^2\right]=E\left[\int_0^{t\wedge\tau^N_{r,R}}\sum_{j\in\mathcal{S}}\left[\frac{4\beta^2}{N^{3\over 2}}-\frac{j2\beta z_N(s)}{N^{3\over 4}}\right]^2 N^{1\over 2}|A_N(j,N^{1\over 2}s)|(1-j\tanh\lambda_N(N^{1\over 2}s))ds\right]\leq$$ 
$$
\leq E\left[ \int_0^{t\wedge\tau^N_{r,R}} \sum_{j\in\mathcal{S}}  2\left[ \frac{16\beta^4}{N^3} + \frac{4\beta^2z_N^2(s)}{N^{3\over 2}}\right] N^{1\over 2} |A_N(j,N^{1\over 2}s)|2ds\right]\leq$$
$$\leq E\left[\int_0^{t\wedge\tau^N_{r,R}} 8 \left[ \frac{16\beta^4}{N^3}+\frac{4\beta^2 R}{N^{3\over 2}}\right] N^{1\over 2} \sup_{j\in\mathcal{S}} |A_N(j,N^{1\over 2}s)|ds\right]\leq E\left[ \int_0^{t\wedge\tau^N_{r,R}} 32\beta^2\left[\frac{4\beta^2}{N^{3\over 2}} + R\right] ds  \right].
$$
By the Optional Sampling Theorem, 
$$
 E\left[\left(\mathcal{M}^{(\tau_1,\tau_2)\wedge\tau^N_{r,R}}_{N,\kappa}\right)^2\right]=E\left[\left(\mathcal{M}^{\tau_2\wedge\tau^N_{r,R}}_{N,\kappa}\right)^2-\left(\mathcal{M}^{\tau_1\wedge\tau^N_{r,R}}_{N,\kappa}\right)^2\right]\leq  $$
  $$ \leq E\Big[ 32\beta^2(4\beta^2+R)\left[ (\tau_2\wedge\tau^N_{r,R})-(\tau_1\wedge\tau^N_{r,R})  \right]  \Big] \leq 32\beta^2(4\beta^2+R)\delta=: C_3(R)\delta,$$ which implies, by Chebyscev Inequality, that, given $\varepsilon>0$, $\alpha>0$ and $\delta<{\varepsilon\alpha^2\over C_3(R)  }$, \begin{equation}
\label{tau1tau2m} P\left( \left| \mathcal{M}_{N,\kappa}^{(\tau_1,\tau_2)\wedge\tau^N_{r,R}} \right|  \geq \alpha\right)\leq 		\frac{E\left[\left(\mathcal{M}_{N,\kappa}^{(\tau_1,\tau_2)\wedge\tau^N_{r,R}}\right)^2\right]}{ \alpha^2}	\leq {C_3(R)\delta\over \alpha^2}<\varepsilon.
\end{equation}
In conclusion, by \eqref{tau1tau2} and \eqref{tau1tau2m}, given $\varepsilon>0$ and $\alpha>0$, if we take $\delta<\min\left\{ {\alpha\over C_2(R)},{\varepsilon\alpha^2\over C_2(R)}\right\}$,
$$\sup_N \sup_{0\leq\tau_1\leq\tau_2\leq (\tau_1+\delta)\wedge T} P\Big(|\kappa_N(\tau_2\wedge\tau^N_{r,R})-\kappa_N(\tau_1\wedge\tau^N_{r,R})|\geq \alpha\Big)\leq $$ $$\sup_N \sup_{0\leq\tau_1\leq\tau_2\leq (\tau_1+\delta)\wedge T}
\left\{P\left( \int_{\tau_1\wedge\tau^N_{r,R}}^{\tau_2\wedge\tau^N_{r,R}} |H_N^{r,R}\kappa_N(s)|ds\geq \alpha \right) + P\left( \left| \mathcal{M}_{N,\kappa}^{(\tau_1,\tau_2)\wedge\tau^N_{r,R}} \right|  \geq \alpha\right)\right\} \leq \varepsilon $$ which proves \eqref{tight2}.
\end{proof}

\subsection{Averaging principle}
In the this paragraph we prove an elementary averaging principle that is built \textit{ad hoc} for our purpose.

\begin{proposition}\label{averagingproposition} Consider $\phi:\mathbb{R}\times\mathbb{R}\to\mathbb{R}$ a locally Lipschitz continuous function, $2\pi$-periodic in the second variable. Let $\{(x_N(t),y_N(t))_{t\in[0,T]}\}_{N\geq 1}$ be a family of cadlag Markov processes such that: \begin{enumerate}
\item as $N\to\infty$, $(x_N(t))_{t\in[0,T]}$ converges, in sense of weak convergence of stochastic processes, to a process $(\bar{x}(t))_{t\in[0,T]}.$ Assume also that there exists a compact set $K\subset \mathbb{R}$ such that, for any $t\in[0,T]$ and $N\geq 1$, $x_N(t)\in K$ and $\bar{x}(t)\in K$ and that condition \eqref{tight2} holds true for the sequence $(x_N(t))_{N\geq 1}$;
\item for any $\gamma>0$ there exist $h'>0$ and $ \bar{N}\geq 1,$ such that \begin{equation}
\label{etacond} \sup_{0\leq h \leq h'} E\left[ \left| y_N(t+h)-y_N(t) \right| \right] \leq\gamma
\end{equation} for any $N\geq \bar{N}$ and $t\in[0,T]$. 
\end{enumerate}  Then, for any $c>0$ and $\xi>0$, the following averaging principle holds: \begin{equation}
\label{averaging} \int_0^T  \phi\left(x_N(s), cN^\xi s + y_N(s)\right)ds \xrightarrow{weakly} \int_0^T \bar{\phi}\left(\bar{x}(s)\right)ds, \:\:\:\:\: \textrm{as} \:\: N\to\infty
\end{equation} where $\bar{\phi}$ is the averaged function defined by $$\bar{\phi}(x)=\frac{1}{2\pi}\int_0^{2\pi} \phi(x,\theta)d\theta.$$
\end{proposition}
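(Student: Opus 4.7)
The plan is to exploit the separation of time scales: over a short window the slow variable $x_N$ and the ``drift'' $y_N$ are nearly constant, while $cN^\xi s$ sweeps across many periods of $\phi(x,\cdot)$, so the $2\pi$-periodic second variable gets averaged out. Note first that since $x_N(s)\in K$ and $\phi$ is locally Lipschitz and $2\pi$-periodic in its second argument, its restriction to $K\times\mathbb{R}$ is bounded by some $\|\phi\|_\infty$ and Lipschitz in both variables with some constant $L$ (uniformly in the second argument).

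I would pick a mesh $\Delta_N\downarrow 0$ with $N^\xi\Delta_N\to\infty$, e.g.\ $\Delta_N=N^{-\xi/2}$, and partition $[0,T]$ into intervals $[t_k,t_{k+1}]$ of length $\Delta_N$. Then I decompose
\[
I_N:=\int_0^T \phi\bigl(x_N(s),cN^\xi s+y_N(s)\bigr)\,ds
= \sum_k\int_{t_k}^{t_{k+1}}\!\phi\bigl(x_N(t_k),cN^\xi s+y_N(t_k)\bigr)\,ds \;+\; R_N,
\]
where, by the Lipschitz bound on $\phi$,
\[
|R_N|\le L\sum_k\int_{t_k}^{t_{k+1}}\!\bigl(|x_N(s)-x_N(t_k)|+|y_N(s)-y_N(t_k)|\bigr)\,ds.
\]
Assumption (2) on $y_N$ makes the expectation of the $y_N$-part arbitrarily small (choose $\Delta_N<h'$ eventually, then $\gamma\to 0$ as $N\to\infty$), and the Aldous-type oscillation bound \eqref{tight2} for $x_N$ shows that the $x_N$-part tends to $0$ in probability.

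For the leading term, the substitution $u=cN^\xi s+y_N(t_k)$ gives
\[
\int_{t_k}^{t_{k+1}}\phi\bigl(x_N(t_k),cN^\xi s+y_N(t_k)\bigr)\,ds
=\frac{1}{cN^\xi}\int_{a_k}^{a_k+cN^\xi\Delta_N}\phi(x_N(t_k),u)\,du.
\]
By $2\pi$-periodicity, writing $cN^\xi\Delta_N=2\pi n_k+r_k$ with $0\le r_k<2\pi$, the integral over complete periods equals $n_k\cdot 2\pi\bar\phi(x_N(t_k))$, and the incomplete remainder contributes at most $2\pi\|\phi\|_\infty$. Therefore
\[
\int_{t_k}^{t_{k+1}}\phi(x_N(t_k),cN^\xi s+y_N(t_k))\,ds=\Delta_N\,\bar\phi(x_N(t_k))+O\bigl((cN^\xi)^{-1}\bigr),
\]
and summing over the $T/\Delta_N$ intervals yields total error $O\bigl(T/(cN^\xi\Delta_N)\bigr)\to 0$. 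Thus, modulo errors vanishing in probability,
\[
I_N=\sum_k \Delta_N\,\bar\phi(x_N(t_k))+o_{\mathbb P}(1)=\int_0^T \bar\phi\bigl(\check x_N(s)\bigr)\,ds+o_{\mathbb P}(1),
\]
where $\check x_N$ is the piecewise-constant interpolation of $x_N$ along the mesh.

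Finally I would pass to the limit in $\int_0^T\bar\phi(\check x_N(s))\,ds$. The map $x\mapsto\bar\phi(x)$ is continuous and bounded on $K$ (it is an average of a bounded continuous function), so by the Skorokhod representation theorem I may assume $x_N\to\bar x$ in the $J_1$ topology almost surely; combined with the mesh contraction $|s-t_k|\le\Delta_N\to 0$, we get $\check x_N(s)\to\bar x(s)$ for Lebesgue-almost every $s\in[0,T]$ (away from the at most countable jump set of $\bar x$), and bounded convergence finishes the job. The main delicacy I expect is precisely this step, namely controlling the joint effect of the discretization and of the possible jumps of $\bar x$ in the Skorokhod topology; the $J_1$ argument above, together with the uniform bound $\|\bar\phi\|_\infty\le\|\phi\|_\infty$, handles it cleanly, and I would similarly need to double-check that the oscillation bound for $x_N$ transfers to an $L^1$ control of the $x_N$-part of $R_N$ along stopping times if needed.
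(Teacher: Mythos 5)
Your proposal is correct and follows essentially the same route as the paper: Skorohod representation for $x_N$, a time discretization on which the slow variables $x_N$ and $y_N$ are frozen (errors controlled by the Aldous condition \eqref{tight2} and by hypothesis \eqref{etacond}), and exact averaging of the fast phase by periodicity. The only differences are cosmetic: the paper takes the mesh to be exactly one period $2\pi/(cN^{\xi})$, so each interval averages exactly to $\bar{\phi}(x_N(t_i))(t_{i+1}-t_i)$ with no incomplete-period remainder, whereas your coarser mesh $\Delta_N=N^{-\xi/2}$ incurs an $O(N^{-\xi})$ remainder per interval that you correctly show is summable to $o(1)$; your treatment of the final limit (handling possible jumps of $\bar{x}$ under $J_1$ convergence) is if anything more careful than the paper's.
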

Before proving Proposition \ref{averagingproposition}, recall the Skorohod's Representation Theorem (see \cite{B99}).
\begin{theorem}\label{skorohod}
Suppose that the sequence of probability measures $(P_N)_{N\geq 1}$ weakly converges to $P$ and $P$ has a separable support. Then there exist random elements $(X_N)_{N\geq 1}$ and $X$, defined on a common probability space $(\Omega,\mathcal{F}, \mathcal{P})$, such that $\mathcal{L}(X_N)=P_N$ for any $N\geq 1$, $\mathcal{L}(X)=P$ and $X_N(\omega)\to X(\omega)$ for every $\omega$.
\end{theorem}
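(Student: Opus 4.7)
The plan is a time-discretization with a mesh $h_N$ satisfying $h_N\to 0$ and $h_NN^\xi\to\infty$ (for instance $h_N=N^{-\xi/2}$): on each mesh interval the fast phase $cN^\xi s$ sweeps $\Theta(h_NN^\xi)\to\infty$ full periods while the slow state $x_N$ and the random drift $y_N$ remain essentially frozen. Before discretizing, I would invoke Theorem \ref{skorohod} to realize $x_N$ and $\bar{x}$ on a common probability space with $x_N\to\bar{x}$ almost surely in the Skorohod topology of $\mathcal{D}([0,T],K)$; hypothesis \eqref{tight2} together with compactness of $K$ forces the limit $\bar{x}$ to be a.s.\ continuous, so the convergence is in fact a.s.\ uniform on $[0,T]$. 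A crucial preliminary reduction exploits the $2\pi$-periodicity of $\phi$: since $\phi$ is locally Lipschitz and $K$ compact, $\phi$ is globally Lipschitz on $K\times[0,2\pi]$ with some constant $L$, and by periodicity
\[
|\phi(x,y)-\phi(x',y')|\leq L\bigl(|x-x'|+\min(|y-y'|,2\pi)\bigr)\leq L\bigl(|x-x'|+|y-y'|\bigr)
\]
for all $x,x'\in K$ and $y,y'\in\mathbb{R}$; in particular $\bar\phi$ inherits the Lipschitz constant $L$ on $K$.

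Partition $[0,T]$ into intervals $J_k=[t_k,t_{k+1}]$ of length $h_N$. I would then carry out two steps. First, the substitution error
\[
E_N:=\int_0^T\phi\bigl(x_N(s),cN^\xi s+y_N(s)\bigr)\,ds-\sum_k\int_{J_k}\phi\bigl(x_N(t_k),cN^\xi s+y_N(t_k)\bigr)\,ds
\]
tends to $0$ in $L^1$: by the Lipschitz bound above $|E_N|\leq L\sum_k\int_{J_k}\bigl(|x_N(s)-x_N(t_k)|+|y_N(s)-y_N(t_k)|\bigr)ds$, the $x$-contribution is controlled by the modulus of continuity implied by \eqref{tight2} applied to $(x_N)$, and the $y$-contribution is handled in expectation using \eqref{etacond}, which gives $E|y_N(s)-y_N(t_k)|\leq\gamma$ uniformly in $s\in J_k$ once $h_N\leq h'$ and $N\geq\bar N$. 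Second, on each interval, with $x_N(t_k)$ and $y_N(t_k)$ frozen, the change of variable $\theta=cN^\xi s+y_N(t_k)$ gives
\[
\int_{J_k}\phi\bigl(x_N(t_k),cN^\xi s+y_N(t_k)\bigr)ds=\frac{1}{cN^\xi}\int_{\theta_k}^{\theta_k+cN^\xi h_N}\phi(x_N(t_k),\theta)\,d\theta,
\]
and splitting the integration range into $\lfloor cN^\xi h_N/(2\pi)\rfloor$ full periods plus a remainder of length at most $2\pi$ yields, by $2\pi$-periodicity, $h_N\bar\phi(x_N(t_k))+O(\|\phi\|_\infty/N^\xi)$ uniformly in $k$; summed over $k$ the remainder-error is $O(T\|\phi\|_\infty/(h_NN^\xi))=o(1)$.

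What is left is the Riemann sum $\sum_k h_N\bar\phi(x_N(t_k))$, which converges almost surely on the Skorohod-coupled space to $\int_0^T\bar\phi(\bar x(s))\,ds$ by the Lipschitz continuity of $\bar\phi$, the uniform convergence $x_N\to\bar x$, and the continuity of $\bar x$. Combining the three estimates and transferring back yields the weak convergence \eqref{averaging}. The main obstacle is the $y$-contribution to the substitution error: $y_N$ is random, unbounded, and the only available control is the expected short-time increment \eqref{etacond} rather than a pathwise modulus of continuity. The $2\pi$-periodicity of $\phi$ is essential here, since it converts local Lipschitz continuity in $y$ into a global Lipschitz bound on all of $\mathbb{R}$; without it, large excursions of $y_N$ between consecutive mesh points could not be absorbed by \eqref{etacond}. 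A secondary technical point is balancing scales: $h_N$ must decay slowly enough that $h_NN^\xi\to\infty$ (killing the fractional-period error) yet fast enough that \eqref{etacond} applies with arbitrarily small $\gamma$, which is exactly what the choice $h_N=N^{-\xi/2}$ provides.
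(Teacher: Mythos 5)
There is a fundamental mismatch here: the statement you were asked to prove is Theorem \ref{skorohod}, i.e.\ the classical Skorohod representation theorem (``if $P_N\Rightarrow P$ and $P$ has separable support, the convergence can be realized almost surely on a common probability space''), which the paper does not prove at all but simply recalls with a citation to Billingsley. Your proposal does not address this statement; it is instead a proof sketch of Proposition \ref{averagingproposition} (the averaging principle), and indeed it \emph{invokes} Theorem \ref{skorohod} as a tool in its first paragraph --- so as a proof of the stated theorem it is circular and, more to the point, simply off-target. A genuine proof of Theorem \ref{skorohod} has nothing to do with meshes, fast phases, or the hypotheses \eqref{tight2} and \eqref{etacond}: the standard argument works on $(\Omega,\mathcal{F},\mathcal{P})=([0,1],\mathcal{B}([0,1]),\mathrm{Leb})$, partitions the separable support of $P$ into finitely many sets of small diameter whose boundaries are $P$-null, uses weak convergence to match the $P_N$-masses of these sets to the $P$-masses up to $o(1)$, and builds $X_N$ and $X$ by assigning subintervals of $[0,1]$ to these sets consistently so that $X_N(\omega)$ and $X(\omega)$ eventually lie in the same small-diameter cell for every $\omega$. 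None of this appears in your write-up.

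For what it is worth, judged as a blind proof of Proposition \ref{averagingproposition} your sketch is essentially sound and close in spirit to the paper's own argument; the paper uses a mesh of exactly one period, $t_{i+1}-t_i=2\pi/(cN^\xi)$, so that each cell integrates to $(t_{i+1}-t_i)\bar\phi(x_N(t_i))$ with no fractional-period remainder, whereas you take $h_N=N^{-\xi/2}$ and pay an $O(\|\phi\|_\infty/(h_NN^\xi))$ remainder per cell --- both work. One caveat even there: your claim that \eqref{tight2} together with compactness of $K$ forces the limit $\bar{x}$ to be a.s.\ continuous is not justified (Aldous's condition is a tightness criterion in $\mathcal{D}([0,T],\mathbb{R})$, not in $\mathcal{C}([0,T],\mathbb{R})$), and the Riemann-sum step should instead be handled, as in the paper, by comparing $\sum_i\bar\phi(x_N(t_i))(t_{i+1}-t_i)$ with $\int_0^T\bar\phi(x_N(s))\,ds$ via \eqref{tight2} and then passing to the limit by Skorohod coupling and dominated convergence. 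But the primary defect remains that you proved the wrong statement.
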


\begin{proof}[Proof of Proposition \ref{averagingproposition}]Let $P_N=\mathcal{L}(x_N)$, $\tilde{P}_N=\mathcal{L}((x_N,y_N))$  for any $N\geq 1$ and $P=\mathcal{L}(\bar{x})$ : then, $P_N$ and $P$ are probability measures over the set of cadlag trajectories $\mathcal{D}([0,T],\mathbb{R})$, which, endowed with the Skorohod topology, is a complete and separable metric space (see \cite{EK86}). Therefore, by the Skorohod's Representation Theorem, there exists a probability space on which are defined  $\mathcal{D}([0,T],\mathbb{R})$-valued random variables $x_N$ with distribution $P_N$, for $N\geq 1$, and $\bar{x}$ with distribution $P$ such that $x_N$ converges to $\bar{x}$ a.s.. Notice that on this common probability space (or at least enlarging it) we can also define, for any $N\geq 1$, a random variable $y_N$ such that the joint distribution of $(x_N,y_N)$ is $\tilde{P}_N$. In the following, we will prove that, on this common probability space, $$\int_0^T\phi\left(x_N(s), cN^\xi s + y_N(s)\right)ds \xrightarrow{L^1} \int_0^T \bar{\phi}\left(\bar{x}(s)\right)ds, \:\:\:\:\: \textrm{as} \:\: N\to\infty $$ which implies \eqref{averaging}.\\
First of all, we have that $$ E\left| \int_0^T \phi\left(x_N(s), cN^\xi s + y_N (s)\right)ds - \int_0^T \bar{\phi}\left(\bar{x} (s)\right)ds\right|\leq$$ \begin{equation} 
\leq E\left| \phi\left(x_N (s), cN^\xi s + y_N (s)\right)ds- \int_0^T \bar{\phi}\left(x_N (s)\right)ds\right|+ \label{b1}
\end{equation} \begin{equation} \label{b2}
+E\left|\int_{0}^T\bar{\phi}\left(x_N (s)\right)ds-\int_0^T \bar{\phi}\left(\bar{x} (s)\right)ds\right|.
\end{equation}
Notice that, since $x_N\to\bar{x}$ a.s., than also $$\int_{0}^T\bar{\phi}\left(x_N (s)\right)ds\to\int_0^T\bar{\phi}\left(\bar{x} (s)\right)ds \:\:\:\:\: a.s..$$ Since there exists a compact set such that, for any $s\in[0,T]$, $x_N(s)\in K$ and $\bar{\phi}$ is continuous, the quantities above can be uniformly dominated by a constant hence, by the Dominated Convergence Theorem, we also have convergence in $L^1$ sense so the quantity in \eqref{b2} converges to 0.\\
We have to study \eqref{b1}: for any $n\geq 1$ consider the partition $\mathcal{P}_n$ of $[0,T]$ defined as $$0=t_0<t_1<\dots<t_n\leq t_{n+1}=T$$ where $$n=\left\lfloor \frac{cN^\xi }{2\pi}T\right\rfloor,  \:\:\:\:\:\:\:\: \left| t_i-t_{i-1} \right|=\frac{2\pi}{cN^\xi}\:\:\: \forall\: i=1,\dots,n, \:\:\:\:\:\:\:\:|T-t_n|<\frac{2\pi}{cN^\xi}.$$ Then, $$E\left| \int_0^{T } \phi\left(x_N(s), cN^\xi s + y_N (s)\right)ds - \int_0^{T } \bar{\phi}\left(x_N (s)\right)ds\right|\leq$$ \begin{equation} 
\leq E\left|\sum_{i=0}^{n-1} \int_{t_i }^{t_{i+1} }\phi\left(x_N (s), cN^\xi s + y_N (s)\right)ds- \int_0^{T } \bar{\phi}\left(x_N (s)\right)ds\right|+ \label{a1}
\end{equation} \begin{equation} \label{a2}
+E\int_{t_n }^{T }\left|\phi\left(x_N (s), cN^\xi s + y_N (s)\right)\right|ds
\end{equation}
The term in $\eqref{a2}$ converge to 0: since $x_N (s)\in K$ with $K$ for all $s\in [0,T]$, and $\phi$ is continuous and periodic in its second variable, there exists $C_1>0$ independent of $N$ such that $$\max_{s\in[0,T]} \left| \phi\left(x_N (s), cN^\xi s + y_N (s)\right)\right|\leq C_1.$$ Hence, $$E\left[ \int_{t_n }^{T } \left| \phi\left(x_N (s), cN^\xi s + y_N (s)\right)\right|ds   \right]\leq C_1(T-t_n)\to 0$$ as $n\to \infty$, so we can deal with the term in \eqref{a1} only.
$$E\left|\sum_{i=0}^{n-1} \int_{t_i }^{t_{i+1} }\phi\left(x_N (s), cN^\xi s + y_N (s)\right)ds- \int_0^{T } \bar{\phi}\left(x_N (s)\right)ds\right|\leq $$
\begin{equation}
\label{a3}E \left|
\sum_{i=0}^{n-1} \int_{t_i }^{t_{i+1} } \left[ \phi\left(x_N (s), cN^\xi s + y_N (s)\right) - \phi\left(x_N (t_i), cN^\xi s + y_N (t_i)\right)    \right]ds\right|+
\end{equation} 
\begin{equation}
\label{a4}E \left|  \sum_{i=0}^{n-1} \int_{t_i }^{t_{i+1} } \phi\left(x_N (t_i), cN^\xi s + y_N (t_i)\right)ds - \int_0^{T }\bar{\phi}\left( x_N (s)  \right)ds \right|.
\end{equation}
Notice that, for any $i=0,\dots,n-1$, $$\int_{t_i}^{t_{i+1}}\phi\left(  x_N (t_i),cN^\xi s +y_N (t_i)  \right)ds=\int_{t_i}^{t_{i}+\frac{2\pi}{cN^\xi}}\phi\left(  x_N (t_i),cN^\xi s +y_N (t_i)  \right)ds\overset{\dagger}{=}$$ $$\overset{\dagger}{=} \frac{1}{cN^\xi} \int_0^{2\pi} \phi\left( x_N(t_i),\sigma + cN^\xi t_i + y_N (t_i) \right)d\sigma = \frac{2\pi}{cN^\xi}\bar{\phi}\left(x_N (t_i)\right)=\bar{\phi}\left(x_N (t_i)\right)(t_{i+1}-t_i)$$ where in $\dagger$ we used the change of variable $\sigma= cN^\xi(s-t_i)$. The quantity in \eqref{a4} can therefore be written as $$ E\left| \sum_{i=0}^{n-1} \bar{\phi}(x_N(t_i))(t_{i+1}-t_i)-\int_0^T \bar{\phi}(x_N(s))ds\right| $$ which clearly converges to 0 as $n\to\infty$. \\ So we are only left to prove convergence of the term in \eqref{a3}:
$$ E\left|
\sum_{i=0}^{n-1} \int_{t_i}^{t_{i+1}} \left[ \phi\left(x_N (s), cN^\xi s + y_N (s)\right) - \phi\left(x_N (t_i), cN^\xi s + y_N (t_i)\right)    \right]ds\right| \leq $$
\begin{equation}
\label{a5}= \sum_{i=0}^{n-1} \int_{t_i}^{t_{i+1}} E\left| \phi\left(x_N (s), cN^\xi s + y_N (s)\right) - \phi\left(x_N (t_i), cN^\xi s + y_N (s)\right)    \right|ds+
\end{equation} \begin{equation}
\label{a6} +\sum_{i=0}^{n-1} \int_{t_i}^{t_{i+1}} E\left| \phi\left(x_N (t_i), cN^\xi s + y_N (s)\right) - \phi\left(x_N (t_i), cN^\xi s + y_N (t_i)\right)    \right|ds .
\end{equation}
Notice that the function  if $\phi$ is locally Lipschitz continuous and periodic in the second variable, then it is also locally Lipschitz continuous in the first variable uniformly in the second variable. Since $x_N(s)\in K$ for all $s\in[0,T]$, there exists $L_K>0$ such that $$\sum_{i=0}^{n-1} \int_{t_i}^{t_{i+1}}E \left| \phi\left(x_N (s), cN^\xi s + y_N (s)\right) - \phi\left(x_N (t_i), cN^\xi s + y_N (s)\right)    \right|ds\leq$$ $$\leq L_K \sum_{i=0}^{n-1}\int_{t_i}^{t_{i+1}} E|x_N(s)-x_N(t_i)|ds.$$ Since \eqref{tight2} holds for the sequence $(x_N)_{N\geq 1}$, for any $\varepsilon,\alpha>0$ there exists $\delta >0$ such that $$ \sup_N\sup_{0\leq\tau_1\leq\tau_2\leq(\tau_1+\delta)\wedge T} P\left( |x_N(\tau_2)-x_N(\tau_1)|\geq \alpha \right) \leq \varepsilon.$$
Then, for any $N$ such that ${2\pi\over cN^{\xi}}<\delta$, for any $i=0,\dots,n-1$ and $s\in[t_i,t_{i+1}]$ we have $$E|x_N(s)-x_N(t_i)|\leq $$ $$\leq E\left[ |x_N(s)-x_N(t_i)|\mathbbm{1}_{ \{|x_N(s)-x_N(t_i)|\geq\alpha\} }\right]+E\left[ |x_N(s)-x_N(t_i)|\mathbbm{1}_{\{|x_N(s)-x_N(t_i)|<\alpha\}}\right]\leq$$ $$\leq \max_{x,y\in K} |x-y|P(|x_N(s)-x_N(t_i)|\geq\alpha)+\alpha\leq  \bar{C} \varepsilon+\alpha $$ where $\bar{C}:=\max_{x,y\in K}|x-y|.$ So, for any $\varepsilon,\alpha>0$ and $N$ large enough, $$\sum_{i=0}^{n-1} \int_{t_i}^{t_{i+1}}E \left| \phi\left(x_N (s), cN^\xi s + y_N (s)\right) - \phi\left(x_N (t_i), cN^\xi s + y_N (s)\right)    \right|ds\leq$$ $$\leq L_K \sum_{i=0}^{n-1}\int_{t_i}^{t_{t+1}} \bar{C}\varepsilon+\alpha ds\leq L_K(\bar{C}\varepsilon+\alpha)\left\lfloor \frac{cN^\xi}{2\pi}T \right\rfloor\frac{2\pi}{cN^{\xi}}\leq L_K(\bar{C}\varepsilon+\alpha)T.$$
Therefore, the quantity in \eqref{a5} converges to 0.\\
Finally, we are left with \eqref{a6}: for any $\gamma>0$ and $N$ large enough, $$\sum_{i=0}^{n-1} \int_{t_i}^{t_{i+1}} E\left| \phi\left(x_N (t_i), cN^\xi s + y_N (s)\right) - \phi\left(x_N (t_i), cN^\xi s + y_N (t_i)\right)    \right|ds \leq$$
$$ \leq L_K \sum_{i=0}^{n-1} \int_{t_i}^{t_{i+1}} E\left|y_N (s)-y_N (t_i)\right| ds\leq $$  $$\leq L_K \sum_{i=0}^{n-1} \int_{t_i}^{t_{i+1}} \sup_{0\leq h\leq \frac{2\pi}{cN^\xi}} E\left|y_N (t_i+h)-y_N (t_i)\right| ds\leq$$ $$\leq L_K \sum_{i=0}^{n-1} \int_{t_i}^{t_{i+1}} \gamma ds \leq \left\lfloor \frac{cN^\xi}{2\pi} T \right\rfloor \frac{2\pi}{cN^\xi}L_K\gamma \leq L_KT\gamma   $$
where we used Lipschitz continuity and condition \eqref{etacond}. Since $\gamma$ is arbitrary, the proof is completed.
\end{proof}
\subsection{Analysis of the process $\theta_N(t)$}
Consider again the process $(\kappa_N(t),\theta_N(t))_{t\in[0,T]}$ defined in Lemma \ref{HN} and define the process $(\eta_N(t))_{t\in[0,T]}$ by \begin{equation}
\label{etadef} \eta_N(t):=\theta_N(t)-N^{1\over 2}2\sqrt{\beta-1}t.
\end{equation} The following result proves that, given the stopping time $$\tau^N_{r,R}:=\inf\{t\in[0,T]\:|\: \kappa_N(t)\not\in\:]r,R[ \}, $$ the stopped process $\eta_N(t\wedge\tau^N_{r,R})$ satisfies (a stronger version of) condition \eqref{etacond}.

\begin{proposition} \label{etavera}
Let $\eta_N(t)$ be the Markov process defined by \eqref{etadef}. Then, for $h'>0$, there exist $C>0$ and $\bar{N}\geq 1$ such that \begin{equation}
\label{etavera2} \sup_{0\leq h \leq h'} E\Big[ \left| \eta_N((t+h)\wedge\tau^N_{r,R})- \eta_N(t\wedge\tau^N_{r,R}) \right| \Big]\leq C\sqrt{h'}
\end{equation} for all $N\geq \bar{N}.$
\end{proposition}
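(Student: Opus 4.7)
\textbf{Proof plan for Proposition \ref{etavera}.} The strategy is to split $\eta_N$ via the Dynkin decomposition of $\theta_N$, with the key observation that the subtraction $N^{1/2}2\sqrt{\beta-1}\,t$ exactly cancels the divergent drift of $\theta_N$ identified in Lemma \ref{HN}. Writing $\tau := \tau^N_{r,R}$ for brevity, applying Lemma \ref{HN} with $f(\kappa,\theta)=\theta$ gives the decomposition
\begin{equation*}
\theta_N(t\wedge\tau)=\theta_N(0)+\int_0^{t\wedge\tau}H_N^{r,R}\theta(\kappa_N(s),\theta_N(s))\,ds+M^\theta_N(t\wedge\tau),
\end{equation*}
with $M^\theta_N$ a local martingale. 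Subtracting the deterministic term $N^{1/2}2\sqrt{\beta-1}(t\wedge\tau)$ and using that on $\{s<\tau\}$ the variable $\kappa_N(s)$ lies in $[r,R]$, the expression for $H_N^{r,R}\theta$ in Lemma \ref{HN} shows that
\begin{equation*}
D_N(s):=H_N^{r,R}\theta(\kappa_N(s),\theta_N(s))-N^{1/2}2\sqrt{\beta-1}=\frac{4\beta^2\cos\theta_N(s)\sin\theta_N(s)}{\kappa_N(s)}+\frac{2\beta}{3}\kappa_N(s)\cos^3\theta_N(s)\sin\theta_N(s)+2\sqrt{\beta-1}\sin\theta_N(s)+o_{r,R}(1)
\end{equation*}
is uniformly bounded by some constant $C_1(r,R)$ for $N\geq\bar N$. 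Hence the drift contribution to $|\eta_N((t+h)\wedge\tau)-\eta_N(t\wedge\tau)|$ is at most $C_1(r,R)\,h'$.

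The harder step is the martingale estimate. I would exploit the crucial algebraic cancellation: a flip $\sigma_i\to-\sigma_i$ produces $\Delta\hat m_N=-2\sigma_i/N^{3/4}$ and $\Delta\hat\lambda_N=-2\beta\sigma_i/N^{3/4}$, so by \eqref{u}
\begin{equation*}
\Delta u_N=\frac{\beta\Delta\hat m_N-\Delta\hat\lambda_N}{\sqrt{\beta-1}}=0.
\end{equation*}
Thus $u_N$ has no jumps, and all jumps of $\theta_N=\arctan(u_N/z_N)$ come from jumps of $z_N=\hat\lambda_N$ of size $\pm 2\beta/N^{3/4}$. A second-order Taylor expansion of $\arctan$ in $\Delta z$, combined with $\kappa_N\geq r$ on the stopped interval, yields
\begin{equation*}
|\Delta\theta_N|\leq\frac{|u_N|}{\kappa_N}\cdot\frac{2\beta}{N^{3/4}}+\frac{C'(r)}{N^{3/2}}\leq\frac{C(r,R)}{N^{3/4}}.
\end{equation*}

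Combining this jump-size bound with the jump intensity $N^{1/2}|A_N(j,N^{1/2}s)|(1\mp\tanh\lambda_N(N^{1/2}s))\leq 2N^{3/2}$ already used in the proof of Proposition \ref{tight}, the It\^o isometry for stochastic integrals against compensated point processes gives, in analogy with the bound \eqref{tau1tau2m} derived there,
\begin{equation*}
E\bigl[(M^\theta_N((t+h)\wedge\tau)-M^\theta_N(t\wedge\tau))^2\bigr]\leq\int_{t\wedge\tau}^{(t+h)\wedge\tau}\Bigl(\frac{C(r,R)}{N^{3/4}}\Bigr)^{\!2}\cdot 2N^{3/2}\,ds\leq C_2(r,R)\,h',
\end{equation*}
after applying the Optional Sampling Theorem as in Proposition \ref{tight}. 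Cauchy--Schwarz then yields $E|M^\theta_N((t+h)\wedge\tau)-M^\theta_N(t\wedge\tau)|\leq\sqrt{C_2(r,R)\,h'}$.

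Putting the two estimates together,
\begin{equation*}
E|\eta_N((t+h)\wedge\tau)-\eta_N(t\wedge\tau)|\leq C_1(r,R)\,h'+\sqrt{C_2(r,R)}\sqrt{h'}\leq C\sqrt{h'},
\end{equation*}
uniformly in $0\leq h\leq h'$ and in $N\geq\bar N$, for $h'$ in a bounded range. The main obstacle, and the only place where the specific structure of the model enters nontrivially, is the martingale variance estimate: without the cancellation $\Delta u_N=0$, the jump size of $\theta_N$ would be $O(N^{-1/2})$ instead of $O(N^{-3/4})$, and the quadratic-variation rate would diverge with $N$, rendering the desired $\sqrt{h'}$ bound false.
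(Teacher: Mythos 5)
Your argument is correct and follows essentially the same route as the paper's proof: the same drift--martingale decomposition in which the $N^{1/2}2\sqrt{\beta-1}\,t$ term cancels the divergent part of the drift of $\theta_N$, the same crucial observation that $u_N$ has no jumps so that $\theta_N$ jumps only through $z_N$ by $O(N^{-3/4})$, and the same It\^o-isometry plus Cauchy--Schwarz estimate for the martingale increment. The only (inessential) difference is the device used to bound $|\Delta\theta_N|$: you Taylor-expand the angle in $z$ using $|\partial_z\arctan(u/z)|=|u|/\kappa\le 1/\sqrt{r}$ on the stopped region, whereas the paper uses the global Lipschitz bound for $\arctan$ combined with a two-case analysis on the size of $|z_N(\tau)z_N(\tau-)|$; both yield the same $O_{r,R}(N^{-3/4})$ jump bound and hence the same $\sqrt{h'}$ estimate.
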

\begin{proof}
Notice that $(\kappa_N(t),\eta_N(t))$ is a time-dependent invertible transformation of $(z_N(t),u_N(t))$: so, $(\kappa_N(t),\eta_N(t))$ is itself a (time inhomogeneous) Markov process. We want to find an expression for its generator $J_{N,t}$. Actually, in order to overcome time-dependence, we let time play the role of additional variable: let $(y_N(t))_{t\in[0,T]}$ be the process $$dy_N(t)=N^{1\over 2} 2\sqrt{\beta-1}dt$$ and consider the transformation $$\kappa_N(t)=z^2_N(t)+u^2_N(t),$$
$$\eta_N(t)=\arctan\left(\frac{u_N(t)}{z_N(t)}\right)-y_N(t),$$ $$\xi_N(t)=y_N(t).$$
Recall generator $G_N$ introduced in Lemma \ref{gn}: then, infinitesimal generator $\bar{G}_N$ associated with $(z_N(t),u_N(t),y_N(t))$ is given by $$\bar{G}_N f(z,u,y)=G_Nf(z,u)+N^{1\over 2}2\sqrt{\beta-1}f_y.$$ Let $g$ be the function $$g(z,u,y)=\left( z^2+u^2,\arctan\left({u\over z}\right) - y,y\right)$$ and compute $\bar{G}_N(f\circ g)(z,u,y)$, with $f\in\mathcal{C}^3_b([r,R]\times\mathbb{R}^2)$. Very similar computations to the one performed in the proof of Lemma \ref{HN} yield us \begin{align*}
\bar{G}_N(f\circ g)(z,u,y)&=2\beta^2(f\circ g)_{zz} - \left( \frac{2\beta z^3}{3} + N^{1\over 2} 2\beta \sqrt{\beta-1}u \right)(f\circ g)_z +\\ &+ N^{1\over 2} 2\sqrt{\beta-1}z(f\circ g)_u + N^{1\over 2}2\sqrt{\beta-1}f_y + o(1).
\end{align*}
Observe that: $$(f\circ g)_z=2z f_\kappa - \frac{u}{z^2+u^2} f_\eta,\:\:\:\:\:\:\: (f\circ g)_u=2uf_\kappa + \frac{z}{z^2+u^2}f_\eta,\:\:\:\:\:\:\:(f\circ g)_y=-f_\eta+f_\xi,$$
$$ (f\circ g)_{zz}= 4z^2f_{\kappa\kappa} - \frac{4zu}{z^2+u^2}f_{\kappa\eta} + \frac{u^2}{(z^2+u^2)^2}f_{\eta\eta} + 2f_\kappa + \frac{2uz}{(z^2+u^2)^2}f_\eta,$$
hence \begin{align*}
\bar{G}_N(f\circ g)(z,u,y)&=\:8\beta^2 z^2 f_{\kappa\kappa} - \frac{8\beta^2zu}{z^2+u^2}f_{\kappa\eta} +\frac{2\beta^2u^2}{(z^2+u^2)^2}f_{\eta\eta}+ \\ &+\left( \frac{4\beta^2zu}{(z^2+u^2)^2}+\frac{2\beta z^3u}{3(z^2+u^2)} \right)f_\eta+ \\&+ \left( 4\beta^2 -  \frac{4\beta z^4}{3}\right)f_\kappa + N^{1\over 2}2\sqrt{\beta-1}f_\xi+o(1),
\end{align*}
which finally yields, thanks to Lemma \ref{cambiovariabilegeninf}, the infinitesimal generator $J_N$ of $(\kappa_N(t),\eta_N(t),\xi_N(t))$ in the form: \begin{align*}
J_Nf(\kappa,\eta,\xi)&=8\beta^2\kappa\cos^2(\eta+\xi) f_{\kappa\kappa}  -8\beta^2\cos(\eta+\xi)\sin(\eta+\xi) f_{\kappa\eta}+ \frac{2\beta^2\sin^2(\eta+\xi)}{\kappa} f_{\eta\eta}  +\\&+\left(   \frac{4\beta^2\cos(\eta+\xi)\sin(\eta+\xi)}{\kappa}+\frac{2\beta}{3} \kappa\cos^3(\eta+\xi)\sin(\eta+\xi) +2\sqrt{\beta-1}\sin(\eta+\xi)\right)f_\eta +\\&+ \left(4\beta^2-\frac{4\beta}{3}\kappa^2\cos^4(\eta+\xi)\right)f_\kappa +N^{1\over 2}2\sqrt{\beta-1}f_\xi +o(1).
\end{align*}
So, the infinitesimal generator $J^{r,R}_N$ of the stopped process $(\kappa_N(t\wedge\tau^N_{r,R}),\eta_N(t\wedge\tau^N_{r,R}),\xi_N(t\wedge\tau^N_{r,R}))$ will satisfy for $f\in\mathcal{C}^3_b([r,R]\times\mathbb{R}^2)$:
\begin{align*}
J_N^{r,R}f(\kappa,\eta,\xi)&=\mathbbm{1}_{]r,R[}(\kappa)\left[8\beta^2\kappa\cos^2(\eta+\xi) f_{\kappa\kappa}  -8\beta^2\cos(\eta+\xi)\sin(\eta+\xi) f_{\kappa\eta}+ \frac{2\beta^2\sin^2(\eta+\xi)}{\kappa} f_{\eta\eta}  +\right.\\&+\left(   \frac{4\beta^2\cos(\eta+\xi)\sin(\eta+\xi)}{\kappa}+\frac{2\beta}{3} \kappa\cos^3(\eta+\xi)\sin(\eta+\xi) +2\sqrt{\beta-1}\sin(\eta+\xi)\right)f_\eta +\\&+\left. \left(4\beta^2-\frac{4\beta}{3}\kappa^2\cos^4(\eta+\xi)\right)f_\kappa +N^{1\over 2}2\sqrt{\beta-1}f_\xi +o_{r,R}(1)\right]
\end{align*}
where, as in Lemma \ref{HN}, one can easily check that the remainders $o_{r,R}(1)$ can be uniformly dominated by a term of order $o(1)$ on $[r,R]\times\mathbb{R}^2$.\\
We will use the decomposition \begin{equation}
\label{decomposedeta} \eta_N(t\wedge\tau^N_{r,R})=\eta_N(0)+\int_0^{t\wedge\tau^N_{r,R}} J_N^{r,R}\eta_N(s)ds + \mathcal{M}_{N,\eta}^{t\wedge\tau^N_{r,R}}
\end{equation} where $J_N^{r,R}\eta$ indicates the infinitesimal generator $J_N^{r,R}$ evaluated on the function $f(\kappa,\eta,\xi)=\eta$ and $$\mathcal{M}_{N,\eta}^{t\wedge\tau^N_{r,R}} =\int_0^{t\wedge\tau^N_{r,R}} \sum_{j\in\mathcal{S}} \Delta \eta_N(s) \tilde{\Lambda}_N(j,ds)$$ where $\tilde{\Lambda}(j,ds)$ is the same compensated point process introduced in Proposition \ref{tight} and $\Delta\eta_N(s)$ will be estimated in the following.\\
Let $h>0$ and study the quantity $$\left| \eta_N((t+h)\wedge\tau^N_{r,R})-\eta_N(t\wedge\tau^N_{r,R})\right|\leq \int_{t\wedge\tau^N_{r,R}}^{(t+h)\wedge\tau^N_{r,R}} \left|J_N^{r,R}\eta_N(s)\right|ds+\left| \mathcal{M}_{N,\eta}^{(t+h)\wedge\tau^N_{r,R}}- \mathcal{M}_{N,\eta}^{t\wedge\tau^N_{r,R}}\right|.$$
We have $$J_N^{r,R}\eta=\frac{4\beta^2\cos(\eta+\xi)\sin(\eta+\xi)}{\kappa} +\frac{2\beta}{3}\cos^3(\eta+\xi)\sin(\eta+\xi) + 2\sqrt{\beta-1}\sin(\eta+\xi)+o_{r,R}(1),$$ therefore \begin{equation}\label{C6h}\int_{t\wedge\tau^N_{r,R}}^{(t+h)\wedge\tau^N_{r,R}} |J_N^{r,R}\eta_N(s)|ds\leq \int_{t\wedge\tau^N_{r,R}}^{(t+h)\wedge\tau^N_{r,R}}\frac{4\beta^2}{r}+2\beta R + 2\sqrt{\beta-1} + o_{r,R}(1)ds\leq C(r,R)h.\end{equation}
On the other hand, using the same arguments of the proof of Proposition \ref{tight}, $$E\left[ \left( \mathcal{M}_{N,\eta}^{(t+h)\wedge\tau^N_{r,R}}- \mathcal{M}_{N,\eta}^{t\wedge\tau^N_{r,R}}\right)^2\right]=E\left[ \left( \mathcal{M}_{N,\eta}^{(t+h)\wedge\tau^N_{r,R}}\right)^2- \left(\mathcal{M}_{N,\eta}^{t\wedge\tau^N_{r,R}}\right)^2\right]= $$ 
$$ =E\left[\int_{t\wedge\tau^N_{r,R}}^{(t+h)\wedge\tau^N_{r,R}}\sum_{j\in\mathcal{S}}\left(\Delta \eta_N(s)\right)^2N^{1\over 2}|A_N(j,N^{1\over 2}s)|\Big(1-j\tanh(\lambda_N(N^{1\over 2}s)) \Big) ds\right]. $$
We want to estimate the term $(\Delta\eta_N(t))^2$ for $t\in[0,T\wedge\tau^N_{r,R}]$: recall \eqref{etadef}, so that $$\eta_N(t)-\eta_N(t-)=\theta_N(t)-\theta_N(t-)=\arctan\left(\frac{u_N(t)}{z_N(t)}\right)-\arctan\left(\frac{u_N(t-)}{z_N(t-)}\right).$$
As previously remarked, $u_N(t)=u_N(t-)$ for all $t\in[0,T]$ while, if $\tau\in[0,T\wedge\tau^N_{r,R}]$ is a jump time for $z_N$, $$|z_N(\tau)-z_N(\tau-)|=\frac{2\beta}{N^{3\over 4}}.$$ Since $|{d\over dx}\arctan(x)|\leq 1$ for all $x\in\mathbb{R}$, \begin{equation}\label{saltoeta}|\eta_N(\tau)-\eta_N(\tau-)|\leq \left| \frac{u_N(\tau)}{z_N(\tau)} - \frac{u_N(\tau-)}{z_N(\tau-)}\right|=\left|u_N(\tau)\frac{z_N(\tau)-z_N(\tau-)}{z_N(\tau)z_N(\tau-)}  \right|.\end{equation} Notice also that using the well-known property of $\arctan$ $$\left|\arctan(x)+\arctan\left(\frac{1}{x}\right)\right|=\frac{\pi}{2} \:\:\:\:\: \forall\: x\not=0,$$ we have that $$\left|\arctan\left(\frac{u_N(\tau)}{z_N(\tau)}\right)-\arctan\left(\frac{u_N(\tau-)}{z_N(\tau-)}\right)\right|=\left|\arctan\left(\frac{z_N(\tau)}{u_N(\tau)}\right)-\arctan\left(\frac{z_N(\tau-)}{u_N(\tau-)}\right)\right|$$ and we get a second estimate \begin{equation}
\label{saltoeta2} |\eta_N(\tau)-\eta_N(\tau-)|\leq \left| \frac{z_N(\tau)}{u_N(\tau)} - \frac{z_N(\tau-)}{u_N(\tau-)}\right|=\left|\frac{z_N(\tau)-z_N(\tau-)}{u_N(\tau)}  \right|.\end{equation}
Let now fix $\varepsilon>0$ and $\bar{N}\geq 1$ such that $2\beta\bar{N}^{-{3\over 4}}<\varepsilon$ and $4\varepsilon^2< r;$ so, for $N>\bar{N}$, if $|z_N(\tau)z_N(\tau-)|\geq \varepsilon^2$ it holds, by \eqref{saltoeta}, $$|\eta_N(\tau)-\eta_N(\tau-)|\leq \frac{\sqrt{R}}{\varepsilon^2}\frac{2\beta^2}{ N^{3\over 4}}.$$ Otherwise, consider the case $|z_N(\tau)z_N(\tau-)|<\varepsilon^2$: this implies $|z_N(\tau)|<2\varepsilon$ hence $|u_N(\tau)|\geq \sqrt{r-4\varepsilon^2}$. 
Therefore, by \eqref{saltoeta2}, $$|\eta_N(\tau)-\eta_N(\tau-)|\leq \frac{1}{\sqrt{r-4\varepsilon^2}}\frac{2\beta}{N^{3\over 4}}.$$ In conclusion, having fixed $\varepsilon$ and $\bar{N}$ as above, set $\bar{C}=\max\left\{ {1\over\sqrt{r-4\varepsilon^2}},{\sqrt{R}\over \varepsilon^2} \right\}$ we have \begin{equation}\label{saltoetavero} 
|\Delta\eta_N(t)|\leq \bar{C}\frac{2\beta}{N^{3\over 4}}
\end{equation} for any $t\in[0,T\wedge\tau^N_{r,R}]$ and $N\geq \bar{N}$. By means of \eqref{saltoetavero} $$E\left[ \left( \mathcal{M}_{N,\eta}^{(t+h)\wedge\tau^N_{r,R}}- \mathcal{M}_{N,\eta}^{t\wedge\tau^N_{r,R}}\right)^2\right]=$$ $$ =E\left[\int_{t\wedge\tau^N_{r,R}}^{(t+h)\wedge\tau^N_{r,R}}\sum_{j\in\mathcal{S}}\left(\Delta \eta_N(s)\right)^2N^{1\over 2}|A_N(j,N^{1\over 2}s)|\Big(1-j\tanh(\lambda_N(N^{1\over 2}s)) \Big) ds\right]\leq $$ \begin{equation*}\leq E\left[\int_{t\wedge\tau^N_{r,R}}^{(t+h)\wedge\tau^N_{r,R}}\sum_{j\in\mathcal{S}} \bar{C}^2 \frac{4\beta^2}{N^{3\over 2}} N^{1\over 2} \sup_{j\in\mathcal{S}}| A_N(j,N^{1\over 2}s)| 2 ds \right]\leq 16\beta^2\bar{C}^2 h,\end{equation*}
which, by H\"{o}lder inequality, implies \begin{equation}
\label{Cbar} E\left[ \left| \mathcal{M}_{N,\eta}^{(t+h)\wedge\tau^N_{r,R}}- \mathcal{M}_{N,\eta}^{t\wedge\tau^N_{r,R}}\right|\right]\leq \sqrt{E\left[ \left( \mathcal{M}_{N,\eta}^{(t+h)\wedge\tau^N_{r,R}}- \mathcal{M}_{N,\eta}^{t\wedge\tau^N_r\wedge\tau^N_R}\right)^2\right]}\leq 4\beta\bar{C}\sqrt{h}.
\end{equation}
In conclusion, using \eqref{C6h} and \eqref{Cbar}, for a given $h'>0$ there exists $\bar{N}\geq 1$ and $\bar{C}>0$ such that $$\sup_{0\leq h \leq h'} E\Big[ \left| \eta_N((t+h)\wedge\tau^N_{r,R})- \eta_N(t\wedge\tau^N_{r,R}) \right| \Big]\leq$$ $$\leq\sup_{0\leq h\leq h'} E\int_{t\wedge\tau^N_{r,R}}^{(t+h)\wedge\tau^N_{r,R}} \left|J_N\eta_N(s)\right|ds+\sup_{0\leq h\leq h'}E\left| \mathcal{M}_{N,\eta}^{(t+h)\wedge\tau^N_{r,R}}- \mathcal{M}_{N,\eta}^{t\wedge\tau^N_{r,R}}\right|\leq$$ $$\leq \sup_{0\leq h\leq h'} C(r,R)h+\sup_{0\leq h\leq h'} 4\beta\bar{C}\sqrt{h}\leq C(r,R)h'+ 4\beta\bar{C}\sqrt{h'},$$ for any $N\geq \bar{N}$. Being $h' \ll \sqrt{h'}$ the proof is completed.
\end{proof}

\subsection{Characterization of the limit of the sequence of stopped processes}
By Proposition \ref{tight}, for any choice of $r,R$ such that $0<r<{\beta \over \beta-1}\bar{\lambda}<R$, the sequence $(P_N\circ\varphi^{-1}_{r,R})_{N\geq 1}$ admits a converging subsequence $(P_n\circ\varphi_{r,R}^{-1})_{n\geq 1}$, where $P_N=\mathcal{L}(\kappa_N)$ and $\varphi_{r,R}$ defined by \eqref{phi} (see Remark \ref{stoppedmeasure}). Let $\bar{P}_{r,R}$ be the limit of $(P_n\circ\varphi_{r,R}^{-1})_{n\geq 1}$, which identifies a \textit{cadlag} stochastic process $(\bar{\kappa}_{r,R}(t))_{t\in[0,T]}$. Defining the stopping time $$\tau^n_{r,R}:=\inf\{t\in[0,T]\:|\: \kappa_n(t)\not\in\:] r,R[\}, $$ we have that the sequence of processes $(\kappa_n(t\wedge\tau^n_{r,R}))_{n\geq 1}$ weakly converges to the stochastic process $\bar{\kappa}_{r,R}(t)$. Let now $L$ denote the infinitesimal generator of the solution of the stochastic differential equation $$d\kappa(t)=\left(4\beta^2-\frac{\beta}{2}\kappa^2(t)\right)dt+2\beta\sqrt{2\kappa(t)}dB(t),$$ namely, for any $f\in dom(L)$, \begin{equation}\label{geninfeqlimite}Lf(\kappa)=4\beta^2\kappa f''(\kappa)+\left( 4\beta^2-\frac{\beta}{2}\kappa^2 \right)f'(\kappa).\end{equation} The process $\bar{\kappa}_{r,R}(t)$ satisfies the following property.

\begin{proposition}\label{stopmart}
For any $f\in\mathcal{C}_c^{\infty}([r,R])$ the stochastic process $$\bar{\mathcal{N}}_{r,R}^f(t):=f(\bar{\kappa}_{r,R}(t))-f(\bar{\kappa}_{r,R}(0))-\int_0^t Lf(\bar{\kappa}_{r,R}(s))ds$$ is a martingale.
\end{proposition}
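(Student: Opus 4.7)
The plan is to pose this as a martingale problem identification. For each $N$, Dynkin's formula applied to the stopped Markov process $(\kappa_N(\cdot\wedge\tau^N_{r,R}),\theta_N(\cdot\wedge\tau^N_{r,R}))$ with generator $H_N^{r,R}$ from Lemma \ref{HN} tells me that
$$\mathcal{N}_N^f(t):=f(\kappa_N(t\wedge\tau^N_{r,R}))-f(\kappa_N(0))-\int_0^t H_N^{r,R}f(\kappa_N(s),\theta_N(s))\,ds$$
is a martingale for every $f\in\mathcal{C}_c^\infty([r,R])$; since $f$ is compactly supported and $\kappa_N(\cdot\wedge\tau^N_{r,R})\in[0,R]$, these martingales are uniformly bounded in $N$ and $t\in[0,T]$.

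Because $f$ depends only on $\kappa$, Lemma \ref{HN} simplifies to
$$H_N^{r,R}f(\kappa,\theta)=\mathbbm{1}_{]r,R[}(\kappa)\Bigl[8\beta^2\kappa\cos^2\theta\,f''(\kappa)+\bigl(4\beta^2-\tfrac{4\beta}{3}\kappa^2\cos^4\theta\bigr)f'(\kappa)+o_{r,R}(1)\Bigr].$$
The next step is to invoke the averaging principle of Proposition \ref{averagingproposition} with $x_N=\kappa_N(\cdot\wedge\tau^N_{r,R})$, $y_N=\eta_N(\cdot\wedge\tau^N_{r,R})$, $c=2\sqrt{\beta-1}$ and $\xi=1/2$, so that $\theta_N(s)=cN^{1/2}s+\eta_N(s)$ by \eqref{etadef}. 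The hypotheses are in hand: $x_N$ takes values in the compact set $[0,R]$, converges weakly to $\bar{\kappa}_{r,R}$ along the chosen subsequence, and satisfies Aldous's estimate \eqref{tight2} by Proposition \ref{tight}, while Proposition \ref{etavera} delivers \eqref{etacond} for $\eta_N$. Averaging in $\theta$ using $\overline{\cos^2\theta}=1/2$ and $\overline{\cos^4\theta}=3/8$ produces
$$\bar{\phi}(\kappa)=4\beta^2\kappa\,f''(\kappa)+\Bigl(4\beta^2-\tfrac{\beta}{2}\kappa^2\Bigr)f'(\kappa)=Lf(\kappa),$$
so that $\int_0^t H_N^{r,R}f(\kappa_N(s),\theta_N(s))\,ds$ converges weakly to $\int_0^t Lf(\bar{\kappa}_{r,R}(s))\,ds$; the uniform $o_{r,R}(1)$ remainder contributes at most $T\cdot o(1)$ and disappears in the limit.

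The final step is a standard passage to the limit in the martingale identity. Using Skorohod's representation theorem I realize the subsequence on a common probability space so that $\kappa_N\to\bar{\kappa}_{r,R}$ almost surely in the Skorohod topology; combining this with the averaging convergence just established and the uniform bound on $\mathcal{N}_N^f$, dominated convergence yields
$$E\bigl[(\bar{\mathcal{N}}_{r,R}^f(t)-\bar{\mathcal{N}}_{r,R}^f(s))\,\Psi(\bar{\kappa}_{r,R}|_{[0,s]})\bigr]=\lim_N E\bigl[(\mathcal{N}_N^f(t)-\mathcal{N}_N^f(s))\,\Psi(\kappa_N|_{[0,s]})\bigr]=0$$
for every $0\leq s\leq t\leq T$ and every bounded continuous $\Psi$ on path space, which is precisely the martingale property of $\bar{\mathcal{N}}_{r,R}^f$ with respect to the natural filtration of $\bar{\kappa}_{r,R}$. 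The main technical obstacle I expect is the averaging step: one must verify that the integrand $\phi(\kappa,\theta)$ — locally Lipschitz on $[0,R]$, $2\pi$-periodic in $\theta$ — together with the indicator $\mathbbm{1}_{]r,R[}$ and the uniform remainder $o_{r,R}(1)$, fits exactly into the framework of Proposition \ref{averagingproposition}, and that the discontinuity of the indicator at the endpoints $r,R$ does not spoil the identification of the limit.
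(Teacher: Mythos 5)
Your proposal is correct and follows essentially the same route as the paper: the prelimit martingale coming from the stopped generator of Lemma \ref{HN}, the averaging principle of Proposition \ref{averagingproposition} applied with exactly the same choices $x_N=\kappa_N(\cdot\wedge\tau^N_{r,R})$, $y_N=\eta_N(\cdot\wedge\tau^N_{r,R})$, $c=2\sqrt{\beta-1}$, $\xi=1/2$, and a passage to the limit of the martingale identity tested against functionals of the past. The endpoint obstacle you flag at the end is resolved precisely by the compact support of $f$: the integrand $H_N^{r,R}f(\kappa,\theta)$ already vanishes for $\kappa$ near $r$ and $R$, so the indicator is harmless and $\int_{s\wedge\tau^N_{r,R}}^{t\wedge\tau^N_{r,R}}$ coincides with $\int_s^t$ evaluated along the stopped process, which is the form required by the averaging principle.
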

\begin{proof}
Let $f\in\mathcal{C}_c^{\infty}([r,R])$ and fix the following notations: $$A_f(\kappa,\theta):=8\beta^2\kappa\cos^2\theta f''(\kappa) + \left(4\beta^2-\frac{4\beta}{3}\kappa^2\cos^4\theta\right)f'(\kappa),$$ $$\bar{A}_f(\kappa):=\frac{1}{2\pi} \int_0^{2\pi} A_f(\kappa,\theta) d\theta.$$
Notice that it holds that $$H_n^{r,R}f(\kappa,\theta)=A_f(\kappa,\theta)+o_{r,R}(1)$$ where $H_n^{r,R}$ is the infinitesimal generator of the process $\kappa_n(t\wedge\tau^n_{r,R})$ and $\bar{A}_f(\kappa)=Lf(\kappa)$.\\ Define the following processes, for $t\in[0,T]$: $$\mathcal{M}^{f}_n(t)=f(\kappa_n(t\wedge\tau^n_{r,R}))-f(\kappa_n(0))-\int_0^{t\wedge\tau^n_{r,R}} H_n^{r,R}f(\kappa_n(s))ds,$$ $$\mathcal{N}^f_n(t)=f(\kappa_n(t\wedge\tau^n_{r,R}))-f(\kappa_n(0))-\int_0^{t\wedge\tau^n_{r,R}} A_f(\kappa_n(s),\theta_n(s))ds;$$
moreover it holds that
$$\bar{\mathcal{N}}^f_{r,R}(t)=f(\bar{\kappa}_{r,R}(t))-f(\bar{\kappa}_{r,R}(0))-\int_0^{t}\bar{A}_f(\bar{\kappa}_{r,R}(s))ds.$$
We want to show that $\bar{\mathcal{N}}^f_{r,R}(t)$ is a martingale. 
Fix $m \geq 1$, $g_1,\dots,g_m$ continuous and bounded functions and $0\leq t_1 \leq \cdots \leq t_m \leq s \leq t \leq T$. 
Since $\mathcal{M}^f_n$ is a martingale, $$E\Big[\Big( \mathcal{M}^f_n(t)-\mathcal{M}^f_n(s) \Big)g_1(\kappa_n(t_1\wedge\tau^n_{r,R}))\cdots g_m(\kappa_n(t_m\wedge\tau^n_{r,R})) \Big]=0,$$ which implies $$E\Big[\Big( \mathcal{N}^f_n(t)-\mathcal{N}^f_n(s) \Big)g_1(\kappa_n(t_1\wedge\tau^n_{r,R}))\cdots g_m(\kappa_n(t_m\wedge\tau^n_{r,R})) \Big]=o_{r,R}(1).$$
The last equivalence can be written as \begin{equation} 
E\Big[\Big( f(\kappa_n(t\wedge\tau^n_{r,R}))-f(\kappa_n(s\wedge\tau^n_{r,R})) \Big)g_1(\kappa_n(t_1\wedge\tau^n_{r,R}))\cdots g_m(\kappa_n(t_m\wedge\tau^n_{r,R})) \Big]+ \label{pinco1}
\end{equation}
\begin{align*}
 +E\left[-\left(\int_{s\wedge\tau^n_{r,R}}^{t\wedge\tau^n_{r,R}} A_f(\kappa_n(\sigma),\theta_n(\sigma))d\sigma\right) g_1(\kappa_n(t_1\wedge\tau^n_{r,R}))\cdots g_m(\kappa_n(t_m\wedge\tau^n_{r,R})) \right]=
\end{align*}
$$=o_{r,R}(1).$$
Notice that, since $(\kappa_n(t\wedge\tau^n_{r,R}))_{n\geq 1}$ weakly converges to $\bar{\kappa}_{r,R}(t)$, the term in \eqref{pinco1} converges to $$E\Big[\Big( f(\bar{\kappa}_{r,R}(t))-f(\bar{\kappa}_{r,R}(s)) \Big)g_1(\bar{\kappa}_{r,R}(t_1))\cdots g_m(\bar{\kappa}_{r,R}(t_m)) \Big]$$ and $o_{r,R}(1)$ converges to 0,
 in order to show that $\bar{\mathcal{N}}^f_{r,R}(t)$ is a martingale we only have to prove that the term \begin{align*}
 E\left[\left(\int_{s\wedge\tau^n_{r,R}}^{t\wedge\tau^n_{r,R}} A_f(\kappa_n(\sigma),\theta_n(\sigma))d\sigma\right) g_1(\kappa_n(t_1\wedge\tau^n_{r,R}))\cdots g_m(\kappa_n(t_m\wedge\tau^n_{r,R})) \right]
\end{align*} converges to $$E\left[\int_{s}^{t}\bar{A}_f(\bar{\kappa}_{r,R}(\sigma))d\sigma g_1(\bar{\kappa}_{r,R}(t_1))\cdots g_m(\bar{\kappa}_{r,R}(t_m)) \right].$$
Notice that, since $f\in\mathcal{C}_c^\infty([r,R])$, it holds that $$\int_{s\wedge\tau^n_{r,R}}^{t\wedge\tau^n_{r,R}} A_f(\kappa_n(\sigma),\theta_n(\sigma))d\sigma=\int_{s}^{t} A_f(\kappa_n(\sigma\wedge\tau^n_{r,R}),\theta_n(\sigma\wedge\tau^n_{r,R}))d\sigma,$$ so we have all the elements to apply the averaging principle of Proposition \ref{averagingproposition}: we can take $x_N(t)=\kappa_N(t\wedge\tau^N_{r,R})$, $\bar{x}(t)=\bar{\kappa}_{r,R}(t)$, $y_N(t)=\eta_N(t\wedge\tau^N_{r,R})$ with $\eta_N(t)$ defined by \eqref{etadef}, $c=2\sqrt{\beta-1}$ and $\xi={1\over 2}$. Thanks to Proposition \ref{etavera} we can apply the averaging principle, from which $$E\left[ \int_{s}^{t} A_f(\kappa_n(\sigma\wedge\tau^n_{r,R}),\theta_n(\sigma\wedge\tau^n_{r,R}))d\sigma  \right]\to E\left[  \int_{s}^{t} \bar{A}_f(\bar{\kappa}_{r,R}(\sigma))d\sigma \right] $$
as $n\to\infty$. Finally, since  
 $$\int_{s}^{t} A_f(\kappa_n(\sigma\wedge\tau^n_{r,R}),\theta_n(\sigma\wedge\tau^n_{r,R}))d\sigma$$
can be bounded by a constant uniformly in $n$, $$E\left[ \left(\int_{s}^{t} A_f(\kappa_n(\sigma\wedge\tau^n_{r,R}),\theta_n(\sigma\wedge\tau^n_{r,R}))d\sigma\right)g_1(\kappa_n(t_1\wedge\tau^n_{r,R}))\cdots g_m(\kappa_n(t_m\wedge\tau^n_{r,R}))  \right]$$ $$\downarrow$$  $$E\left[ \left( \int_{s}^{t} \bar{A}_f(\bar{\kappa}_{r,R}(\sigma))d\sigma \right)g_1(\bar{\kappa}_{r,R}(t_1))\cdots g_m(\bar{\kappa}_{r,R}(t_m))\right] $$ as $n\to\infty$ and the proof is concluded.
\end{proof}

Let us recall the definition of \textit{stopped martingale problem} and a very useful theorem related to it.
\begin{definition}
Let $E$ be a metric space, $U$ an open subset of $E$ and $(X(t))_{t\in[0,T]}$ be a stochastic process with sample paths in $\mathcal{D}([0,T],E)$. Define the stopping time $$\tau=\inf\{t\in[0,T]\:|\: X(t)\not\in U \:\:\:\: or\:\:\:\: X(t-)\not\in U\}.$$ Let $L$ be a linear operator $L:dom(L)\subset B(E)\to B(E)$ with $B(E)$ the Banach space of bounded measurable function $E\to\mathbb{R}$. Then, $X(\cdot\wedge\tau)$ is a solution of the stopped martingale problem for $(L,U)$ if $$f(X(t\wedge\tau))-f(X(0))-\int_0^{t\wedge\tau} Lf(X(s))ds$$ is a martingale for all $f\in dom(L)$.
\end{definition}

\begin{theorem}[\cite{EK86}, Thm 6.1 p.216]\label{localization}
Let $(E,d)$ be a complete and separable metric space and let $L$ be a linear operator $L:\mathcal{C}_b(E)\to B(E)$. If the $\mathcal{D}([0,T],E)$ martingale problem for $L$ is well-posed, then for any open set $U\subset E$ there exists a unique solution of the stopped martingale problem for $(L,U)$.
\end{theorem}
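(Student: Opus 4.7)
The plan is to deduce both existence and uniqueness for the stopped martingale problem from the assumed well-posedness of the unstopped problem for $L$, with the uniqueness direction relying on a concatenation/extension argument.

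For existence, I would take $X = (X(t))_{t \in [0,T]}$ to be the unique solution of the $\mathcal{D}([0,T], E)$ martingale problem for $L$ with the prescribed initial distribution, and define $\tau = \inf\{t \in [0,T] : X(t) \notin U \text{ or } X(t-) \notin U\}$. For any $f \in \mathcal{C}_b(E)$, the process $M^f_t := f(X(t)) - f(X(0)) - \int_0^t Lf(X(s))\,ds$ is a martingale by hypothesis, and by the optional sampling theorem applied at the bounded stopping time $t \wedge \tau$ the process $M^f_{t \wedge \tau}$ is also a martingale. This identifies $X(\cdot \wedge \tau)$ as a solution of the stopped martingale problem for $(L,U)$.

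For uniqueness, let $Y$ be any solution of the stopped problem with the same initial distribution and corresponding exit time $\tau_Y$. The idea is to extend $Y$ past $\tau_Y$ to obtain a solution $\tilde{Y}$ of the full unstopped martingale problem. By well-posedness, each initial point $z \in E$ gives rise to a unique law $P_z$ on $\mathcal{D}([0,T],E)$; since a well-posed martingale problem yields a strong Markov family admitting regular conditional kernels, I would extract a measurable selection $z \mapsto P_z$. On an enlarged probability space, I then define $\tilde{Y}$ to coincide with $Y$ on $[0, \tau_Y]$ and, conditionally on $\mathcal{F}_{\tau_Y}$, to evolve on $[\tau_Y, T]$ according to $P_{Y(\tau_Y)}$. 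The next step is to verify that for every $f \in \mathcal{C}_b(E)$ the process $f(\tilde Y(t)) - \int_0^t Lf(\tilde Y(s))\,ds$ is a martingale on $[0,T]$: the martingale property on $[0,\tau_Y]$ comes from the hypothesis on $Y$, on $[\tau_Y, T]$ from the hypothesis on $P_{Y(\tau_Y)}$, and the two pieces are glued via the conditional tower property. Well-posedness of the unstopped problem then uniquely determines the law of $\tilde Y$, hence, via the continuous stopping map $x \mapsto x(\cdot \wedge \tau)$, the law of $Y$ as well.

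The main obstacle is the measurable concatenation step. One must produce the selection $z \mapsto P_z$ in a jointly measurable way and verify rigorously that gluing at the random time $\tau_Y$ does not spoil the martingale property; this requires checking that the compensator $\int_0^\cdot Lf(\tilde Y(s))\,ds$ is continuous at $\tau_Y$ and that no spurious jump appears in the martingale at the concatenation point. These are exactly the technical points handled in Ethier--Kurtz via regular conditional probabilities and the strong Markov property that follows from well-posedness; once they are in place, both existence and uniqueness of the stopped problem reduce to the corresponding properties of the unstopped one.
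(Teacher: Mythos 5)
This statement is quoted verbatim from Ethier--Kurtz (Theorem 6.1, Chapter 4) and the paper gives no proof of its own, so there is nothing internal to compare against; your sketch is essentially the argument in that reference: existence by optional sampling applied to the unique solution of the full problem, uniqueness by concatenating a stopped solution at $\tau_Y$ with the law $P_{Y(\tau_Y)}$ via the measurable kernel $z\mapsto P_z$ that well-posedness provides, then invoking uniqueness of the unstopped problem. The one small correction: the map $x\mapsto x(\cdot\wedge\tau_U(x))$ on $\mathcal{D}([0,T],E)$ is Borel measurable but not continuous in the Skorohod topology; measurability is all you need to transfer uniqueness of the law of $\tilde Y$ to that of $Y$, so the argument stands.
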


\begin{proposition}\label{killedprocess}
Fix $\varepsilon>0$: let $\bar{\kappa}_{r,R}(t)$ be the weak limit of the sequence of stopped processes $(\kappa_n(t\wedge\tau^n_{r,R}))_{n\geq 1}$ and define the stopping time $$\bar{\tau}^\varepsilon_{r,R}=\inf\{t\in [0,T]\:|\: \bar{\kappa}_{r,R}(t)\not\in\:\: ]r+\varepsilon,R-\varepsilon[\:\:\},$$with $r,R$ such that $0<r<{\beta\over \beta-1}\bar{\lambda}^2<R$.  Let $\kappa(t)$ be the unique solution of the stochastic differential equation $$d\kappa(t)=\left(4\beta^2-\frac{\beta}{2}\kappa^2(t)\right)dt+2\beta\sqrt{2\kappa(t)}dB(t)$$ with $\kappa(0)={\beta\over \beta-1}\bar{\lambda}^2$ and define $$\tau^\varepsilon_{r,R}=\inf\{t\in [0,T]\:|\: \kappa(t)\not\in\:\: ]r+\varepsilon,R-\varepsilon[\:\:\}.$$ Then, the processes $\bar{\kappa}_{r,R}(t\wedge\bar{\tau}^\varepsilon_{r,R})$ and $\kappa(t\wedge\tau^{\varepsilon}_{r,R})$ have the same distribution.
\end{proposition}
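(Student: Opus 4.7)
The plan is to identify both stopped processes as solutions of the same stopped martingale problem on the open set $U = (r+\varepsilon, R-\varepsilon)$, and then invoke uniqueness through Theorem \ref{localization}. Concretely, denote by $L$ the infinitesimal generator defined in \eqref{geninfeqlimite}. I would first argue that $\bar{\kappa}_{r,R}(\cdot\wedge\bar{\tau}^\varepsilon_{r,R})$ is a solution of the stopped martingale problem for $(L,U)$. Since $(r+\varepsilon,R-\varepsilon)\subset(r,R)$, any test function $f\in\mathcal{C}^\infty_c((r+\varepsilon,R-\varepsilon))$ extends by zero to a function in $\mathcal{C}^\infty_c([r,R])$, and Proposition \ref{stopmart} tells us that $\bar{\mathcal{N}}_{r,R}^f(t)=f(\bar{\kappa}_{r,R}(t))-f(\bar{\kappa}_{r,R}(0))-\int_0^t Lf(\bar{\kappa}_{r,R}(s))\,ds$ is a martingale. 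Applying optional stopping at $\bar{\tau}^\varepsilon_{r,R}$ (which is a bounded stopping time, and $Lf$ is bounded since $f$ has compact support away from $0$) yields that $f(\bar{\kappa}_{r,R}(t\wedge\bar{\tau}^\varepsilon_{r,R}))-f(\bar{\kappa}_{r,R}(0))-\int_0^{t\wedge\bar{\tau}^\varepsilon_{r,R}}Lf(\bar{\kappa}_{r,R}(s))\,ds$ is again a martingale, which is exactly the stopped martingale property on $U$.

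Second, by It\^o's formula applied to \eqref{eqlimite}, the process $\kappa(t)$ is itself a solution of the martingale problem for $L$; stopping at $\tau^\varepsilon_{r,R}$ produces a solution of the stopped martingale problem for $(L,U)$. Both processes share the same initial condition $\frac{\beta}{\beta-1}\bar{\lambda}^2$, which lies in $U$ by the choice of $r,R,\varepsilon$. At this point, Lemma \ref{existenceanduni} gives existence and uniqueness in law for the SDE \eqref{eqlimite}, hence well-posedness of the martingale problem for $L$; Theorem \ref{localization} then delivers uniqueness of the solution of the stopped martingale problem on the open set $U$. The two stopped processes, being solutions of the same well-posed problem with the same initial distribution, must therefore coincide in law.

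The main obstacle I anticipate is a careful justification of the well-posedness of the global martingale problem required by Theorem \ref{localization}. The coefficients $b(\kappa)=4\beta^2-\frac{\beta}{2}\kappa^2$ and $\sigma(\kappa)=2\beta\sqrt{2\kappa}$ of \eqref{eqlimite} are not globally Lipschitz, and the diffusion coefficient degenerates at $\kappa=0$, so one must argue that uniqueness in law from Lemma \ref{existenceanduni} (obtained via the auxiliary two-dimensional diffusion $(X,Y)$) transfers to uniqueness of the martingale problem on the full state space $[0,\infty)$. A secondary point is that the margin $\varepsilon>0$ is essential: it is what allows the extension-by-zero trick for test functions and makes the integrands $Lf$ in the optional-stopping step bounded, so that no additional integrability hypothesis on $\bar{\kappa}_{r,R}$ near the boundary $\{r,R\}$ is needed.
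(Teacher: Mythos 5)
Your proposal is correct and follows essentially the same route as the paper: both stopped processes are identified as solutions of the stopped martingale problem for $(L,\,]r+\varepsilon,R-\varepsilon[)$, and uniqueness is obtained from the well-posedness in Lemma \ref{existenceanduni} combined with Theorem \ref{localization}. The only (minor) divergence is in the test functions: you extend $f\in\mathcal{C}_c^\infty(\,]r+\varepsilon,R-\varepsilon[\,)$ by zero, which vanishes at the exit boundary and so only tests the path strictly inside $U$, whereas the paper starts from an arbitrary $g\in\mathcal{C}_0^\infty([0,+\infty[)$ and picks $f\in\mathcal{C}_c^\infty([r,R])$ agreeing with $g$ on the closed interval $[r+\varepsilon,R-\varepsilon]$, which yields the stopped-martingale property for the full measure-determining class demanded by the definition of the stopped martingale problem, including the value of the process at the exit time.
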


\begin{proof}
First of all, we have that $\kappa_N(0)$ converges in distribution to $\kappa(0).$
Notice that, given $g\in\mathcal{C}_0^{\infty}([0,+\infty[)$, for any $\varepsilon>0$ there exists a function $f\in\mathcal{C}_c^\infty([r,R])$ such that $f(x)=g(x)$ for any $x\in[r+\varepsilon,R-\varepsilon]$. Then, by Proposition \ref{stopmart}, for any $g\in\mathcal{C}_0^{\infty}([0,+\infty[)$, the process $$g(\bar{\kappa}_{r,R}(t\wedge\bar{\tau}^\varepsilon_{r,R}))-g(\bar{\kappa}_{r,R}(0))-\int_0^{t\wedge\bar{\tau}^\varepsilon_{r,R}} Lg(\bar{\kappa}_{r,R}(s))ds$$ is a martingale. \\
On the other hand, the process $\kappa(t\wedge\tau^\varepsilon_{r,R})$  obviously solves the stopped martingale problem for $(L,]r+\varepsilon,R-\varepsilon[)$ where $L$ is given by $\eqref{geninfeqlimite}$. By Lemma \ref{existenceanduni}, the martingale problem for $L$ is well-posed hence for Theorem \ref{localization} the stopped martingale problem for $(L,]r+\varepsilon,R-\varepsilon[)$ has a unique solution. But, since $\mathcal{C}_0^{\infty}([0,+\infty[)$ is measure-determining and for all  $g\in\mathcal{C}_0^{\infty}([0,+\infty[)$ the process $$g(\bar{\kappa}_{r,R}(t\wedge\bar{\tau}^\varepsilon_{r,R}))-g(\bar{\kappa}_{r,R}(0))-\int_0^{t\wedge\bar{\tau}^\varepsilon_{r,R}} Lg(\bar{\kappa}_{r,R}(s))ds$$ is a martingale, the processes $\kappa(t\wedge\tau^\varepsilon_{r,R})$ and $\bar{\kappa}_{r,R}(t\wedge\bar{\tau}^\varepsilon_{r,R})$ must have the same distribution.
\end{proof}
\subsection{From localization to the proof of Theorem \ref{mainthm}}
In this paragraph we exploit the localization argument to conclude the proof of Theorem \ref{mainthm}.

\begin{proposition}\label{proposizionefinale}
Given Proposition \ref{killedprocess} and Lemma \ref{existenceanduni}, the sequence of processes $(\kappa_N(t))_{N\geq 1}$ weakly converges to the unique solution of \eqref{eqlimite}.
\end{proposition}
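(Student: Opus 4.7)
The plan is to conclude weak convergence of the full sequence $(\kappa_N)$ to $\kappa$ from the convergence of the stopped versions, via a standard localization argument. The four ingredients I have in hand are: tightness of the stopped sequences (Proposition \ref{tight}), the martingale property of subsequential limits (Proposition \ref{stopmart}), the identification of these limits with $\kappa$ stopped (Proposition \ref{killedprocess}), and the fact that $P(\tau_{r,R} \leq T) \to 0$ as $r \to 0^+$, $R \to +\infty$ (Lemma \ref{existenceanduni}).

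First, I would upgrade Proposition \ref{killedprocess} to convergence of the \emph{whole} stopped sequence: for each fixed $0 < r < \frac{\beta}{\beta-1}\bar{\lambda}^2 < R$, one has $\kappa_N(\cdot \wedge \tau^N_{r,R}) \Rightarrow \kappa(\cdot \wedge \tau_{r,R})$. Indeed, by Proposition \ref{tight}, any subsequence admits a further convergent subsubsequence with limit $\bar{\kappa}_{r,R}$; Proposition \ref{killedprocess} identifies $\bar{\kappa}_{r,R}(\cdot \wedge \bar{\tau}^{\varepsilon}_{r,R})$ with $\kappa(\cdot \wedge \tau^{\varepsilon}_{r,R})$ in distribution for every $\varepsilon > 0$. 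Letting $\varepsilon \to 0^+$ and using that $\kappa$ has continuous paths (so $\tau^{\varepsilon}_{r,R} \uparrow \tau_{r,R}$ almost surely), one obtains $\bar{\kappa}_{r,R}$ equal in law to $\kappa(\cdot \wedge \tau_{r,R})$, fixing the subsequential limit uniquely.

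Next, pick sequences $r_k \downarrow 0$ and $R_k \uparrow +\infty$ with $r_k < \frac{\beta}{\beta-1}\bar{\lambda}^2 < R_k$ for every $k$. For any bounded continuous $F : \mathcal{D}([0,T], \mathbb{R}) \to \mathbb{R}$, I would split
\begin{align*}
E[F(\kappa_N)] - E[F(\kappa)] &= \bigl(E[F(\kappa_N)] - E[F(\kappa_N(\cdot \wedge \tau^N_{r_k, R_k}))]\bigr) \\
&\quad + \bigl(E[F(\kappa_N(\cdot \wedge \tau^N_{r_k, R_k}))] - E[F(\kappa(\cdot \wedge \tau_{r_k, R_k}))]\bigr) \\
&\quad + \bigl(E[F(\kappa(\cdot \wedge \tau_{r_k, R_k}))] - E[F(\kappa)]\bigr).
\end{align*}
The middle term vanishes as $N \to \infty$ for each fixed $k$ by the step above. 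The last term is bounded by $2\|F\|_\infty \, P(\tau_{r_k, R_k} \leq T)$, which tends to zero as $k \to \infty$ by Lemma \ref{existenceanduni}. The first term is bounded in absolute value by $2\|F\|_\infty \, P(\tau^N_{r_k, R_k} \leq T)$.

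The main obstacle is therefore to bound $P(\tau^N_{r_k, R_k} \leq T)$ \emph{uniformly in} $N$. To achieve this I would use a slight enlargement: pick $r_k' < r_k$ and $R_k' > R_k$ with $r_k' < \frac{\beta}{\beta-1}\bar{\lambda}^2 < R_k'$, and observe that $\{\tau^N_{r_k, R_k} \leq T\}$ coincides with the event that the stopped process $\kappa_N(\cdot \wedge \tau^N_{r_k', R_k'})$ exits the open interval $(r_k, R_k)$ before time $T$. The complement of this event is open in the Skorohod topology on $\mathcal{D}([0,T], \mathbb{R})$ by compactness of paths, so the Portmanteau theorem together with the weak convergence $\kappa_N(\cdot \wedge \tau^N_{r_k', R_k'}) \Rightarrow \kappa(\cdot \wedge \tau_{r_k', R_k'})$ obtained in the first step gives
$$
\limsup_{N \to \infty} P(\tau^N_{r_k, R_k} \leq T) \;\leq\; P(\tau_{r_k, R_k} \leq T),
$$
which goes to zero as $k \to \infty$. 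Choosing $k$ large first (to control the first and third error terms) and then $N$ large (to control the middle one), one obtains $E[F(\kappa_N)] \to E[F(\kappa)]$, which is the desired weak convergence.
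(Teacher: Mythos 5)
Your argument is correct and follows essentially the same localization strategy as the paper: telescoping $E[F(\kappa_N)]-E[F(\kappa)]$ through the stopped processes, controlling $\limsup_N P(\tau^N_{r,R}\leq T)$ via Portmanteau applied to a strictly larger stopping interval, and sending the localization levels to $0$ and $+\infty$ using \eqref{zeroprobabilitystoppingtimes} from Lemma \ref{existenceanduni}. The only substantive difference is that you let $\varepsilon\to 0^+$ to identify the \emph{whole} subsequential limit $\bar{\kappa}_{r,R}$ with $\kappa(\cdot\wedge\tau_{r,R})$ (which strictly speaking only identifies $\bar{\kappa}_{r,R}(\cdot\wedge\bar{\tau}_{r,R})$, leaving the behaviour after the exit time unaddressed), whereas the paper avoids this by keeping $|E[f(\bar{\kappa}_m(\cdot))]-E[f(\bar{\kappa}_m(\cdot\wedge\bar{\tau}_m))]|$ as an explicit extra error term, again bounded by $2\|f\|_\infty P(\tau_m\leq T)$.
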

\begin{proof}
Let us fix some notation: for any $m\geq 1$ define $$U_m=]r_m+\varepsilon_m,R_m-\varepsilon_m[$$ such that $0<r_m<r_m+\varepsilon_m<{\beta\over \beta-1}\bar{\lambda}^2<R_m-\varepsilon_m<R_m$ for all $m\geq 1$, $U_1\subset U_2 \subset \cdots $ and $$\lim_{m\to\infty}r_m+\varepsilon_m=0, \:\:\:\:\:\: \lim_{m\to\infty} R_m-\varepsilon_m =+\infty.$$
For any $m$, let $\bar{\kappa}_m(t):=\bar{\kappa}_{r_m,R_m}(t)$ be the weak limit of $\kappa_N(t\wedge\tau^N_{r_m,R_m})$ and let $\kappa(t)$ be the solution of the limiting equation \eqref{eqlimite}. Moreover, we define the stopping times 
\[
\tau_{U_m}^N=\inf\{t\in [0,T]\:|\: \kappa_N(t) \not\in U_m \},
\]
\[
\bar{\tau}_m=\inf\{t\in [0,T]\:|\: \bar{\kappa}_m(t) \not\in U_m \},
\]
\[
\tau_m=\inf\{t\in [0,T]\:|\: \kappa(t) \not\in U_m \}.
\]
By Proposition \ref{killedprocess}, the process $\bar{\kappa}_m(t)$ is continuous a.s., hence the weak convergence of $(\kappa_N(t\wedge\tau^N_{r_m,R_m}))_{N\geq 1}$  to $\bar{\kappa}_m(t)$ also holds endowing the space $\mathcal{D}([0,T],\mathbb{R})$ with the uniform topology (see for example Lemma 1.6.4 in \cite{S04}). For any $m\geq 1$ consider the set 
\[
A_m=\{x\in\mathcal{D}([0,T],\mathbb{R})\:|\: x(t)\in U_m \:\:\: \forall\:t\in[0,T]\};
\]
since $A_m^c$ is a closed set in the uniform topology of $\mathcal{D}([0,T],\mathbb{R})$ for any $m$, it holds
\begin{equation}\label{limsupclosedset}
\limsup_N P(\kappa_N(\cdot\wedge\tau^N_{r_m,R_m})\in A^c_m) \leq P(\bar{\kappa}_m(\cdot)\in A_m^c), \:\:\:\:\: \forall\:m\geq 1
\end{equation}
by the Portmanteau Theorem. By Proposition \ref{killedprocess} $\bar{\kappa}_m(t\wedge\bar{\tau}_m)$ and $\kappa(t\wedge\tau_m)$ have the same distribution, so observe that 
\[
P(\bar{\kappa}_m(\cdot)\in A_m^c)=P(\bar{\tau}_m\leq T)=P(\tau_m\leq T).
\]
Moreover, since $U_m\subset\: ]r_m,R_m[$,
\[ 
P(\kappa_N(\cdot\wedge\tau^N_{r_m,R_m})\in A^c_m)= P(\tau^N_{U_m}\leq T) \geq P(\tau^N_{r_m,R_m}\leq T),
\]
hence, by \eqref{limsupclosedset},
\begin{equation}
\label{limsupstopping}
\limsup_N P(\tau^N_{r_m,R_m}\leq T)\leq P({\tau}_m\leq T), \:\:\:\:\: \forall\:m\geq 1.
\end{equation}
Let $f:\mathcal{D}([0,T],\mathbb{R})\to \mathbb{R}$ be a continuous and bounded function: we want to show that $$|E[f(\kappa_N(\cdot))]-E[f(\kappa(\cdot))]|\to 0$$ as $N\to\infty$.
We can see that, for any $m,N\geq 1$, 
\begin{align*}
|E[f(\kappa_N(\cdot))]-E[f(\kappa(\cdot))|&\leq| E[f(\kappa_N(\cdot))]-E[f(\kappa_N(\cdot\wedge \tau^N_{r_m,R_m})]| + \\
&+| E[f(\kappa_N(\cdot\wedge \tau^N_{r_m,R_m}))]-E[f(\bar{\kappa}_m(\cdot))]| + \\
&+|E[f(\bar{\kappa}_m(\cdot))]-E[f(\bar{\kappa}_m(\cdot\wedge\bar{\tau}_m))]|+\\
&+|E[f(\bar{\kappa}_m(\cdot\wedge\bar{\tau}_m))]-E[f(\kappa(\cdot\wedge\tau_m))]|+\\
&+|E[f(\kappa(\cdot\wedge\tau_m))-E[f(\kappa(\cdot))]|.
\end{align*}
By Proposition \ref{killedprocess}, the processes $\bar{\kappa}_m(t\wedge\bar{\tau}_m)$ and $\kappa(t\wedge\tau_m)$ have the same distribution, so 
\[
|E[f(\bar{\kappa}_m(\cdot\wedge\bar{\tau}_m))]-E[f(\kappa(\cdot\wedge\tau_m))]|=0
\]
and 
\begin{align*}
|E[f(\bar{\kappa}_m(\cdot))]-E[f(\bar{\kappa}_m(\cdot\wedge\bar{\tau}_m))]|+|E[f(\kappa(\cdot\wedge\tau_m))-E[f(\kappa(\cdot))]|&\leq 2||f||_{\infty}P(\tau_m\leq T).
\end{align*}
Notice also that 
\[
| E[f(\kappa_N(\cdot))]-E[f(\kappa_N(\cdot\wedge \tau^N_{r_m,R_m})]|\leq ||f||_{\infty}P(\tau_{r_m,R_m}^N\leq T),
\]
so we get, for any $m,N\geq 1$,
\begin{align*}
|E[f(\kappa_N(\cdot))]-E[f(\kappa(\cdot))|&\leq||f||_{\infty}P(\tau_{r_m,R_m}^N\leq T) + \\
&+| E[f(\kappa_N(\cdot\wedge \tau^N_{r_m,R_m}))]-E[f(\bar{\kappa}_m(\cdot))]| + \\
&+2||f||_{\infty}P(\tau_m\leq T).
\end{align*}
Fix $\varepsilon>0$: by Lemma \ref{existenceanduni} we have that there exists $m$ large enough such that 
$ P(\tau_m\leq T)<\varepsilon$. After having chosen such $m$, by \eqref{limsupstopping} and the convergence of 
 $\kappa_N(t\wedge \tau^N_{r_m,R_m})$ to $\bar{\kappa}_m(t)$, we can choose $N$ large enough such that
 $P(\tau_{r_m,R_m}^N\leq T)\leq 2\varepsilon$ and 
 \[
 | E[f(\kappa_N(\cdot\wedge \tau^N_{r_m,R_m}))]-E[f(\bar{\kappa}_m(\cdot))]|<\varepsilon.
 \]
So, for any $\varepsilon>0$, there exist $m$ and $N$ large enough such that
\[ 
|E[f(\kappa_N(\cdot))]-E[f(\kappa(\cdot))|\leq \varepsilon+4\varepsilon||f||_{\infty}
\]
which concludes the proof.
\end{proof}

\section{Sketch of proof of Theorem \ref{maintheorem2pop}}
As pointed out in the proof of Theorem \ref{mainthm}, we are looking for  a change of variables 
\begin{equation*}
 \left(\begin{array}{c}
w_N(t)\\v_N(t)
\end{array}\right)=C\left(\begin{array}{c}
x_N(t)\\y_N(t)
\end{array}\right)
\end{equation*}
where $C$ has to be such that $$CAC^{-1}=\left( \begin{array}{cc}
0 & -2\sqrt{|\Gamma|}\\2\sqrt{|\Gamma|} & 0
\end{array} \right).$$
It's easy to check that we can take 
$$C=\left( \begin{array}{cc}
0 & {1\over (1-\gamma)J_{21}}\\ -{1\over \sqrt{|\Gamma|}} & {(\gamma J_{11}-1)\over (1-\gamma)J_{21}\sqrt{|\Gamma|} }
\end{array} \right),$$
which provides 
\begin{equation}\label{WV2pop}
w_N(t)= {y_N(t)\over (1-\gamma)J_{21}}, \:\:\:\:\:\:\:\:\:\: v_N(t)= {1\over \sqrt{|\Gamma|}}\left(-x_N(t) + {(\gamma J_{11}-1)\over (1-\gamma)J_{21}} y_N(t)\right).
\end{equation}
Using the same tools of Subsection \ref{espansionegeneratori},  the infinitesimal generator $G_N$ of the process $(w_N(t),v_N(t))$ satisfies, for $f\in \mathcal{C}^3$, 
\begin{align*}
G_Nf(w,v)&= {2\over (1-\gamma) J_{21}^2} f_{ww} + {4(\gamma J_{11}-1)\over (1-\gamma)J_{21}^2 \sqrt{|\Gamma|}} f_{wv} + {2(\gamma(1-\gamma) J_{21}^2+(\gamma J_{11}-1)^2)\over (1-\gamma)J_{21}^2|\Gamma|}  f_{vv}+\\
&+{1\over (1-\gamma)J_{21}}\left( {(1-\gamma)\over 3}\mathcal{R}_2^3(x,y)-y\mathcal{R}^2_2(x,y)  \right)f_w+\\
&+\left({\gamma J_{11}-1\over (1-\gamma)J_{21}\sqrt{|\Gamma|} }\left( {(1-\gamma)\over 3}\mathcal{R}_2^3(x,y)-y\mathcal{R}^2_2(x,y)  \right)-{1\over \sqrt{|\Gamma|}}\left({ \gamma\over 3}\mathcal{R}_1^3(x,y)-x\mathcal{R}_1^2(x,y) \right)\right)f_v+\\
&+N^{1\over 2} \left(  -2\sqrt{|\Gamma|}vf_w+2\sqrt{|\Gamma|}wf_v  \right)+o(1),
\end{align*}
where $x=(\gamma J_{11}-1)w-\sqrt{|\Gamma|}v$, $y=(1-\gamma)J_{21}w$.
This means that for $0<r<R$ and $$\tau^N_{r,R}:=\inf\{t\in[0,T]|\kappa_N(t)\not\in ]r,R[\},$$ the stopped process
 $(\kappa_N(t\wedge\tau_{r,R}^N),\theta_N(t\wedge\tau_{r,R}^N))$ has an infinitesimal generator $H^{r,R}_N$ which, for $f\in \mathcal{C}^3$,   satisfies 
 $$
 Hf_N^{r,R} f(\kappa,\theta) = \mathcal{H}^{r,R}_f(\kappa,\theta) + N^{1\over 2} 2\sqrt{|\Gamma|} f_{\theta}(\kappa,\theta)+o_{r,R}(1)
 $$
 where $\mathcal{H}^{r,R}_f(\kappa,\theta)$ is composed by terms of order 1.
If we apply $\mathcal{H}^{r,R}$ to a function of the type $f(\kappa,\theta)=f(\kappa)$ we get $\mathcal{H}^{r,R}_f(\kappa)=\mathbbm{1}_{]r,R[}(\kappa) \mathcal{A}_f(\kappa,\theta)$ with
\begin{align*}
\mathcal{A}_f(\kappa,\theta)&=\left(  {8\over (1-\gamma)J^2_{21}}w^2+ {16(\gamma J_{11} -1)\over (1-\gamma)J^2_{21}\sqrt{|\Gamma|}}wv +{8 (\gamma(1-\gamma)J^2_{21} + (\gamma J_{11}-1)^2)\over (1-\gamma)J^2_{21}|\Gamma|}v^2\right)f''(\kappa)+\\
&+4 {|\Gamma|+\gamma(1-\gamma)J^2_{21} + (\gamma J_{11}-1)^2\over (1-\gamma)J^2_{21}|\Gamma|}f'(\kappa)+\\
&+\left({2w\over (1-\gamma)J_{21}} + {2(\gamma J_{11} -1)v\over (1-\gamma)J_{21} \sqrt{|\Gamma|}}\right)\left( {(1-\gamma)\over 3}\mathcal{R}_2^3(x,y)-y\mathcal{R}^2_2(x,y)  \right)f'(\kappa)   +\\
&-{2v\over \sqrt{|\Gamma|}}\left({ \gamma\over 3}\mathcal{R}_1^3(x,y)-x\mathcal{R}_1^2(x,y) \right)f'(\kappa),
\end{align*}
where $x=(\gamma J_{11}-1)w-\sqrt{|\Gamma|}v$, $y=(1-\gamma)J_{21}w$, $w=\sqrt{\kappa}\cos\theta$ and $v=\sqrt{\kappa}\sin\theta$.
Now we have to calculate $\bar{\mathcal{A}}_f(\kappa)={1\over 2\pi} \int_0^{2\pi}\mathcal{A}_f(\kappa,\theta)d\theta$: notice that the averaging will make any term with odd power in $w$ or $v$ disappear since 
$$\int_0^{2\pi} \cos^n\theta\sin^m\theta d\theta=0$$ if at least one between $n$ and $m$ is odd. Then, one gets
\begin{align*}
\bar{\mathcal{A}}_f(\kappa)&= 4Z_{1}(\gamma, J_{11}, J_{12}, J_{21}) \left(\kappa f''(\kappa) +f'(\kappa)\right)+{1\over 4}Z_2(\gamma,J_{11},J_{12},J_{21})\kappa^2f'(\kappa)
\end{align*}
where 
\begin{align*}
Z_{1}(\gamma, J_{11}, J_{12}, J_{21})= {|\Gamma|+\gamma(1-\gamma)J^2_{21} + (\gamma J_{11}-1)^2\over (1-\gamma)J^2_{21}|\Gamma|}
\end{align*}
and 
\begin{align*}
Z_2(\gamma,J_{11},J_{12},J_{21})&=-2 J_{11}^2 |\Gamma| - 2 J_{21}^2 + (\gamma J_{11}-1)|\Gamma|(J_{11}^2-J_{21}^2) + (\gamma J_{11} -1) J_{21}^2 \\
&+ (\gamma J_{11}-1) (J_{11}( \gamma J_{11}-1) + (1-\gamma) J_{12} J_{21} )^2 +\\&- J_{11}(\gamma J_{11}-1)(J_{11}( \gamma J_{11}-1) + (1-\gamma) J_{12} J_{21} ) .
\end{align*}
Then, as long as the parameters are chosen in a way that $Z_2(\gamma,J_{11},J_{12},J_{21})<0$, $\bar{\mathcal{A}}_f(\kappa)$ corresponds to the infinitesimal generator of the process $(\kappa(t))_{t\in[0,T]}$, unique solution of \eqref{limitfluttuazioni2pop}. The arguments to be used are the same of the proof of Theorem \ref{mainthm}:
we fix $r,R$ such that $0< r<4\gamma^2|\Gamma|^{-1}<R$ and we characterize the weak limit for the stopped process $(\kappa_N(t\wedge\tau_{r,R}^{N}))_{t\in[0,T]}$ using the averaging principle; then, as in Proposition \ref{proposizionefinale}, one proves that it also implies that $(\kappa_N(t))_{t\in [0,T]}$ converges, in sense of weak convergence of stochastic processes, to $(\kappa(t))_{t\in[0,T]}$.

\bibliographystyle{abbrv}

\end{document}